\documentclass[12pt,oneside]{amsart}

\usepackage{graphicx}
\usepackage{amsfonts}
\usepackage{epsf}
\usepackage{amssymb}
\usepackage{amsmath}
\usepackage{amscd}
\usepackage{tikz}
\usepackage{pdfpages}
\usepackage{fancyhdr}
\usepackage{setspace}
\usepackage{hyperref}
\usepackage[all]{xy}
\usetikzlibrary{matrix}
\usepackage{verbatim}
\usepackage{enumerate}
\usepackage{mathrsfs}
\usepackage{pinlabel}
\usepackage{multirow}

\usepackage{color}
\usepackage[margin=3cm,marginparwidth=2.5cm]{geometry}

\theoremstyle{theorem}
\newtheorem{theorem}{Theorem}[section]
\newtheorem{proposition}[theorem]{Proposition}
\newtheorem{lemma}[theorem]{Lemma}
\newtheorem{corollary}[theorem]{Corollary}

\newtheorem{claim}[theorem]{Claim}

\theoremstyle{definition}
\newtheorem{definition}[theorem]{Definition}
\newtheorem{question}[theorem]{Question}

\theoremstyle{remark}
\newtheorem{remark}[theorem]{Remark}

\newtheorem{example}[theorem]{Example}

\newcommand{\s}{\mathfrak{s}}
\renewcommand{\t}{\mathfrak{t}}

\newcommand{\Z}{\mathbb{Z}}

\newcommand{\CP}{\mathbb{CP}}

\newcommand{\spinc}{$\text{spin}^c$~}

\DeclareMathOperator{\rk}{rank}
\DeclareMathOperator{\coker}{coker}

\newcommand{\emb}{\varepsilon}

\makeatletter
\newtheorem*{rep@theorem}{\rep@title}
\newcommand{\newreptheorem}[2]{%
\newenvironment{rep#1}[1]{%
 \def\rep@title{#2 \ref{##1}}%
 \begin{rep@theorem}}%
 {\end{rep@theorem}}}
\makeatother

\newreptheorem{theorem}{Theorem}
\newreptheorem{lemma}{Lemma}
\newreptheorem{question}{Question}
\newreptheorem{corollary}{Corollary}

\topmargin = -.25in 
\textwidth = 6in
\textheight = 8.75in
\oddsidemargin = .25in
\evensidemargin = 0in

\setcounter{MaxMatrixCols}{25}

\title{Embedding 3-manifolds in spin 4-manifolds}
%\date{\today}
\author{Paolo Aceto \and Marco Golla \and Kyle Larson}
\address{Alfr\'ed R\'enyi Institute of Mathematics, Budapest, Hungary}
\email{aceto.paolo@renyi.mta.hu}
\address{Matematiska institutionen, Uppsala universitet, Uppsala, Sweden}%Box 480, 751 06,
\email{marco.golla@math.uu.se}
\address{Michigan State University, East Lansing, Michigan}
\email{larson@math.msu.edu}

\begin{document}

\rhead{\thepage}
\lhead{\author}
\thispagestyle{empty}

%\tableofcontents
%\listoffigures

%\newpage
\raggedbottom
\pagenumbering{arabic}
\setcounter{section}{0}

%%%%%%%%%%%%%%%%%%%%%%%%%%%%%%%%%%%%%%%%%%%%%%%%%%%%%%%%
%%%%%%%%%%%%%%%%%%%%%%%%%%%%%%%%%%%%%%%%%%%%%%%%%%%%%%%%
%%%%%%%%%%%%%%%%%%%%%%%%%%%%%%%%%%%%%%%%%%%%%%%%%%%%%%%%
\maketitle

\begin{abstract}
An invariant of orientable 3-manifolds is defined by taking the minimum $n$ such that a given 3-manifold embeds in the connected sum of $n$ copies of $S^2 \times S^2$, and we call this $n$ the embedding number of the 3-manifold. We give some general properties of this invariant, and make calculations for families of lens spaces and Brieskorn spheres. We show how to construct rational and integral homology spheres whose embedding numbers grow arbitrarily large, and which can be calculated exactly if we assume the 11/8-Conjecture. In a different direction we show that any simply connected 4-manifold can be split along a rational homology sphere into a positive definite piece and a negative definite piece.
\end{abstract}
\maketitle

%%%%%%%%%%%%%%%%%%%%%%%%%%%%%%%%%%%%%%%%%%%%%%%%%%%%%%%%
%%%%%%%%%%%%%%%%%%%%%%%%%%%%%%%%%%%%%%%%%%%%%%%%%%%%%%%%
%%%%%%%%%%%%%%%%%%%%%%%%%%%%%%%%%%%%%%%%%%%%%%%%%%%%%%%%
\begin{section}{Introduction}\label{intro}

It is natural to ask which 3-manifolds embed in $S^4$ (or, equivalently, in $\mathbb{R}^4$). Such a 3-manifold must necessarily be orientable, and it turns out that there are different answers depending on whether one requires the embeddings to be smooth or only topologically locally flat. Freedman~\cite{F} showed that every integral homology sphere embeds topologically locally flatly in $S^4$, while there are several obstructions to a homology sphere embedding smoothly.
An integral homology sphere embedded in $S^4$ splits $S^4$ into two integral homology 4-balls,
and so any obstruction to bounding a smooth integral homology ball gives an obstruction to a smooth embedding in $S^4$. The simplest such obstruction is the Rokhlin invariant, and so any integral homology sphere with nontrivial Rokhlin invariant (for example, the Poincar\'e sphere) does not admit a smooth embedding into $S^4$. Other obstructions include the correction terms of Heegaard Floer homology, and for the case of rational homology spheres there are simpler obstructions coming from the torsion linking form and indeed the order of the first homology (it must be a square).

On the constructive side, Casson and Harer~\cite{CH} gave several infinite families of Brieskorn homology spheres that smoothly embed in $S^4$ (see~\cite{BB}). More general classes of 3-manifolds that smoothly embed in $S^4$ include those that arise as cyclic branched covers of doubly slice knots (see~\cite{Gil-Liv},~\cite{Meier},~\cite{Don}) and homology spheres obtained by surgery on ribbon links~\cite{L}. For some specific classes of 3-manifolds it is known exactly which ones smoothly embed in $S^4$, for example circle bundles over closed surfaces~\cite{Crisp-Hillman} and connected sums of lens spaces~\cite{Don}.
Budney and Burton~\cite{BB} have examined this question from the perspective of the 11-tetrahedron census of triangulated 3-manifolds.

Unfortunately a complete answer to which 3-manifolds embed in $S^4$ remains out of reach. However, the question can be generalized by asking which 3-manifolds embed in some larger class of 4-manifolds (for the case of connected sums of $\CP^2$ see~\cite{EL}). Since it is known that every orientable 3-manifold smoothly embeds in a connected sum of $S^2 \times S^2$'s, the minimum $n$ such that a given 3-manifold $Y$ \emph{smoothly} embeds in $\#_n S^2 \times S^2$
%\mMarco{I would rather write $\#_n S^2 \times S^2$ or $nS^2\times S^2$, or give it a name like $\mathbb{S}_n$, since it ubiquitous.}
 is a well-defined invariant of $Y$, which we call the \emph{embedding number} of $Y$ and denote $\emb(Y)$. Hence the 3-manifolds that embed smoothly in $S^4$ are precisely those with embedding number equal to 0 (by convention the empty connected sum is $S^4$). Similar 3-manifold invariants, for example the surgery number (the minimal number of components of a link that admits a surgery to a given 3-manifold), are often notoriously difficult to compute. However, Kawauchi~\cite{Kaw} was able to produce infinite families of 3-manifolds whose embedding numbers grow arbitrarily large, and to compute it exactly for these manifolds (although he did not use this terminology). One drawback to his method is that it only works for 3-manifolds with non-zero $b_1$, and indeed for his examples the first Betti numbers are also unbounded. In this paper we focus on computing embedding numbers for integral and rational homology spheres.

Lens spaces are an interesting and instructive class of 3-manifolds to consider. It is known that no lens space embeds in $S^4$, but if the lens space $L(p,q)$ is punctured (that is, we remove an open ball) then it embeds in $S^4$ if and only if is $p$ is odd~\cite{Epstein,Zeeman}. For even $p$, the punctured lens space embeds in $S^2\times S^2$~\cite{EL}. Furthermore, Edmonds~\cite{Edm} showed that every lens space embeds \emph{topologically locally flatly} in $\#_4 S^2 \times S^2$. In contrast the smooth embedding numbers for lens spaces behave much differently, as we show in Section \ref{lens}. Indeed, for the family $L(n,1)$ the embedding numbers grow arbitrarily large (Proposition~\ref{bad lower bound}). We give upper and lower bounds for these embedding numbers, and give exact calculations for $n\leq19$; for even $n$ the embedding number is 1, and for odd $n$ the embedding numbers are listed in Figure~\ref{table}. As a tool we construct embeddings of $L(17,16)$ and $L(19,18)$ (and their associated canonical negative definite plumbings) into the $K3$ surface.
We also consider the question of which lens spaces have embedding number 1 (Theorem~\ref{lens space ball}), and for odd $p$ they are exactly those lens spaces that bound rational homology balls; such lens spaces were classified by Lisca~\cite{Lisca-ribbon}.

\begin{figure}
\begin{center}
\begin{tabular}{ |c|c|c|c|c|c|c|c|c|c| } 
\hline
$n$ & $3$& $5$& $7$& $9$& $11$& $13$& $15$& $17$& $19$\\
\hline
$\emb(L(n,1))$ & $2$& $4$& $6$& $8$& $10$& $10$& $8$& $6$& $4$ \\
\hline
\end{tabular}
\end{center}
\caption{Embedding numbers of $L(n,1)$, for odd $n\leq19$.}\label{table}
\end{figure}

In Section~\ref{general} we consider some general constructions and bounds. The most common technique we use to construct embeddings into $\#_n S^2 \times S^2$ is to realize the 3-manifold as surgery on an $n$-component, even-framed link (the double of the corresponding 4-manifold is $\#_n S^2\times S^2$, see Theorem \ref{all embed}), although we also use branched double cover arguments. In the other direction, most of our obstructions depend essentially on the fact that $\#_n S^2 \times S^2$ is a spin 4-manifold. Hence Rokhlin's Theorem and the 10/8-Theorem~\cite{Furuta} provide powerful tools.

Besides lens spaces, the other class of 3-manifolds we consider in depth are the Brieskorn homology spheres (in Section \ref{Brieskorn}). We give some general upper bounds on their embedding numbers, as well as some exact calculations for several infinite families where the embedding numbers are bounded. For example, each member of the family $\Sigma(2,3,6n+1)$ with $n$ odd has embedding number 10 (Proposition~\ref{Brieskorn_comp}). Work of Tange~\cite{Tange} allows us also to give families of Brieskorn spheres where the embedding numbers are unbounded, although we cannot give exact calculations.

Unfortunately, the task of giving exact calculations of arbitrarily large embedding numbers (in the case of integral or rational homology spheres) appears to be related to the gap between the 10/8-Theorem and the 11/8-Conjecture (recall the 11/8-Conjecture states that for a spin, closed 4-manifold $X$ the signature and second Betti number should be related by the inequality $b_2(X) \geq \frac{11}{8}|\sigma(X)|$, while Furuta~\cite{Furuta} proved that $b_2(X) \geq \frac{10}{8}|\sigma(X)| +2$). While the 10/8-Theorem is effective to show unboundedness of embedding numbers for many families of 3-manifolds, to give exact calculations it appears we must assume the validity of the 11/8-Conjecture (or else have counterexamples to the conjecture). In Section \ref{exact} we show how to do this by constructing integral and rational homology spheres that split connected sums of the $K3$ surface (these 4-manifolds lie on the 11/8-line) into definite pieces. In particular this method gives integral homology spheres that bound two negative definite spin 4-manifolds with different rank, answering a question of Tange \cite[Question 5.2]{Tange}. In fact our technique can be generalized using a structure theorem of Stong~\cite{Stong} to show that any simply connected 4-manifold can be decomposed into a positive definite 4-manifold and a negative definite 4-manifold (both simply connected), glued along a rational homology sphere (Theorem~\ref{definite splitting}). 

Finally, we point out that many of the techniques used in this paper are quite general, and can be applied to calculate embedding numbers for other classes of 3-manifolds than those explicitly addressed here, as well as to study embeddings of 3-manifolds into other spin 4-manifolds.

\subsection*{Acknowledgements} The authors would like to thank Bob Gompf, Ahmad Issa, D. Kotschick, Ana Lecuona, and Andr\'as Stipsicz for helpful conversations.
The first and third authors were partially supported by the ERC Advanced Grant LDTBud, and additionally the third author was partially supported by NSF grant DMS-1148490. The second author is supported by the Alice and Knut Wallenberg foundation.
\end{section}
%%%%%%%%%%%%%%%%%%%%%%%%%%%%%%%%%%%%%%%%%%%%%%%%%%%%%%%%
%%%%%%%%%%%%%%%%%%%%%%%%%%%%%%%%%%%%%%%%%%%%%%%%%%%%%%%%
\begin{section}{Preliminaries and general statements}\label{general}

Recall that the group ${\rm Spin}(n)$ is the double cover of $SO(n)$, which is also its universal cover as long as $n\ge 3$. A \emph{spin structure} on an $n$-manifold $M$ is a lift of the principal $SO(n)$-bundle associated to the tangent space $TM$ to a ${\rm Spin}(n)$-bundle over $M$.
A spin structure on $M$ exists if and only if $M$ is orientable and the second Stiefel--Whitney class of its tangent bundle vanishes, i.e. if and only if $w_1(M) = 0 $ and $w_2(M) = 0$; moreover, spin structures on $M$ are an affine space over $H^1(M;\mathbb{Z}/2\mathbb{Z})$.

Since every orientable 3-manifold $Y$ is parallelizable, the Stiefel--Whitney classes of its tangent bundle vanish, hence $Y$ always admits a spin structure. Moreover, if $H^1(Y;\mathbb{Z}/2\mathbb{Z}) = 0$, it is unique. This happens, for instance, when $Y$ is a rational homology sphere whose $H_1$ has odd order.

Now 4-manifolds, on the other hand, do not always admit spin structures. In fact, a closed, simply connected 4-manifold $X$ admits a spin structure if and only if it has an even intersection form.

Spin structures behave well with respect to gluing: if $(X_1, \s_1)$ and $(X_2, \s_2)$ are two spin 4-manifold with boundary $\partial X_i = Y$, then $X_1\cup (-X_2)$ admits a spin structure provided the restrictions $\s_1|_Y$ and $\s_2|_Y$ agree.
This is for free when $Y$ is a rational homology sphere whose $H_1$ has odd order.

Throughout we will assume all manifolds and maps to be smooth, and in addition we require that all manifolds be oriented. %Unless otherwise specified, the homology is taken with integer coefficients.
The following theorem is well-known (see~\cite[Section 5.7]{GS}).

\begin{theorem}\label{all embed}
Every $3$-manifold embeds in $\#_n S^2 \times S^2$ for some $n$. More precisely, every closed $3$-manifold can be realized as integral surgery on a link in $S^3$ where all the surgery coefficients are even.
If there are $n$ components in such a link, then this surgery description gives an embedding into  $\#_n S^2 \times S^2$.
\end{theorem}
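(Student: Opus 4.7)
The plan is to establish the two assertions of the theorem separately: first, that every closed orientable 3-manifold $Y$ admits an integer surgery description with all framings even; second, that given an even-framed surgery link $L$ with $n$ components describing $Y$, the construction itself produces a smooth embedding $Y \hookrightarrow \#_n S^2 \times S^2$.

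For the first assertion, I would begin with any integer surgery presentation of $Y$ (which exists by Lickorish--Wallace) and then modify it via Kirby calculus until all framings are even. The standard trick (essentially Kaplan's) is to blow up by an unlinked $\pm 1$-framed unknot---this changes the ambient 4-manifold $W_L$ but not $\partial W_L = Y$---and then handle-slide any odd-framed component over the $\pm 1$-framed unknot, which changes its framing by $\pm 1$ and thus flips its parity. Iterating carefully (using a characteristic-sublink bookkeeping to ensure the process terminates with all framings even rather than leaving an odd residue behind) produces a new integer surgery diagram, still describing $Y$, in which every framing is even.

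For the second assertion, let $W_L$ be the 4-manifold obtained from $B^4$ by attaching 2-handles along $L$ with the given even framings, so $\partial W_L = Y$; since the linking matrix is even, $W_L$ is spin. The double $DW_L = W_L \cup_Y (-W_L)$ is then a closed spin 4-manifold containing $Y$ as a separating hypersurface, and I would claim $DW_L$ is diffeomorphic to $\#_n S^2 \times S^2$. The upside-down handle decomposition of $-W_L$ contributes $n$ dual 2-handles attached along $0$-framed meridians $\mu_i$ of the components $K_i$ of $L$, plus a single 4-handle; hence $DW_L$ has a Kirby diagram consisting of $L$ (with its even framings) together with a $0$-framed meridian $\mu_i$ for each $K_i$. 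Now I would simplify this diagram in three successive passes. First, since $\mathrm{fr}(\mu_i)=0$ and $\mathrm{lk}(K_i,\mu_i)=1$, sliding $K_i$ over $\mu_i$ changes $\mathrm{fr}(K_i)$ by $\pm 2$ while touching nothing else, so the evenness of the framings lets me zero out $\mathrm{fr}(K_i)$ for every $i$. Second, sliding $K_i$ over $\mu_j$ for $j\neq i$ changes $\mathrm{lk}(K_i,K_j)$ by $\pm 1$ and preserves all other framings and linkings, so I can zero out the off-diagonal linkings. Third, the $0$-framed meridian $\mu_i$ can be used to undo crossings of $K_i$ with itself (the standard trick: slide strands of $K_i$ around $\mu_i$), unknotting $K_i$ within its pair. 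The simplified diagram is $n$ disjoint $0$-framed Hopf links together with a 4-handle, which is the standard Kirby diagram for $\#_n S^2 \times S^2$.

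The main obstacle is the first assertion: arranging Kirby moves so that the resulting diagram has every framing even---and in particular no lingering odd-framed auxiliary unknots introduced by blow-ups---requires a careful combinatorial argument. Once the even-framed diagram is in hand, the identification of the double with $\#_n S^2 \times S^2$, and hence the embedding of $Y$, follows from the routine handle-slide calculations sketched above.
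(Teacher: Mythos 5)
Your proposal follows essentially the same route as the paper's own (sketched) proof: the paper cites Kaplan for the existence of an even-framed integral surgery presentation and then doubles the resulting spin 2-handlebody, identifying the double with $\#_n S^2 \times S^2$ by sliding over the $0$-framed dual meridians (citing Gompf--Stipsicz for the handle slides). Your blow-up/characteristic-sublink discussion of the first assertion and your explicit three-pass simplification of the doubled diagram are just more detailed expansions of exactly these two steps, so the proposal is correct and matches the paper's approach.
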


\begin{proof}[Sketch of proof]
Let $Y$ be a closed 3-manifold (if a 3-manifold has boundary we can double it to obtain a closed 3-manifold, and then embed the double by the following argument).
Kaplan~\cite{Kap} gives an algorithm to realize $Y$ as integral surgery on an $n$-component link $L$ (for some $n$) where all the coefficients are even.
From this description $Y$ is realized as the boundary
of a spin 4-manifold $X$ obtained by attaching $n$ 2-handles to $B^4$ along $L$ with even framings (the intersection form is even, and hence $X$ is spin since it is simply connected). To obtain a handle decomposition for the double $\mathcal{D}X$ of $X$ we add $n$
additional 2-handles, each attached along a 0-framed meridian of a component of $L$, and then a 4-handle.
Then repeatedly sliding over these 0-framed meridians results in $n$ 0-framed Hopf pairs (see Figure \ref{f:hopf}),
which shows that $\mathcal{D}X$ is diffeomorphic to $\#_n S^2 \times S^2$ (see~\cite[Corollary 5.1.6]{GS}  for more details of the necessary handle slides).
\end{proof}

\begin{figure}[h]
\labellist
\pinlabel $0$ at -3 0
\pinlabel $0$ at 128 0
\pinlabel $0$ at 313 0
\pinlabel $0$ at 182 0
\endlabellist
\centering
\includegraphics[scale=0.8]{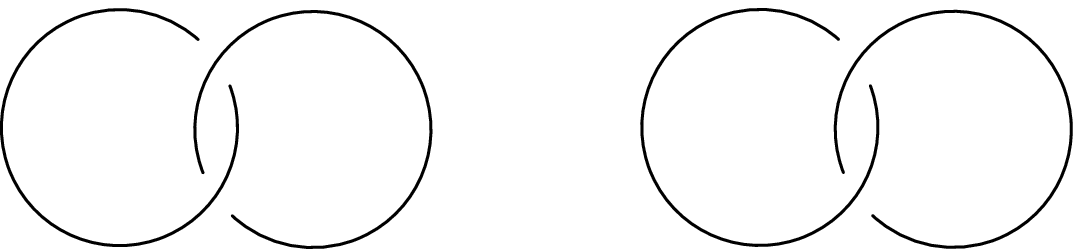}
\caption{Two 0-framed Hopf pairs and a 4-handle gives $\#_2 S^2 \times S^2$.}\label{f:hopf}
\end{figure}

Therefore the following is a well-defined invariant of 3-manifolds.

\begin{definition}
Given a 3-manifold $M$, let  $\emb(M)$ be the minimum $n$ such that $M$ embeds in $\#_n S^2 \times S^2$. Call $\emb(M)$ the \emph{embedding number} of $M$.
\end{definition}

For example, $M$ embeds in $S^4$ if and only if $\emb(M)=0$. Now we consider some general properties of this invariant.

\begin{proposition}\label{p:generalproperties}
Let $M$ and $N$ be $3$-manifolds, and let $\overline{M}$ denote $M$ with the opposite orientation. Then the embedding number satisfies the following properties:
\begin{enumerate}
\item $\emb(M) = \emb(\overline M)$;
\item $\emb(M \# N) \leq \emb(M) + \emb(N)$;
\item $\emb(M \# \overline{M}) \leq \emb(M)$.
\end{enumerate}
\end{proposition}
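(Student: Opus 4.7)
I will address the three claims in order, each by a short geometric construction.

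For (1), the strategy is to observe that $\#_n S^2 \times S^2$ admits an orientation-reversing self-diffeomorphism: for instance, reflecting the first $S^2$-factor of each summand. Post-composing a given embedding $M \hookrightarrow \#_n S^2 \times S^2$ with this self-diffeomorphism produces an embedding of $\overline M$, so $\emb(\overline M) \le \emb(M)$; the symmetric argument gives the reverse inequality.

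For (2), let $m = \emb(M)$ and $n = \emb(N)$, and fix embeddings $M \hookrightarrow X := \#_m S^2 \times S^2$ and $N \hookrightarrow Y := \#_n S^2 \times S^2$. The idea is to form the connect sum $X \# Y$ at points of $M$ and of $N$. Concretely, choose $p \in M$ and $q \in N$, and small 4-balls $B_p \subset X$, $B_q \subset Y$ centered at them so that $b_p := B_p \cap M$ and $b_q := B_q \cap N$ are 3-balls. Then build $X \# Y$ by gluing $\partial B_p$ to $\partial B_q$ via an orientation-reversing diffeomorphism restricting to an identification $\partial b_p \to \partial b_q$. The result is diffeomorphic to $\#_{m+n} S^2 \times S^2$, and it contains $(M \setminus \mathrm{int}\, b_p) \cup (N \setminus \mathrm{int}\, b_q) = M \# N$.

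For (3), given an embedding $M \hookrightarrow X := \#_n S^2 \times S^2$, puncture $M$ by removing the interior of a small 3-ball $B \subset M$ and set $M_0 := M \setminus \mathrm{int}\, B$, a compact submanifold-with-boundary of $X$ with $\partial M_0 \cong S^2$. Because $M_0$ and $X$ are both oriented, the normal bundle of $M_0$ in $X$ is a trivial line bundle, so a closed tubular neighborhood of $M_0$ is diffeomorphic to $M_0 \times [-1,1]$. After smoothing corners, its boundary decomposes as the union of $M_0 \times \{+1\}$, $M_0 \times \{-1\}$, and the cylinder $\partial M_0 \times [-1,1] \cong S^2 \times [-1,1]$, glued along the two boundary spheres. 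The outward-normal-first convention gives the two copies of $M_0$ opposite induced orientations, so this closed 3-manifold is precisely $M \# \overline M$, now embedded in $X$.

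The essential content is in (3): the key observation is that the boundary of a tubular neighborhood of a punctured copy of $M$ inside $X$ is already a copy of $M \# \overline M$. The only part that requires care is the orientation bookkeeping on the two faces of $M_0 \times [-1,1]$, which is what forces an $\overline M$ rather than a second $M$ to appear; once this is unpacked, (1) and (2) are essentially immediate.
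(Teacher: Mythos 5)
Your proposal is correct and follows essentially the same route as the paper: part (3) is exactly the paper's argument (the boundary of a collar $M^\circ\times I$ of the punctured manifold is $M\#\overline M$), and part (2) is the same ambient connected-sum idea (the paper embeds $M$ and $N$ disjointly in $\#_{m+n}S^2\times S^2$ and tubes them along an arc, which amounts to your pairwise sum; like the paper's parenthetical remark, you should note that the identification $\partial b_p\to\partial b_q$ can and must be chosen so that the oriented result is $M\# N$ rather than $M\#\overline N$, which is possible since both choices of gluing still yield $\#_{m+n}S^2\times S^2$). For (1) your orientation-reversing self-diffeomorphism of the ambient manifold does no work and is unnecessary: since no orientation compatibility is imposed on an embedding of a 3-manifold in a 4-manifold, any embedding of $M$ is literally an embedding of $\overline M$, which is all the paper observes.
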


\begin{proof}
Point (1) is obvious, since every embedding of $M$ is also an embedding of $\overline M$.
Now we prove (2). Let $m=\emb(M)$, and $n=\emb(N)$. Then $M$ embeds in $\#_m S^2 \times S^2$ and $N$ embeds in $\#_n S^2 \times S^2$, so the disjoint union $M \sqcup N$ embeds in $\#_{m+n} S^2 \times S^2$. Since $M$ and $N$ embed disjointly in $\#_{m+n} S^2 \times S^2$, their connected sum also embeds.
Just perform ambient surgery along an embedded arc $\gamma$ in $\#_{m+n} S^2 \times S^2$ with one endpoint on $M$ and the other endpoint on $N$, such that the interior of the arc misses $M$ and $N$ (notice that in order to arrange the correct orientations we may have to change which connected component of $\#_m S^2 \times S^2\setminus M$ and $\#_n S^2 \times S^2\setminus N$ we use for the connected sum).
Therefore $\emb(M \# N) \leq m+n = \emb(M) + \emb(N)$.
%Notice that here one must be careful as to how the connected sum of $\#_m S^2 \times S^2$ and $\#_n S^2 \times S^2$ is performed: indeed, ambient connected sum along $\gamma$ might not preserve the orientation of $N$ (or $M$), and result in $M\# \overline N$ instead.
%This can either be fixed by using point (1) above, or by performing the connected sum of $\#_m S^2 \times S^2$ and $\#_n S^2 \times S^2$ using the other component of $\#_n S^2 \times S^2\setminus N$.

Now we prove (3). Let $M^\circ$ denote $M$ with an open $B^3$ removed. If $M$ embeds in $\#_n S^2 \times S^2$, then obviously so does $M^\circ$. The boundary of a collar neighborhood $M^\circ \times I$ of $M^\circ$ is $M \# \overline{M}$, and so $M \# \overline{M}$ embeds in $\#_n S^2 \times S^2$ as well, finishing the proof.
\end{proof}

The next few results explore the relationship between small embedding numbers and bounding an integral or rational homology ball. Here and in the following, $H_*(\cdot)$ will always denote homology with integer coefficients, unless otherwise stated.

\begin{proposition}\label{Zspheres}
Let $M$ be an integral homology sphere. If $\emb(M) \leq 1$ then $M$ bounds an integral homology ball.
\end{proposition}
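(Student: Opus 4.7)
The plan is to handle $\emb(M) = 0$ and $\emb(M) = 1$ separately. If $\emb(M) = 0$ then $M$ embeds in $S^4$, where it separates into two pieces. A Mayer--Vietoris calculation (or equivalently Alexander duality) immediately shows that both pieces are integral homology $4$-balls, and in particular $M$ bounds one.

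Suppose instead $\emb(M) = 1$ and fix an embedding $M \hookrightarrow S^2 \times S^2$. Since $H_3(S^2 \times S^2) = 0$, the submanifold $M$ is null-homologous and hence separates $S^2 \times S^2$ into two pieces $X_1$ and $X_2$ with $\partial X_i = M$. Running Mayer--Vietoris for this decomposition, using $H_*(M) = H_*(S^3)$ and the known homology of $S^2 \times S^2$, yields $H_1(X_i) = 0$ and $H_3(X_i) = 0$ for both $i$, together with an isomorphism $H_2(X_1) \oplus H_2(X_2) \xrightarrow{\sim} H_2(S^2 \times S^2) \cong \mathbb{Z}^2$. In particular each $H_2(X_i)$ is free of some rank $r_i$ with $r_1 + r_2 = 2$.

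The key step is to pin down $r_1$ and $r_2$ using the intersection form. Classes in $H_2(X_1)$ and $H_2(X_2)$ can be represented by cycles supported in disjoint open sets of $S^2 \times S^2$, so their intersection number vanishes; hence the above isomorphism is an orthogonal splitting of the hyperbolic form, $H \cong Q_{X_1} \oplus Q_{X_2}$. Because $M$ is a $\mathbb{Z}$-homology sphere, the long exact sequence of the pair $(X_i, M)$ combined with Poincar\'e--Lefschetz duality forces each form $Q_{X_i}$ to be unimodular. The remaining step is purely algebraic: $H$ is an even rank-two unimodular form, while any orthogonal sum $\langle \pm 1 \rangle \oplus \langle \pm 1 \rangle$ is odd, so the case $(r_1, r_2) = (1,1)$ is excluded. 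Therefore one side, say $X_1$, has $H_2(X_1) = 0$; combined with the previously established vanishing of $H_1(X_1)$ and $H_3(X_1)$, this makes $X_1$ the desired integral homology ball bounding $M$.

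The main point requiring care will be verifying the orthogonality and the unimodularity of the splitting of the intersection form; once these are in hand, the parity obstruction dispatches the one remaining case and the conclusion follows immediately.
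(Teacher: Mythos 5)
Your proof is correct and follows essentially the same route as the paper: embed $M$ in $S^2\times S^2$, split along $M$, use Mayer--Vietoris to get an orthogonal splitting of the hyperbolic form into unimodular summands, and rule out the rank $(1,1)$ case by parity, so one piece is an integral homology ball. The only cosmetic difference is that you derive the parity contradiction directly from the evenness of $H$ (a sum $\langle\pm1\rangle\oplus\langle\pm1\rangle$ is odd), whereas the paper notes the pieces are spin and hence have even forms; the treatment of $\emb(M)=0$ as a separate case is also unnecessary but harmless.
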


\begin{proof}
If $\emb(M) \leq 1$ then $M$ embeds in $S^2 \times S^2$. Let $X_1, X_2$ be the closures of the two connected components of $S^2\times S^2\setminus M$, so that $S^2 \times S^2 = X_1 \cup_M X_2$. Notice that $X_1$ and $X_2$ are spin 4-manifolds since they are codimension-0 submanifolds of the spin manifold $S^2\times S^2$.
Since $M$ is an integral homology sphere,
the Mayer--Vietoris sequence implies that $H_1(X_1)=H_1(X_2)=0$, and furthermore, we get a splitting $H := Q_{S^2 \times S^2}\cong Q_{X_1} \oplus Q_{X_2}$ using the unimodular intersection forms $Q_{X_1}$ and $Q_{X_2}$. But this implies that one of $Q_{X_1}$ or $Q_{X_2}$ is trivial
(since the forms must be even, and $H$ is the only nontrivial even, unimodular form of rank less than 8), say $Q_{X_1}$, and so $X_1$ must be an integral homology ball.
\end{proof}

Note that we do not know of any obstruction that can distinguish between an integral homology sphere embedding in $S^2 \times S^2$ and embedding in $S^4$.
Hence it is possible, although it seems unlikely, that every integral homology sphere that embeds in $S^2 \times S^2$ also embeds in $S^4$.

We now give a generalization of Proposition~\ref{Zspheres}.

\begin{theorem}\label{t:oddH1}
Let $Y$ be a rational homology sphere such that $H_1(Y)$ has odd order. If $\emb(Y) \leq 1$, then $Y$ bounds a spin rational homology ball.
\end{theorem}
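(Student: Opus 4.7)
The plan is to realize $Y$ as a separating hypersurface in the ambient 4-manifold $Z = \#_{\emb(Y)} S^2 \times S^2$ and to analyze the two complementary pieces via Mayer--Vietoris. Since $Y$ is connected and $H_3(Z) = 0$, the class $[Y] \in H_3(Z)$ vanishes, so $Y$ separates: $Z = X_1 \cup_Y X_2$, and both $X_i$ inherit spin structures from $Z$ by restriction. Using $H_2(Y) = 0$ and $H_1(Z) = 0$, the Mayer--Vietoris sequence reduces to
\begin{equation*}
0 \to H_2(X_1) \oplus H_2(X_2) \to H_2(Z) \to H_1(Y) \to H_1(X_1) \oplus H_1(X_2) \to 0.
\end{equation*}
In particular each $H_1(X_i)$ is a subquotient of the finite odd-order group $H_1(Y)$, hence finite of odd order, and $\rk H_2(X_1) + \rk H_2(X_2) = \rk H_2(Z)$.

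If $\emb(Y) = 0$ then $H_2(Z) = 0$, so $H_2(X_i) = 0$ for both $i$ and both pieces are spin rational homology balls. For $\emb(Y) = 1$, the intersection form of $Z = S^2 \times S^2$ is the hyperbolic form $H$, and the rank identity reads $\rk H_2(X_1) + \rk H_2(X_2) = 2$. If one of these ranks is $0$, that $X_i$ is the desired spin rational homology ball by the same argument as above. The main obstacle is to rule out the remaining possibility, namely that both ranks equal $1$; this is the genuine content of the proposition.

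To do so, I would pick generators $v_i$ of $H_2(X_i) \cong \mathbb{Z}$ and view them as elements of $L = H_2(Z) \cong \mathbb{Z}^2$. Representatives of $v_1$ and $v_2$ can be arranged to sit in the interiors of $X_1$ and $X_2$ respectively, so $v_1 \cdot v_2 = 0$; since the form on $L$ is even, the self-intersections $a_i = v_i \cdot v_i$ are also even. Thus the sublattice $L' = \langle v_1, v_2 \rangle$ has diagonal Gram matrix, and the standard formula $\det(\text{Gram}(L')) = [L:L']^2 \det(\text{Gram}(L))$ combined with $\det H = -1$ gives $[L:L']^2 = |a_1 a_2|$, which is a positive multiple of $4$, so $[L:L']$ is even. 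But the Mayer--Vietoris sequence identifies $L/L'$ with a subgroup of $H_1(Y)$, and $|H_1(Y)|$ is odd by hypothesis, forcing $[L:L']$ to be odd. This parity contradiction excludes the bad case, so the dichotomy collapses to the case in which some $X_i$ is a spin rational homology ball.
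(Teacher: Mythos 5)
Your argument is correct, and while it shares the same skeleton as the paper's proof (split $\#_{\emb(Y)}S^2\times S^2 = X_1\cup_Y X_2$, run Mayer--Vietoris, reduce to ruling out the case $\rk H_2(X_1)=\rk H_2(X_2)=1$, and derive a parity contradiction with the odd order of $H_1(Y)$), the key step is genuinely different. The paper stays inside a single piece: it uses the long exact sequence of the pair $(X_i,Y)$, Poincar\'e--Lefschetz duality $H_2(X_i,Y)\cong H^2(X_i)$, and the evenness of $Q_{X_i}$ (spin) to exhibit an element of order $2$ in $\coker\bigl(H_2(X_i)\to H_2(X_i,Y)\bigr)$, which injects into $H_1(Y)$. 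You instead work in the ambient lattice $L=H_2(S^2\times S^2)$: disjoint supports give $v_1\cdot v_2=0$, evenness of $H$ gives even diagonal entries, and the discriminant--index formula $|a_1a_2|=[L:L']^2\cdot|\det H|$ forces $[L:L']$ to be even, while Mayer--Vietoris identifies $L/L'$ with a subgroup of the odd-order group $H_1(Y)$. Your route is shorter and avoids relative homology, duality for manifolds with boundary, and the torsion bookkeeping in $H_2(X_i,Y)$ (note that $|a_1a_2|=[L:L']^2\ge 1$ already guarantees both $a_i\neq 0$, so the ``positive multiple of $4$'' step is airtight); it also generalizes naturally to embeddings into any closed spin $4$-manifold, where it says the image lattice $Q_{X_1}\oplus Q_{X_2}$ has odd index in the ambient even unimodular lattice. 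The paper's intrinsic argument, by contrast, proves something about a single spin filling (a rational homology sphere with odd $H_1$ has no spin filling with $H_2\cong\Z$), a viewpoint that the paper later strengthens in Proposition~\ref{p:spin-link} via branched covers. Two small points you compress, both standard and harmless: the separation of $Z$ by $Y$ is justified by the loop-intersection argument (a non-separating connected hypersurface pairs nontrivially with a loop, so it cannot be null-homologous), and the conclusion that a piece with $b_1=b_2=0$ and rational homology sphere boundary is a rational homology ball also needs $b_3=0$, which follows from duality.
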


\begin{proof}
Suppose $Y$ embeds in $S^2\times S^2$, splitting $S^2\times S^2$ into two spin connected components $X_1$ and $X_2$. % be the closures of two connected components of $S^2\times S^2\setminus Y$. Notice that $X_1$ and $X_2$ are spin 4-manifolds since they are codimension-0 submanifolds of the spin manifold $S^2\times S^2$.
Assume by contradiction that $Y$ does not bound a rational homology ball.

The Mayer--Vietoris long exact sequence reads:
\[
0 \to H_2(X_1)\oplus H_2(X_2) \to H_2(S^2\times S^2) \to H_1(Y) \to H_1(X_1)\oplus H_1(X_2) \to 0.
\]
In particular, since $H_1(Y)$ is finite, $H_2(X_1)$ and $H_2(X_2)$ are two free groups, the sum of whose ranks is $2$, and if $Y$ does not bound a rational homology ball,
then $\rk H_2(X_1) = \rk H_2(X_2) = 1$. Hence both groups are isomorphic to $\Z$.
Moreover, since $|H_1(Y)|$ is odd and it surjects onto $H_1(X_1)$ and $H_1(X_2)$, these two groups have odd order, too.

The long exact sequence of the pair for $(X_i, Y)$ reads:
\[
0 \longrightarrow H_2(X_i) \stackrel{\phi}{\longrightarrow} H_2(X_i,Y) \longrightarrow H_1(Y) \longrightarrow H_1(X_i) \longrightarrow 0,
\]
where the fact that $H_1(X_i,Y)$ vanishes follows from the surjectivity of $H_1(Y)\to H_1(X_i)$, observed above.

Now $H_2(X_i,Y)$ may have torsion, since by the universal coefficient theorem and Poincar\'e--Lefschetz duality $H_2(X_i,Y) \cong H^2(X_i) \cong H_2(X_i)\oplus H_1(X_i)$.
%Let $\phi$ denote the non-torsion part of the map $H_2(X_i) \to H_2(X_i,Y)$, i.e., the map $H_2(X_i)\to H_2(X_i)$ using the previous isomorphisms.
%Then $\phi$ has rank 1 and is determined by the intersection form $Q_{X_i}$.
%More precisely, if $\alpha$ is a generator of $H_2(X_i)$, and $\beta$ is the Poincar\'e dual of an element in $H^2(X_i)$ that evaluates to 1 on $\alpha$ and has trivial torsion part under the above isomorphism, then $\beta \cdot \alpha =1$ and $\phi(\alpha) = l \beta$ for some $l$. But $\alpha \cdot \alpha = \phi(\alpha)\cdot\alpha = l \beta \cdot \alpha = l$, and so $l=2k$ must be an even number since $Q_{X_i}$ is an even intersection form (because $X_i$ is spin).
Let $\alpha$ be a generator of $H_2(X_i)$, and $\beta$ be the Poincar\'e dual of an element in $H^2(X_i)$ that evaluates to 1 on $\alpha$; %and has trivial torsion part under the above isomorphism,
then $\beta \cdot \alpha =1$ and $\phi(\alpha) = \ell \beta + t$ for some $\ell$ and some torsion element $t\in H_2(X_i,Y)$. But $\alpha \cdot \alpha = \phi(\alpha)\cdot\alpha = \ell \beta \cdot \alpha = \ell$, and so $\ell=2k$ must be an even number, since $Q_{X_i}$ is an even intersection form (because $X_i$ is spin).

%Now, $H_2(X_i) = \Z$, and therefore we can write
%\[
%\xymatrix{
%H_2(X_i)\ar[r]\ar[d]_{\cong} & H_2(X_i,Y)\ar[d]^{\cong}\\
%\Z\ar[r]^{\beta} & \Z\oplus T
%}
%\]
%where $\beta: 1\mapsto (2k, t)$, and $t\in T$ is either 0 or an element of \emph{odd} order $d$. If $t$ has odd order $d$, then the element $\bar x = (dk,0)$ is \emph{not} in the image of $\beta$, while $2\bar x = \beta(d)$ is. If $t=0$, then let $\bar x = (k,0)$. In either case, $\bar x$ is a nonzero element in $\coker \beta$ such that $2\bar x = 0$. But this contradicts the fact that $\coker\beta$ is a subgroup of $H_1(Y)$, which has odd order by assumption.
Since the torsion in $H_2(X_i,Y)$ has odd order, the order of $t$ is an odd number $d$ (if $t=0$, $d=1$).
It is easy to see that the element $\bar x = dk\beta$ is \emph{not} in the image of $\phi$, while $2\bar x = \phi(d\alpha)$ is;
that is, $\bar x$ is a nonzero element in $\coker \phi$ such that $2\bar x = 0$.
But this contradicts the fact that $\coker\phi$ is a subgroup of $H_1(Y)$, which has odd order by assumption.
%
%In either case, it is easy to check that the element $\bar x = (dk,0)$ is \emph{not} in the image of $\beta$, while $2\bar x = \beta(d)$ is.
%That is, $\bar x$ is a nonzero element in $\coker \beta$ such that $2\bar x = 0$.
\end{proof}

This next theorem is a partial converse to Theorem \ref{t:oddH1}, and both of these theorems will be crucial in understanding which lens spaces $L(p,q)$ with odd $p$ have embedding number 1 (see Section \ref{lens}).

\begin{figure}[h]
\labellist
\pinlabel $m$ at 141 140
%\pinlabel $-2$ at 93 40
\endlabellist
\centering
\includegraphics[scale=1.0]{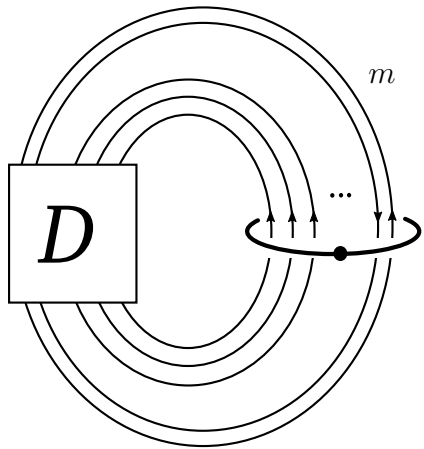}
\caption{A rational homology ball with a single 1-handle and a single 2-handle.}\label{f:rational ball}
\end{figure}

\begin{theorem}\label{t:one1-handle}
Let $Y$ be a rational homology sphere such that $H_1(Y)$ has odd order. If $Y$ bounds a rational homology ball with only a single $1$-handle and a single $2$-handle, then $\emb(Y) \leq 1$.
\end{theorem}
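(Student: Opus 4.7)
The plan is to attach one more 2-handle $V$ to $W$ so that the enlarged 4-manifold $\hat W := W \cup V$ becomes diffeomorphic to $B^4$ with a single even-framed 2-handle attached. By Theorem~\ref{all embed} (applied with a single component) the double of $\hat W$ will then be $S^2 \times S^2$, giving an embedding $\hat W \hookrightarrow S^2\times S^2$. Since $Y = \partial W$ lies in the interior of $\hat W$ (separated from $\partial \hat W$ by the new 2-handle $V$), this will yield $\emb(Y) \le 1$.

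In more detail, I would first draw $W$ as in Figure~\ref{f:rational ball}: a dotted unknot $U$ for the 1-handle together with a knot $K$ with framing $m$ for the 2-handle, where $K$ links $U$ algebraically $n$ times. The handle chain complex gives $H_1(W) \cong \mathbb{Z}/n\mathbb{Z}$, and combined with the standard identity $|H_1(Y)| = |H_1(W)|^2$ for rational homology balls, the hypothesis on $H_1(Y)$ forces $n$ to be odd. Next, I would attach a new 2-handle $V$ along a small meridian of $U$ with framing $f_V \in \mathbb{Z}$ still to be chosen; then $U$ and $V$ form a 1-2 cancelling pair, so $\hat W$ is (after cancellation) of the form $B^4$ with a single 2-handle attached along some knot in $S^3$.

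The key step is to compute the framing of this remaining 2-handle and to choose $f_V$ so that it is even. My plan is to slide $K$ over $V$ exactly $n$ times, each time in the direction that lowers the linking with $U$ by one, tracking both the framing of $K$ and its linking with $V$ via the handle-slide formula $\mathrm{fr}(a\pm b) = \mathrm{fr}(a)+\mathrm{fr}(b)\pm 2\,\mathrm{lk}(a,b)$. A short recursion shows that after $n$ such slides the new attaching curve $K_n$ has linking $0$ with $U$ and framing $m + n^2 f_V$ in $S^3$. Since $n$ is odd, $n^2\equiv 1\pmod 2$, so the parity of this new framing equals that of $m+f_V$; hence choosing $f_V$ with the same parity as $m$ yields an even framing, and cancellation of the $U$-$V$ pair produces the claimed $\hat W$.

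The main technical obstacle is simply the careful bookkeeping of the handle-slide formula through $n$ successive slides. It is precisely in the parity step that the hypothesis $|H_1(Y)|$ odd enters essentially: if $n$ were even, $n^2 f_V$ would always be even and the parity of $m$ could not be altered by slides of this kind, so this approach would fail.
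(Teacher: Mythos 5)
Your argument is correct and is essentially the paper's proof in repackaged form: the paper likewise attaches a meridional 2-handle to the dotted circle with framing $0$ or $1$ according to the parity of $m$ (together with a $0$-framed meridian of the attaching circle, used to carry out the slides and to close up), uses the oddness of the linking number --- forced by $|H_1(Y)|$ being odd --- to make the surviving 2-handle even-framed, and then recognizes the resulting closed manifold as $S^2\times S^2$ containing the rational ball. Two small touch-ups: $Y=\partial W$ does \emph{not} lie in the interior of $\hat W$ (away from the attaching region it remains in $\partial\hat W$), but it does embed in $\hat W$, hence in the double of $\hat W\cong S^2\times S^2$, which is all you need; and your framing count $m+n^2 f_V$ tacitly assumes the strands through the 1-handle are coherently oriented --- in general the slides (or, more quickly, computing the intersection form of $\hat W$) give $m+\ell^2 f_V$ with $\ell$ the odd algebraic linking number, so the parity argument is unaffected.
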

%\mMarco{Indeed, this applies also to all 3-manifolds in Casson--Harer! They are all built with a single 1-handle and a single 2-handle, as far as I understood. (This includes, in particular, $\Sigma(2,3,13)$ and $\Sigma(2,3,25)$. Paolo, can you check this?}
%\mKyle{In fact the Casson-Harer examples all embed in $S^4$~\cite{BB}.}

\begin{remark}
Note that if $Y$ is an \emph{integral} homology sphere, the argument used to prove \cite[Theorem 2.13]{BB} implies that actually $Y$ embeds in $S^4$, i.e. $\emb(Y)=0$.
\end{remark}

\begin{proof}
We need to show that $Y$ embeds in $S^2 \times S^2$.
Let $B$ be a rational homology ball with a single 1-handle and a single 2-handle, such that $\partial B= Y$. Then $B$ has a handle diagram as in Figure \ref{f:rational ball},
where there are $n$ strands running through the dotted circle, and the box labeled $D$ represents some $n$-tangle filling which results in a single attaching circle for
a 2-handle with framing $m$. Since $\partial B=Y$, the 2-handle attaching circle has an odd linking number with the dotted circle (because if we surger the 1-handle to a 0-framed 2-handle
then the intersection form of the resulting 4-manifold will present $H_1(Y)$). Note that this implies that $n$ (the total number of strands) is odd as well.
Let $\gamma$ denote the attaching circle for the 2-handle.

Now first consider the case when the framing $m$ is even. Then we attach two additional 2-handles to $B$ along 0-framed meridians of $\gamma$ and the dotted circle,
as in the left-hand side of Figure \ref{f:ballsplus}.
We can then slide $\gamma$ off the dotted circle by sliding over the 0-framed meridian of the dotted circle, and then the 2-handle attached to the meridian cancels the 1-handle.
What remains after the cancellation is a 2-handle attached to $\gamma$ with framing $m$, and another 2-handle attached to a 0-framed meridian of $\gamma$. As in the proof of Theorem \ref{all embed},
we can slide over this meridian some number of times to realize a 0-framed Hopf pair, and then cap off with a 4-handle to obtain $S^2 \times S^2$.

\begin{figure}[h]
\labellist
\pinlabel $m$ at 141 140
\pinlabel $0$ at 165 100
\pinlabel $0$ at 159 39
\pinlabel $m$ at 356 140
\pinlabel $1$ at 380 100
\pinlabel $0$ at 374 39
\endlabellist
\centering
\includegraphics[scale=0.9]{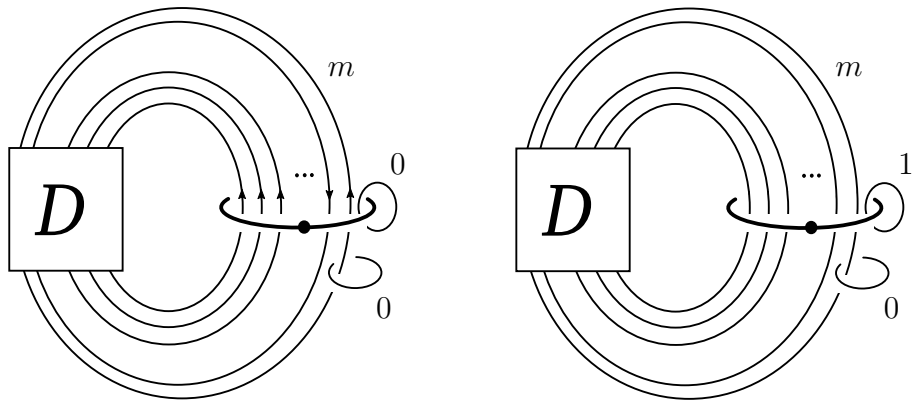}
\caption{Adding 2-handles to the rational homology ball.}\label{f:ballsplus}
\end{figure}

When $m$ is odd we again add 2-handles along meridians of $\gamma$ and the dotted circle, but this time we use framing 1 with the meridian of the dotted circle
as in the right-hand side of Figure \ref{f:ballsplus}. When we slide $\gamma$ over the 1-framed meridian we increase $m$ by one and $\gamma$ now links this meridian once
(see the left-hand side of Figure \ref{f:ballslide}).
By sliding the 1-framed meridian over the 0-framed meridian we can unlink $\gamma$ from the 1-framed meridian, and we have reduced to the starting position except $m$
has been increased by one and $\gamma$ runs one fewer time through the dotted circle (see the right-hand side of Figure \ref{f:ballslide}).

\begin{figure}[h]
\labellist
\pinlabel $m+1$ at 70 125
\pinlabel $1$ at 84 77
\pinlabel $0$ at 78 47
\pinlabel $m+1$ at 204 125
\pinlabel $1$ at 228 77
\pinlabel $0$ at 211 47
\endlabellist
\centering
\includegraphics[scale=1.0]{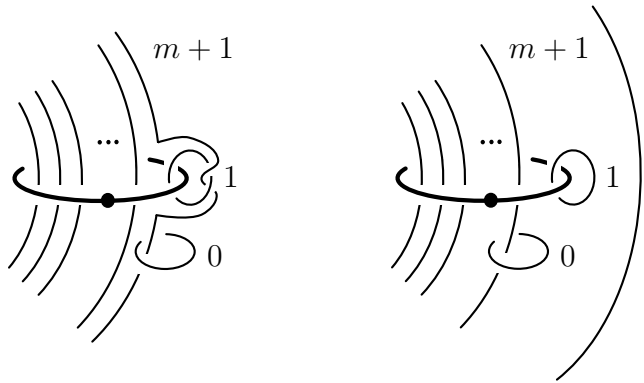}
\caption{Sliding $\gamma$ off the dotted circle.}\label{f:ballslide}
\end{figure}

We now repeat this combination of handle slides $n$ times to slide $\gamma$ completely off the dotted circle, and then the 1-framed meridian cancels the 1-handle.
What remains is a 2-handle attached to $\gamma$ with framing $m+n$, and the 2-handle attached to the 0-framed meridian. Since $n$ is odd, $m+n$ is even,
and as before we can obtain $S^2 \times S^2$ by capping off with a 4-handle.
\end{proof}

\begin{subsection}{Surgery on knots}

Let $S^3_{p/q}(K)$ denote the 3-manifold obtained by $p/q$-Dehn surgery on a knot $K$ in $S^3$. Here we prove some simple facts about embedding numbers for 3-manifolds obtained by surgery on knots.

\begin{proposition}\label{surgery prop} Let $K$ be a knot in $S^3$.
\begin{enumerate}
\item $\emb(S^3_{p/q}(K)) \geq 1$ for all $|p| >1$ and $q \neq 0$.
\item $\emb(S^3_{2n}(K)) = 1$ for all nonzero $n$.
\item $\emb(S^3_{2n+1}(K)) > 1$ if $2n+1$ is not a square.
\item $\emb(S^3_{1/{2n}}(K)) \leq 2$ for all nonzero $n$.
\end{enumerate}
\end{proposition}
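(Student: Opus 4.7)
The plan is to dispatch the four claims separately, with (1) doing most of the real work and (2)--(4) following as short consequences. The strategy for (1) will be Mayer--Vietoris together with the cyclic structure of $H_1(Y)$; for (2) and (4), even-framed surgery via Theorem~\ref{all embed}; and for (3), an appeal to Theorem~\ref{t:oddH1} combined with the standard square obstruction on $|H_1|$ coming from bounding a rational homology ball.

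For (1), I will suppose $Y = S^3_{p/q}(K)$ embeds in $S^4$ with complementary regions $X_1$ and $X_2$. Because $|p| > 1$ and $q \neq 0$, $Y$ is a rational homology sphere with $H_1(Y) \cong \Z/p$, so the Mayer--Vietoris sequence (using $H_2(Y) = 0$ and $H_2(S^4) = H_1(S^4) = 0$) collapses to
\[
0 \to H_2(X_1) \oplus H_2(X_2) \to 0 \to H_1(Y) \to H_1(X_1) \oplus H_1(X_2) \to 0.
\]
Hence $H_2(X_i) = 0$ and each $X_i$ is a rational homology ball bounding $Y$, so a standard long exact sequence computation for the pair $(X_i, Y)$ combined with Poincar\'e--Lefschetz duality yields $|H_1(Y)| = |H_1(X_i)|^2$ for $i = 1, 2$. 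Together with the direct sum decomposition this gives $H_1(Y) \cong G \oplus G$ with $|G| = \sqrt{|p|}$, which is impossible for the cyclic group $\Z/p$ unless $G = 0$ and $|p| = 1$, a contradiction.

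For (2), the one-component surgery description presents $S^3_{2n}(K)$ with an even framing, so Theorem~\ref{all embed} embeds it in $S^2 \times S^2$, giving $\emb \leq 1$; combined with (1) this forces $\emb = 1$. For (3), $H_1(Y) \cong \Z/(2n+1)$ has odd order, so $\emb(Y) \leq 1$ would by Theorem~\ref{t:oddH1} produce a rational homology ball with boundary $Y$, and then the same square constraint used in (1) would force $|2n+1|$ to be a perfect square, contrary to hypothesis.

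For (4), I will use the slam-dunk move. Consider the two-component link $L = K \cup U$, where $U$ is a small unknotted meridian of $K$, with framings $0$ on $K$ and $-2n$ on $U$. Slam-dunking $U$ replaces $K$'s framing by $0 - 1/(-2n) = 1/(2n)$, so $L$ presents $S^3_{1/(2n)}(K)$ as integer surgery with all-even coefficients, and Theorem~\ref{all embed} delivers the embedding into $\#_2(S^2\times S^2)$. The real crux across the four parts is the step in (1) where one must verify that \emph{both} complementary pieces are rational homology balls; once that is in hand, the cyclic obstruction is immediate, and (2)--(4) are essentially one-line corollaries of the established machinery.
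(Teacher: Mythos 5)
Your parts (2) and (4) coincide with the paper's argument (a one-component even surgery plus Theorem~\ref{all embed}, and the reverse slam dunk producing a two-component even-framed link, respectively), and part (3) follows the paper's route through Theorem~\ref{t:oddH1} plus the square obstruction on $|H_1|$. The only real divergence is in (1): the paper simply quotes the Gilmer--Livingston/Hantzsche result that a rational homology sphere embedded in $S^4$ has $H_1\cong G\oplus G$, whereas you reprove it via Mayer--Vietoris and the long exact sequence of the pair; that buys self-containedness, but one step must be made explicit. The identity $|H_1(Y)|=|H_1(X_i)|^2$ is \emph{not} true for an arbitrary rational homology ball filling (a punctured rational homology $S^4$ with nontrivial $H_1$ bounds $S^3$, for instance); it requires surjectivity of $H_1(Y)\to H_1(X_i)$. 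In your setting this surjectivity does follow from the Mayer--Vietoris sequence you wrote (since $H_1(S^4)=0$ the map $H_1(Y)\to H_1(X_1)\oplus H_1(X_2)$ is onto), after which $H_1(X_i,Y)=0$ and $H_2(X_i,Y)\cong H^2(X_i)\cong H_1(X_i)$ give the count; and equal orders $\sqrt{|p|}>1$ on both summands do contradict cyclicity of $\Z/p\Z$, so the conclusion stands once that is said. Correspondingly, in (3) you should not invoke ``the square constraint used in (1)'' verbatim: the rational homology ball furnished by the statement of Theorem~\ref{t:oddH1} is not a priori one for which that surjectivity holds. Either cite the standard fact that a rational homology sphere bounding a rational homology ball has $|H_1|$ a square (as the paper does, via~\cite{AG}), or observe that the ball can be taken to be a complementary region of the embedding in $S^2\times S^2$, where the same Mayer--Vietoris surjectivity argument goes through.
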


\begin{proof}
\begin{enumerate}
\item We must show that $S^3_{p/q}(K)$ does not embed in $S^4$. If a rational homology 3-sphere $Y$ embeds in $S^4$, then $H_1(Y) \cong G \oplus G$ for some torsion group $G$~\cite{Gil-Liv}.
Since $H_1(S^3_{p/q}(K))\cong \Z/p\Z$, $S^3_{p/q}(K)$ does not embed in $S^4$.
\item This follows from Theorem \ref{all embed} and Part (1).
\item If $\emb(S^3_{2n+1}(K)) \leq 1$, then $S^3_{2n+1}(K)$ embeds in $S^2\times S^2$. By Theorem \ref{t:oddH1} $S^3_{2n+1}(K)$  must bound a rational homology ball,
and it is well-known that if a rational homology sphere $Y$ bounds a rational homology ball then the order of $H_1(Y)$ is a square (see, for instance, \cite[Proposition 2.2]{AG}).
\item By the reverse slam dunk move illustrated in Figure \ref{f:slamdunk} (see~\cite[Section 5.3]{GS} for a discussion of the slam dunk) with $m=2n$, we can realize $M$ as integral surgery on a
2-component link where the coefficients are even. Then by Theorem \ref{all embed} $\emb(M) \leq 2$.\qedhere
\end{enumerate}
\end{proof}

\begin{figure}[h]
\labellist
\pinlabel $1/m$ at 20 120
\pinlabel $0$ at 98 120
\pinlabel $-m$ at 130 65
\endlabellist
\centering
\includegraphics[scale=.80]{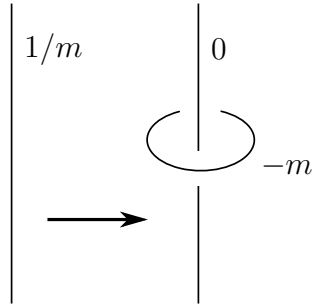}
\caption{The reverse slam dunk move.}\label{f:slamdunk}
\end{figure}

\begin{remark}
It follows from (4) above and Proposition \ref{Zspheres} that if $\emb(S^3_{1/{2m}}(K)) \neq 2$, then $S^3_{1/{2m}}(K)$ bounds an integral homology ball and so all integral homology cobordism invariants (for example, the Rokhlin invariant and the Heegaard Floer correction term) must vanish for this integral homology sphere.
\end{remark}

Proposition \ref{surgery prop}(3) suggests that most odd surgeries on a knot will have embedding number larger than 1. However, in the next example we show that this is not always the case.

\begin{example}
We can show that $\emb(S^3_9 (T_{2,3})) = 1$ by realizing $S^3_9 (T_{2,3})$ as the boundary of a rational homology ball with a single 1-handle and a single 2-handle,
and then applying Theorem \ref{t:one1-handle}. This is demonstrated in Figure \ref{f:trefoil}. We blow up to obtain the second picture, which we then think of as a 4-dimensional 2-handlebody.
Since the 0-framed 2-handle is attached along the unknot, we can surger it to a 1-handle in dotted circle notation to get the third picture.
In the third picture we see $S^3_9 (T_{2,3})$ as the boundary of the required rational homology ball.

Note that the same argument works for $S^3_{d^2}(T_{d-1,d})$ and $S^3_{d^2}(T_{d,d+1})$ for each odd $d$.
\end{example}

\begin{figure}[h]
\labellist
\pinlabel $9$ at 40 25
\pinlabel $-1$ at 368 155
\pinlabel $-1$ at 571 155
\pinlabel $0$ at 288 20
\endlabellist
\centering
\includegraphics[scale=0.7]{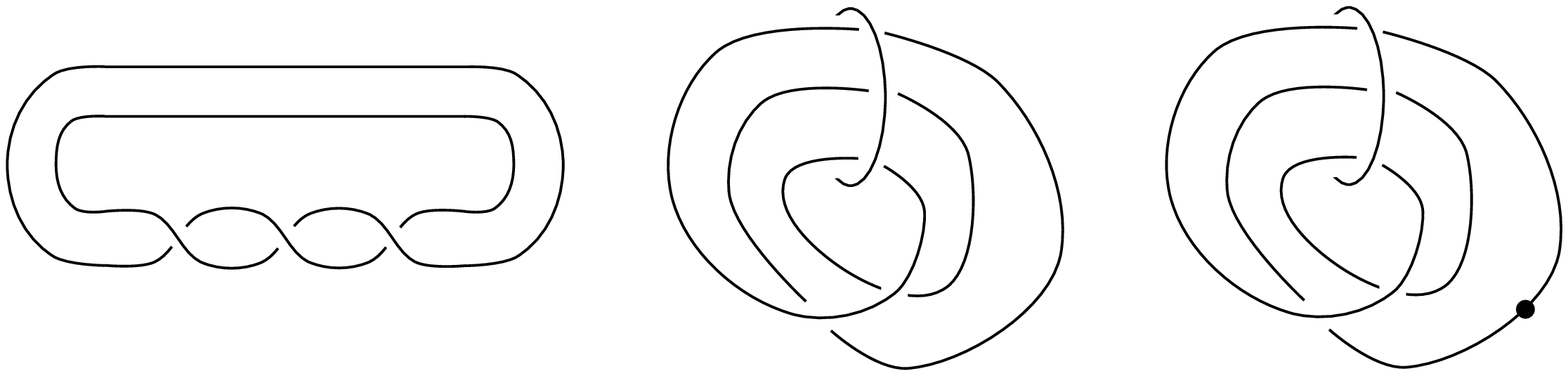}
\caption{$S^3_9 (T_{2,3})$ bounds a rational homology ball.}\label{f:trefoil}
\end{figure}

\end{subsection}

\begin{subsection}{Branched double covers} 
We finish this section by relating the embedding numbers of branched double covers to classical knot invariants.
Given a knot $K$ in $S^3$, let $\Sigma(K)$ denote the double cover of $S^3$ branched over $K$. Furthermore, let $g(K)$ denote the Seifert genus of the knot and $u(K)$ denote the unknotting number.
 
\begin{proposition}\label{DBC}
For a knot $K$ in $S^3$, let $m$ denote the minimum of $g(K)$ and $u(K)$. Then $\emb(\Sigma(K))\le 2m$.
\end{proposition}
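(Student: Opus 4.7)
The plan is to establish the two inequalities $\emb(\Sigma(K))\le 2g(K)$ and $\emb(\Sigma(K))\le 2u(K)$ separately; the proposition then follows by taking the minimum.

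For the genus bound I would take a minimal-genus Seifert surface $F\subset S^3$ for $K$ with $g(F)=g(K)$, push its interior into $B^4$ to obtain a properly embedded surface $\widetilde F$, and let $W$ be the double cover of $B^4$ branched along $\widetilde F$, so that $\partial W=\Sigma(K)$. Putting $F$ in standard form as a disk with $2g$ bands attached, the branched double cover of the pushed-in disk is $B^4$ again, while each band contributes a single $2$-handle (attached along the double lift of the band's core). Hence $W$ admits a handle decomposition with one $0$-handle and $2g$ $2$-handles and no $1$-handles, and the intersection form computed in this basis is the symmetrized Seifert form $V+V^T$ of $F$. Its diagonal entries are $2V_{ii}$, so the form — and hence every attaching framing — is even. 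Doubling $W$ as in the proof of Theorem \ref{all embed} yields $\mathcal D W\cong\#_{2g}S^2\times S^2$, giving $\Sigma(K)\subset\#_{2g}S^2\times S^2$.

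For the unknotting bound, fix a sequence $K=K_0,K_1,\dots,K_u=U$ of crossing changes realizing $u=u(K)$. By the Montesinos trick, each crossing change lifts to a $\pm\tfrac12$-surgery on the lift of the unknotting arc in the branched double cover, so $\Sigma(K)$ is obtained from $\Sigma(U)=S^3$ by $u$ successive $\pm\tfrac12$-surgeries. Each such half-integer surgery can be replaced, via the reverse slam-dunk already used in the proof of Proposition \ref{surgery prop}(4) (equivalently, by the continued fraction expansion $\pm\tfrac12=[0,\mp 2]$), by integer surgery on a two-component link with framings $0$ and $\mp 2$ — both of which are even. Concatenating, $\Sigma(K)$ is presented as integer surgery on a $2u$-component link in $S^3$ all of whose framings are even, and Theorem \ref{all embed} gives $\emb(\Sigma(K))\le 2u$.

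The step requiring the most care is the first one: verifying that the branched double cover $W$ really has a handle decomposition with no $1$-handles and intersection form $V+V^T$. This is classical, but it requires writing down the local model for the branched double cover along a band in $B^4$ to identify the attaching framings with the diagonal entries $V_{ii}$ and the pairwise linking numbers with the off-diagonal entries of $V+V^T$; once this identification is in hand, evenness is automatic and the rest of the proof is formal. The unknotting bound, by contrast, is essentially a bookkeeping exercise once the Montesinos trick and the reverse slam-dunk are in place.
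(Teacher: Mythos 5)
Your genus bound is correct and is essentially the paper's argument: push in a minimal Seifert surface, apply the Akbulut--Kirby handle description of the double cover of $B^4$ branched over a surface with one $0$-handle and $2g$ $1$-handles (giving one $0$-handle, $2g$ $2$-handles, even framings, intersection form $V+V^T$), and double. The paper in fact treats both bounds uniformly this way: an unknotting sequence of length $u$ also produces a surface in $B^4$ bounded by $K$ with a single minimum and $2u$ saddles (resolve the double points of the traced-out immersed disk), so the same branched-cover step gives $\emb(\Sigma(K))\le 2u$ without ever invoking the Montesinos trick.

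Your unknotting bound, as written, has a genuine error: a crossing change does correspond to a half-integral surgery upstairs, but the coefficient is \emph{not} $\pm\tfrac12$ in general. For example $u(T_{2,3})=1$ and $\Sigma(T_{2,3})=L(3,1)$, which is $\pm\tfrac32$-surgery on the unknot; it cannot be $\pm\tfrac12$-surgery on any knot in $S^3$, since such a surgery yields an integral homology sphere while $|H_1(L(3,1))|=3$. (The correct statement, as in Montesinos and as used by Ozsv\'ath--Szab\'o and Owens--Strle, is that $\Sigma(K)$ is surgery on a $u$-component link in $S^3$ with coefficients $p_i/2$, $p_i$ odd.) Consequently your expansion $\pm\tfrac12=[0,\mp2]$ does not apply, and the framings you would actually get from the reverse slam dunk are $(p_i\pm1)/2$ and $\pm2$, where $(p_i\pm1)/2$ need not be even for a fixed choice of sign. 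The argument is salvageable: since $p_i$ is odd, exactly one of $(p_i-1)/2$, $(p_i+1)/2$ is even, so choosing for each component the expansion $p_i/2=[(p_i-1)/2,-2]^-$ or $[(p_i+1)/2,2]^-$ accordingly produces a $2u$-component link with all framings even (evenness of the intersection form only requires even diagonal entries), and then Theorem \ref{all embed} gives $\emb(\Sigma(K))\le 2u$. You would also need to justify, or cite, the standard fact that the $u$ lifted surgery curves can be taken to form a single link in $S^3$ with all coefficients half-integral; the paper's surface-in-$B^4$ route avoids these issues entirely.
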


\begin{proof}
It is a standard fact that $K$ bounds a surface in $B^4$, built with a single 0-handle and $2m$ 1-handles.
(Note that this is not the case if we replace the Seifert genus with the slice genus.)
Now Akbulut and Kirby~\cite{AK} gave an algorithm for how to build a handle decomposition of the branched double cover of $B^4$ over such a surface, and the resulting handle decomposition consists of a single 0-handle and $2m$ 2-handles, all with even framing. Since the boundary of this 4-manifold is $\Sigma(K)$, Theorem \ref{all embed} then finishes the proof.
\end{proof}

\end{subsection}
\end{section}
%%%%%%%%%%%%%%%%%%%%%%%%%%%%%%%%%%%%%%%%%%%%%%%%%%%%%%%%
%%%%%%%%%%%%%%%%%%%%%%%%%%%%%%%%%%%%%%%%%%%%%%%%%%%%%%%%
\begin{section}{Brieskorn spheres}\label{Brieskorn}

We now consider the embedding numbers of a specific class of 3-manifolds. Recall that the Seifert fibered manifold $\Sigma(p,q,r)$ is the boundary of the Milnor fiber $M_c(p,q,r)$;
this is a spin 4-manifold that can be constructed by taking the $p$-fold cover of $B^4$ branched over the pushed-in Seifert surface of minimal genus of the $T_{q,r}$ torus link.
Furthermore, $M_c(p,q,r)$ admits a handle decomposition with one 0-handle and $(p-1)(q-1)(r-1)$ 2-handles,
all with even framing (see~\cite{AK} and \cite[Section 6.3]{GS}). Therefore doubling $M_c(p,q,r)$ results in $\#_{(p-1)(q-1)(r-1)} S^2 \times S^2$,
and we get the following upper bound for the embedding numbers of these Seifert fibered manifolds.

\begin{proposition}\label{SFSbound}
For the Seifert fibered manifold $\Sigma (p,q,r)$, we have $\emb (\Sigma (p,q,r)) \leq (p-1)(q-1)(r-1)$.
\end{proposition}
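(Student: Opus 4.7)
The proof is essentially a direct application of Theorem \ref{all embed} combined with the handle decomposition of the Milnor fiber already recalled in the preceding paragraph. My plan is to unpack the argument in three short moves and cite the necessary external input.

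First, I would recall that $\Sigma(p,q,r)$ arises as the link of the Brieskorn singularity $\{z_1^p + z_2^q + z_3^r = 0\}$, and consequently bounds the Milnor fiber $M_c(p,q,r)$; equivalently, $M_c(p,q,r)$ is the $p$-fold cyclic cover of $B^4$ branched over the pushed-in minimal Seifert surface of the torus link $T_{q,r}$, as already stated in the text. Next, I would invoke the Akbulut--Kirby branched-cover algorithm \cite{AK} (see also \cite[Section 6.3]{GS}), which produces an explicit handle decomposition of $M_c(p,q,r)$ consisting of a single $0$-handle together with $(p-1)(q-1)(r-1)$ $2$-handles, all attached along a framed link with even framings. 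Consequently, $\Sigma(p,q,r)$ is realized as integral surgery on a $(p-1)(q-1)(r-1)$-component link in $S^3$ with every coefficient even.

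Having set this up, I would simply apply Theorem \ref{all embed}: any $3$-manifold realized as even integral surgery on an $n$-component link embeds in $\#_n S^2 \times S^2$, because doubling the associated spin $2$-handlebody yields $\#_n S^2\times S^2$. Applied with $n=(p-1)(q-1)(r-1)$, this gives an embedding of $\Sigma(p,q,r)$ into $\#_{(p-1)(q-1)(r-1)} S^2\times S^2$, and the bound on the embedding number follows by definition.

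There is really no obstacle here beyond citing the right inputs; the entire content of the statement is that the standard Akbulut--Kirby handle picture of the Milnor fiber has the right shape (one $0$-handle, even $2$-handle framings, exactly $(p-1)(q-1)(r-1)$ handles) to feed into Theorem \ref{all embed}. If one wanted to avoid quoting \cite{AK} as a black box, the only mildly substantive step would be to verify that the number of $2$-handles equals the Milnor number $(p-1)(q-1)(r-1)$, which matches the second Betti number $b_2(M_c(p,q,r))$ and is standard; and that the intersection form of $M_c(p,q,r)$ is even (equivalently, $M_c(p,q,r)$ is spin), which is automatic since the Milnor fiber of a weighted homogeneous singularity is a Stein, simply connected $4$-manifold that is known to be spin in this case.
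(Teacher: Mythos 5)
Your argument is exactly the paper's: the Milnor fiber $M_c(p,q,r)$ has, by the Akbulut--Kirby algorithm, a handle decomposition with one $0$-handle and $(p-1)(q-1)(r-1)$ even-framed $2$-handles, so Theorem~\ref{all embed} (doubling the spin $2$-handlebody) gives the bound. Correct and essentially identical to the paper's reasoning, which is contained in the paragraph preceding the proposition.
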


Note that if $p$, $q$, and $r$ are relatively prime then $\Sigma (p,q,r)$ is a Brieskorn homology sphere. While Proposition \ref{SFSbound} gives an upper bound for the embedding numbers of Brieskorn spheres,
in many cases we can improve on this bound or even give an exact computation.

%\mMarco{Fuller (in the paper I put in our folder) also says that $\emb(\Sigma(2,2g + 1,2(2g + 1)n - 1))\le 2$, since it's the boundary of a (generalised) nucleus. Probably $\emb = 2$ (surgeries along torus knots).}
%\mKyle{Fuller's generalized nuclei are spin iff $n$ is even, so we only get information about the embedding number when $n$ is even, but this follows from Proposition \ref{even fraction}.}

\begin{proposition}
If relatively prime $p$, $q$, and $r$ are all odd, have absolute value greater than $1$, and satisfy $pq+pr+qr=  -1$, then $\emb (\Sigma (|p|, |q|, |r|)) = 2$.
\end{proposition}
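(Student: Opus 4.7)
The plan is to prove both inequalities $\emb(\Sigma(|p|,|q|,|r|)) \leq 2$ and $\emb(\Sigma(|p|,|q|,|r|)) \geq 2$ separately.

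For the upper bound, I would use double branched covers via Proposition~\ref{DBC}. Since $p, q, r$ are all odd, the pretzel link $K := P(p, q, r)$ (read with the signed parameters as in the hypothesis) is a single-component knot, and the standard pretzel Seifert surface is orientable of genus at most $1$; because $|p|, |q|, |r| > 1$ forces $K$ to be non-trivial, $g(K) = 1$. A direct Montesinos-type computation should identify the double branched cover $\Sigma_2(K)$ as the Seifert fibered integer homology sphere with normalized invariants $\{(|p|,1),(|q|,|q|-1),(|r|,|r|-1); 2\}$ and Euler number $-1/(|p||q||r|)$, where the hypothesis $pq+pr+qr = -1$ is precisely what pins down this Euler number; this manifold is exactly $\Sigma(|p|, |q|, |r|)$. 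Proposition~\ref{DBC} then yields $\emb(\Sigma(|p|,|q|,|r|)) \leq 2g(K) = 2$.

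For the lower bound, I would use an invariant-theoretic obstruction. Since $|H_1(\Sigma(|p|,|q|,|r|))| = |pq+pr+qr| = 1$, the manifold is an integer homology sphere. If $\emb \leq 1$, then Proposition~\ref{Zspheres} produces a smooth integer homology $4$-ball bounded by $\Sigma$, which would force the Casson invariant $\lambda(\Sigma)$ to vanish. I would then invoke the Fintushel--Stern formula for the Casson invariant of a Seifert fibered integer homology sphere (equivalently, Saveliev's signed count of irreducible $SU(2)$-characters of $\pi_1$): this yields $\lambda(\Sigma(|p|,|q|,|r|)) \neq 0$ for every nontrivial Brieskorn homology sphere, and in particular for ours (the hypotheses $|p|,|q|,|r| > 1$ pairwise coprime ensure $\Sigma \neq S^3$ and, being odd and coprime, also $1/|p|+1/|q|+1/|r| < 1$). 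The contradiction gives $\emb \geq 2$.

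The hardest step I expect is the lower bound, namely verifying $\lambda \neq 0$ uniformly across the infinite family from the explicit Fintushel--Stern formula, which mixes a rational function of the $a_i$ with Dedekind sums. My hope would be to exploit the clean relation $p(|q|+|r|) = |q||r|+1$ (equivalent to the hypothesis, after fixing signs) to reduce the formula to a manifestly nonvanishing expression. A cleaner alternative would be to replace $\lambda$ by the Neumann--Siebenmann $\bar\mu$-invariant, or by the Heegaard Floer correction term $d(\Sigma)$ computed from the canonical star-shaped negative-definite plumbing of $-\Sigma(|p|,|q|,|r|)$; each of these admits an effective plumbing-theoretic formula for Seifert fibered integer homology spheres and likewise obstructs bounding a $\Z$-homology ball.
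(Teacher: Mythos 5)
Your upper bound is correct but takes a genuinely different route from the paper. The paper works directly with the standard surgery presentation of $\Sigma(|p|,|q|,|r|)$ (a $0$-framed unknot with three meridians framed $\pm p,\pm q,\pm r$), slides two meridians over the third (producing even framings because $p,q,r$ are all odd) and slam dunks away the $0$-framed curve, leaving a two-component even-framed surgery diagram, so Theorem~\ref{all embed} gives $\emb\le 2$. Your route instead realizes the manifold as the double branched cover of the odd pretzel knot $P(p,q,r)$, which has an orientable genus-$1$ pretzel surface, and applies Proposition~\ref{DBC}. This works: $\det P(p,q,r)=|pq+pr+qr|=1$, so the cover is an integral homology sphere, Seifert fibered with exceptional fibers of multiplicities $|p|,|q|,|r|$, and such a homology sphere is $\pm\Sigma(|p|,|q|,|r|)$; since $\emb$ is orientation-insensitive (Proposition~\ref{p:generalproperties}), the precise Euler-number bookkeeping you sketch is not even needed. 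Your approach trades Kirby calculus for the Akbulut--Kirby branched-cover handle decomposition hiding inside Proposition~\ref{DBC}; both are fine.

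The lower bound, however, has a genuine gap. You reduce, exactly as the paper does, via Proposition~\ref{Zspheres} to showing that $\Sigma(|p|,|q|,|r|)$ bounds no integral homology ball, but the obstruction you invoke does not exist: the Casson invariant $\lambda$ is \emph{not} a homology cobordism invariant and does not vanish on homology spheres bounding homology balls --- only its mod $2$ reduction, the Rokhlin invariant, does. For instance $\Sigma(2,3,13)$ bounds a contractible $4$-manifold (Akbulut--Kirby) yet has $\lambda\ne 0$. So ``$\lambda\ne 0$ for every nontrivial Brieskorn sphere'' (which is true) proves nothing here. Your fallback suggestions are also not carried out and are not obviously salvageable across the whole family: the $d$-invariant genuinely obstructs bounding a $\Z$-homology ball, but $d$ can vanish for Brieskorn spheres that bound no such ball (e.g.\ $d(\Sigma(2,3,7))=0$), and you have not verified $d\ne 0$ for, say, $\Sigma(3,5,7)$; similarly for $\bar\mu$, note that the Rokhlin invariants of these manifolds must vanish (otherwise $\emb\ge 8$, contradicting $\emb=2$), so any nonvanishing of $\bar\mu$ would have to occur in its integral lift and again needs proof. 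The paper avoids all of this by quoting the theorem of Fintushel and Stern \cite[Theorem~10.7]{FS1}, which states precisely that Brieskorn spheres satisfying $pq+pr+qr=-1$ with $|p|,|q|,|r|>1$ never bound integral homology balls; some such input (instanton-theoretic or an actual computation of a suitable invariant for the whole family) is needed to close your argument.
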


\begin{proof}
If $\emb (\Sigma (|p|, |q|, |r|)) < 2$, then by Proposition \ref{Zspheres}, $\Sigma (|p|, |q|, |r|)$ bounds an integral homology ball.
However, Fintushel and Stern \cite[Theorem 10.7]{FS1} showed that these manifolds never bound integral homology balls. 
Therefore $\emb (\Sigma (|p|, |q|, |r|)) \geq 2$.
Now $\Sigma (|p|, |q|, |r|)$ admits a surgery diagram with a 0-framed unknot and three meridians with framings $\pm p$, $\pm q$, and $\pm r$ (see \cite[Section 1.1.4]{Sav}).
Sliding two meridians over the third allows us to slam dunk (see Figure \ref{f:slamdunk}) the 0-framed unknot against the third meridian; this eliminates the 0-framed unknot and turns the third meridian into an $\infty$-framed curve, which can also be removed from the diagram. The result is a surgery diagram with two even-framed components.

%, resulting in a surgery diagram with two even-framed components.
This shows that $\emb (\Sigma (|p|, |q|, |r|)) = 2$.
\end{proof}

\begin{example}
For odd $p$, the family $\Sigma (p-2, p, (p^2-2p-1)/2)$ of Brieskorn spheres satisfy $\emb (\Sigma (p-2, p, (p^2-2p-1)/2)) = 2$.
\end{example}

\begin{proposition}\label{Poincare}
For the Poincar\'e sphere $\Sigma(2,3,5)$, we have $\emb(\Sigma(2,3,5))=8$.
\end{proposition}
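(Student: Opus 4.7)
The upper bound $\emb(\Sigma(2,3,5)) \leq 8$ is immediate from Proposition~\ref{SFSbound}, since $(2-1)(3-1)(5-1) = 8$. So the heart of the proof is establishing the matching lower bound $\emb(\Sigma(2,3,5)) \geq 8$.

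For the lower bound, I would argue by contradiction, so suppose $\Sigma = \Sigma(2,3,5)$ embeds in $W = \#_n S^2 \times S^2$. Since $\Sigma$ is connected and $W$ is simply connected, this embedding separates $W$ into two compact spin pieces $X_1, X_2$ with $\partial X_1 = \Sigma$ and $\partial X_2 = -\Sigma$; since $H^1(\Sigma;\mathbb{Z}/2\mathbb{Z}) = 0$, spin structures on the pieces glue freely along $\Sigma$. Let $M = M_c(2,3,5)$ denote the Milnor fiber, a spin $4$-manifold with $\partial M = \Sigma$, intersection form $-E_8$, and $b_2(M) = 8$. Form the closed spin $4$-manifolds
\[
Z_1 = X_1 \cup_\Sigma (-M), \qquad Z_2 = X_2 \cup_{-\Sigma} M.
\]
Writing $s = \sigma(X_1)$, Novikov additivity and $\sigma(W) = 0$ give $\sigma(X_2) = -s$ and
\[
\sigma(Z_1) = s + 8, \qquad \sigma(Z_2) = -(s+8).
\]
Also $b_2(Z_i) = b_2(X_i) + 8$, and a Mayer--Vietoris computation using $H_*(\Sigma) = H_*(S^3)$ gives $b_2(X_1) + b_2(X_2) = b_2(W) = 2n$.

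Now I would split into cases according to whether $s + 8$ vanishes. Rokhlin's theorem forces $\sigma(Z_i) \in 16\mathbb{Z}$, so $s + 8 \equiv 0 \pmod{16}$, and in particular either $s+8 = 0$ or $|s+8| \geq 16$. If $|s+8| \geq 16$, Furuta's 10/8-Theorem gives $b_2(Z_i) \geq \tfrac{10}{8}|s+8| + 2$, hence $b_2(X_i) \geq \tfrac{5}{4}|s+8| - 6 \geq 14$, so summing yields $2n \geq 28$, i.e.\ $n \geq 14$. If $s + 8 = 0$, then $\sigma(X_1) = -8$ and $\sigma(X_2) = 8$, so the trivial inequality $b_2(X_i) \geq |\sigma(X_i)|$ gives $b_2(X_i) \geq 8$ and hence $2n \geq 16$. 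Either way $n \geq 8$, completing the proof.

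The main subtlety is precisely the case $\sigma(Z_i) = 0$, where Furuta's theorem provides no information and one must fall back on the crude signature bound $b_2 \geq |\sigma|$; since this is the case that actually achieves equality (realized by $X_1 = -M$ and $X_2 = M$ sitting inside the double $\mathcal{D}M \cong \#_8 S^2 \times S^2$ from Theorem~\ref{all embed}), it is not a mere technicality but the case that pins the answer at $8$. Everything else -- Rokhlin, Furuta, and Novikov additivity -- enters to rule out the possibility that some non-obvious split of $W$ permits a smaller $n$.
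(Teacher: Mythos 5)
Your proof is correct, but it runs along a somewhat different track than the paper's. The paper argues purely at the level of the pieces: if $\Sigma(2,3,5)$ embedded in $\#_m S^2\times S^2$ with $m<8$, then (as in Proposition~\ref{Zspheres}, using that $\Sigma(2,3,5)$ is an integral homology sphere) the two spin pieces $U,V$ have \emph{even, unimodular} intersection forms whose direct sum has rank $2m<16$; by the classification of even unimodular forms, any such form of nonzero signature forces total rank at least $16$, so $\sigma(U)=\sigma(V)=0$, contradicting the nontriviality of the Rokhlin invariant of $\Sigma(2,3,5)$. There is no capping and no appeal to Furuta. You instead cap each piece with $\pm M_c(2,3,5)$ and apply Rokhlin's theorem to the resulting closed spin manifolds, then the crude bound $b_2\ge|\sigma|$; this avoids needing unimodularity or evenness of the pieces' forms and is exactly the style the paper later uses for $L_{12}$, $L_{17}$, $L_{19}$, so it is a perfectly good alternative. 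Two small remarks: your Furuta branch is redundant, since Rokhlin already gives $\sigma(X_1)\equiv 8\pmod{16}$, hence $|\sigma(X_i)|\ge 8$ and $b_2(X_i)\ge 8$ in \emph{all} cases, which yields $n\ge 8$ directly; and, once phrased this way, your argument (like the paper's) proves the stronger statement noted after the proposition, namely that every integral homology sphere with nontrivial Rokhlin invariant has embedding number at least $8$, by replacing $M_c(2,3,5)$ with any spin filling of signature $\equiv 8\pmod{16}$.
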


\begin{proof}
Since $\Sigma(2,3,5)$ is the boundary of the $E_8$ plumbing, we immediately have $\emb(\Sigma(2,3,5))\leq 8$ and that the Rokhlin invariant $\mu(\Sigma(2,3,5))$ is nonzero.
Now assume that $\Sigma(2,3,5)$ embeds in $\#_m S^2 \times S^2$ for $m<8$, splitting $\#_m S^2 \times S^2$ into two spin pieces $U$ and $V$.
Then by the classification of indefinite, unimodular even forms and the fact that there are no definite, unimodular even forms of rank less than 8, 
both of the intersection forms $Q_U$ and $Q_V$ must have signature 0.
This contradicts the fact that $\Sigma(2,3,5)$ has nontrivial Rokhlin invariant, so $\emb(\Sigma(2,3,5))=8$.
\end{proof}

Note that this proof actually shows that any integral homology sphere with nontrivial Rokhlin invariant has $\emb(\Sigma(2,3,5))\geq8$.

The Poincar\'e sphere is one of a collection of Brieskorn spheres obtained by surgery on torus knots, namely $\Sigma(p,q,pqn \pm 1) = S^3_{-1/n}(T_{p,\pm q})$ \cite[Example 1.2]{Sav}.
Note that when $n$ is even the embedding numbers are less than or equal to $2$ by Proposition \ref{surgery prop}(4). When $n$ is odd, the situation is more difficult.

\begin{proposition}\label{Brieskorn_comp}
For any \emph{odd} integer $n>0$ we have:
\begin{itemize}
%\item $\emb(\Sigma(2,3,6n-1)) = 8$, %\emb(\Sigma(2,5,10n-1)) = 8$.
\item $\emb(\Sigma(2,3,6n+1)) = 10$. %\emb(\Sigma(2,5,10n+1)) = 10$.
\end{itemize}
More generally, for any \emph{odd} integer $n>0$ and \emph{even} integer $p>0$, we have the following bound:
\begin{itemize}
\item $\emb(\Sigma(p,p+1,p(p+1)n + 1)) \leq (p+1)^2 + 1$.
\end{itemize}
\end{proposition}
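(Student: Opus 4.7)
The plan is to construct an explicit spin plumbing bounding the Brieskorn sphere with $(p+1)^2+1$ two-handles, doubling it for the upper bound, and then combine Rokhlin's theorem, the classification of even unimodular forms, and the Ozsv\'ath--Szab\'o correction-term obstruction to get the matching lower bound for $p=2$.

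For the upper bound, set $\alpha_3 = p(p+1)n+1$ and take the Seifert invariants $(-2;\,(p,1),\,(p+1,p),\,(\alpha_3,\beta_3))$ with $\beta_3 = n(p^2+p-1)+1$. A direct check gives
\[
-2\alpha_1\alpha_2\alpha_3 + \alpha_2\alpha_3 + p\alpha_1\alpha_3 + \beta_3\alpha_1\alpha_2 \;=\; 1,
\]
so these are valid Seifert invariants for $\Sigma(p,p+1,\alpha_3)$; orientation is irrelevant by Proposition~\ref{p:generalproperties}(1). Expanding as negative continued fractions gives $\alpha_1/\beta_1 = [p]$, $\alpha_2/\beta_2 = [2, 2, \ldots, 2]$ (with $p$ twos), and (by induction using $\alpha_3 - \beta_3 = n$) $\alpha_3/\beta_3 = [2, 2, \ldots, 2, n+1]$ (with $p(p+1)-1$ twos followed by $n+1$). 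The associated plumbing graph is star-shaped with central weight $-2$ and arms of $1$, $p$, and $p(p+1)$ vertices, totalling $(p+1)^2+1$ vertices. When $p$ is even and $n$ is odd, every weight ($-2$, $-p$, $-(n+1)$) is even, so the plumbing is spin, and Theorem~\ref{all embed} yields $\Sigma(p,p+1,p(p+1)n+1) \hookrightarrow \#_{(p+1)^2+1} S^2 \times S^2$. Specializing to $p=2$ gives $\emb(\Sigma(2,3,6n+1)) \leq 10$.

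For the lower bound, set $\Sigma := \Sigma(2,3,6n+1)$ and let $P_n$ be the rank-$10$ spin plumbing just constructed with $p=2$. Its intersection form is even and unimodular, so $\sigma(P_n) \in \{-8, 0, 8\}$; Brieskorn's signature formula for the Milnor fiber $M_c(2,3,6n+1)$ gives a signature $\equiv -8 \pmod{16}$ for odd $n$, and Novikov additivity together with Rokhlin's theorem pin down $\sigma(P_n) = -8$ and $\mu(\Sigma) = 1$. Suppose $\Sigma$ embeds in $\#_m S^2 \times S^2$ for some $m \leq 9$, giving a spin decomposition $U \cup_\Sigma V$ with $\partial U = \Sigma = -\partial V$. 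As in the proof of Proposition~\ref{Poincare}, Mayer--Vietoris forces $Q_U \oplus Q_V \cong mH$ as even unimodular forms, their signatures sum to zero, and Rokhlin forces $\sigma(U), \sigma(V) \equiv 8 \pmod{16}$. Parity and the classification of even unimodular forms then force $b_2(U), b_2(V)$ to be even and each at least $8$. With $b_2(U)+b_2(V) = 2m \leq 18$, one of them must equal $8$ and hence have form $\pm E_8$; thus $\Sigma$ or $-\Sigma$ bounds a smooth negative-definite spin $4$-manifold with intersection form $-E_8$.

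To rule this out I would invoke the Ozsv\'ath--Szab\'o correction-term inequality: an integer homology sphere $Y$ bounding a smooth negative-definite spin $4$-manifold $W$ satisfies $d(Y) \geq b_2(W)/4$. A standard Heegaard Floer computation (applying the surgery formula to $\Sigma = S^3_{-1/n}(T_{2,3})$) gives $d(\Sigma(2,3,6n+1)) = 0$ for every $n \geq 1$, and hence $d(-\Sigma) = 0$ as well. A $-E_8$ bound would require $d \geq 2$, a contradiction. Therefore $b_2(U), b_2(V) \geq 10$, forcing $m \geq 10$, and $\emb(\Sigma(2,3,6n+1)) = 10$. The main obstacle is the $d$-invariant computation $d(\Sigma(2,3,6n+1)) = 0$ for every odd $n \geq 1$, a standard but nontrivial Heegaard Floer calculation; a secondary check is that $\sigma(P_n) = -8$ (rather than $0$ or $+8$), which follows from the rank-$10$ even unimodular classification together with Rokhlin.
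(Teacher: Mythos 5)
Your proof is correct, and the lower-bound half follows the paper's own argument almost verbatim: nontrivial Rokhlin invariant forces $\sigma(U)\equiv\sigma(V)\equiv 8\pmod{16}$ for any spin splitting of $\#_mS^2\times S^2$ with $m\le 9$, the classification of even unimodular forms and the rank count $b_2(U)+b_2(V)=2m\le 18$ force one piece to be $\pm E_8$, and this is excluded by $d(\Sigma(2,3,6n+1))=0$ together with the Ozsv\'ath--Szab\'o inequality $c_1(\s)^2+b_2(W)\le 4d$ applied to the spin (hence characteristic-zero) structure; the paper cites exactly the same two Heegaard Floer inputs. Where you genuinely diverge is the upper bound. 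The paper stays with the surgery description $\Sigma(p,p+1,p(p+1)n+1)=S^3_{-1/n}(T_{p,p+1})$ and performs Kirby calculus: a reverse slam dunk, a blow-up of the $p+1$ strands to unknot the torus knot with framing $-(p+1)^2$, then $(p+1)^2-1$ further blow-ups and a blow-down (using that $p+1$ is odd) to reach an even-framed link with $(p+1)^2+1$ components, to which Theorem~\ref{all embed} applies. You instead exhibit explicit Seifert invariants $(-2;(p,1),(p+1,p),(\alpha_3,\beta_3))$ with $\beta_3=\alpha_3-n$ --- your normalization identity and the continued-fraction expansions check out, and the resulting star-shaped plumbing has $(p+1)^2+1$ vertices, all with even weights when $p$ is even and $n$ is odd --- so doubling the spin $2$-handlebody gives the same bound. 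Both routes land on the same even-framed presentation count; yours has the advantage of making the spin filling and its intersection form completely explicit (which you then reuse for the Rokhlin computation), while the paper's is a short link-calculus manipulation needing no Seifert-invariant bookkeeping. One small overreach: Novikov additivity plus Rokhlin only determines $\sigma(P_n)$ modulo $16$, so your claim that it equals $-8$ rather than $+8$ is not justified as stated (and with your orientation, $e>0$, the sign is actually the other one); this is harmless, since all you use is $\mu(\Sigma(2,3,6n+1))=1$, which already follows from the Milnor fiber having signature $-8n\equiv 8\pmod{16}$ for odd $n$.
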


\begin{figure}[h]
\labellist
\pinlabel $0$ at 15 5
\pinlabel $+1$ at 123 67
\pinlabel $n$ at -6 150
\pinlabel $n$ at 192 150
\pinlabel $n$ at 408 150
\pinlabel $-1$ at 350 45
\pinlabel $-(p+1)^2$ at 192 05
\pinlabel $-1$ at 565 45
\pinlabel $-1$ at 424 05
\pinlabel $1$ at 385 110
\pinlabel $1$ at 385 93
\pinlabel $1$ at 385 65
\endlabellist
\centering
\includegraphics[scale=.75]{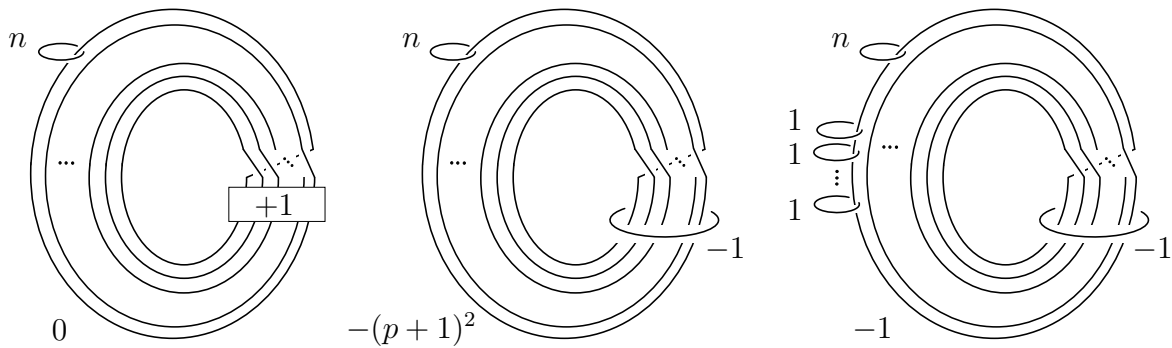}
\caption{Surgery diagrams for $\Sigma(p,p+1,p(p+1)n + 1)$.}\label{tks}
\end{figure}

\begin{proof}
We start by proving the bound $\emb(\Sigma(p,p+1,p(p+1)n + 1)) \leq (p+1)^2 + 1$. Now $\Sigma(p,p+1,p(p+1)n + 1) = S^3_{-1/n}(T_{p,p+1})$, and therefore we see a surgery diagram for these manifolds in the first picture of Figure \ref{tks},
where we have already performed a reverse slam dunk move (see Figure \ref{f:slamdunk}). Note that we draw $T_{p,p+1}$ so that there are $p+1$ strands, and hence we need to compensate for the
full right-handed twist in the $p+1$ strands by adding a $-\frac{1}{p+1}$-twist. Now the goal is to transform this surgery description into one where all the surgery coefficients are even (see~\cite{Kap} for approaches to this type of problem).
First we blow up the $p+1$ strands as in the second picture of Figure \ref{tks}. This changes the framing on our knot to $-(p+1)^2$, but now the knot is unknotted.
We blow up the knot $(p+1)^2-1$ times as in the third picture of Figure \ref{tks}. Now all the other components have odd framing, and have odd linking number with the knot (here we use the fact that $p+1$ is odd).
Hence we can blow down the knot to obtain a surgery diagram with $(p+1)^2 + 1$ even-framed components, and thus we achieve the upper bound on the embedding number by applying Theorem~\ref{all embed} as usual.

Now we consider the family $\Sigma(2,3,6n+1)$, for $n$ odd. Applying the upper bound we just obtained to the case $p=2$, we get $\emb(\Sigma(2,3,6n+1)) \leq 10$.
As in Proposition \ref{Poincare}, the fact that this each member of family has nontrivial Rokhlin invariant implies that $\emb(\Sigma(2,3,6n+1)) \geq 8$.
If for some odd $n$ we have $\emb(\Sigma(2,3,6n+1)) = 8$ or 9, then $\Sigma(2,3,6n+1)$ embeds into $\#_9 S^2 \times S^2$, splitting it into two spin pieces $U$ and $V$.
Then the intersection forms $Q_U$ and $Q_V$ are unimodular, even, and have signature $\pm8$. It then follows by the total rank that one form must be $\pm E_8$ and the other $\mp E_8 \oplus H$.
Hence $\Sigma(2,3,6n+1)$ bounds a non-standard even definite form, and we claim that this is impossible.
Indeed, Ozsv\'ath and Szab\'o computed the Heegaard Floer correction term $d(\Sigma(2,3,6n+1))=0$ \cite[Section 8.1]{OzsvathSzabo-absolutely}.
Suppose $\Sigma(2,3,6n+1)$ bounds a negative definite spin 4-manifold $W$ (if it bounds a positive definite spin 4-manifold we can reverse orientations and apply the same argument); \cite[Theorem 9.6]{OzsvathSzabo-absolutely} reads:
\[
c_1(\s)^2 + b_2(W) \le 4d(\Sigma(2,3,6n+1)) = 0.
\]
Since $W$ is even, $0$ is a characteristic vector in $H^2(W)$, and therefore it is the cohomology class of a \spinc structure $\s_0$ on $W$, and setting $\s=\s_0$ in the equation above shows $b_2(W)\le 0$.
\end{proof}

%\begin{question}
%Can we compute exactly the embedding numbers or at least provide lower bounds for the last family?
%\end{question}

We end this section by using work of Tange to give families of Brieskorn spheres whose embedding numbers grow arbitrarily large. 

\begin{proposition}
Let $\{M_n\}$ denote any one of the following families of Brieskorn spheres (as $n$ ranges over the positive integers):
\begin{itemize}
\item $\Sigma(4n-2,4n-1,8n-3)$,
\item $\Sigma(4n-1,4n,8n-1)$,
\item $\Sigma(4n-2,4n-1,8n^2-4n+1)$, or
\item $\Sigma(4n-1,4n,8n^2-1)$.
\end{itemize}
Then $\emb(M_n) \rightarrow \infty$ as $n \rightarrow \infty$.
\end{proposition}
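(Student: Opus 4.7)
The strategy will be to obtain a lower bound on $\emb(M_n)$ that diverges with $n$, by combining the splitting of $\#_k(S^2\times S^2)$ induced by an embedding with Tange's obstructions to spin fillings of $M_n$. Suppose $\emb(M_n) \le k$ and write the resulting decomposition as $\#_k(S^2\times S^2) = U_n \cup_{M_n} V_n$. Both pieces are spin codimension-zero submanifolds of a spin 4-manifold, and since $M_n$ is an integral homology sphere the Mayer--Vietoris argument used in the proof of Proposition~\ref{Zspheres} gives $b_2(U_n) + b_2(V_n) = 2k$ and $\sigma(U_n) + \sigma(V_n) = 0$. In particular $M_n$ bounds a spin 4-manifold $U_n$ satisfying $b_2(U_n) \le 2k$ and $|\sigma(U_n)| \le 2k$.

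Tange's paper exhibits, for each of the four families in the statement, explicit spin fillings $W_n$ of $-M_n$ (typically negative-definite plumbings arising from the Seifert-fibered structure) together with an application of Furuta's 10/8-theorem to closed spin unions of the form $U \cup_{M_n} W_n$. Applied to our $U_n$, his argument yields an inequality
\[
b_2(U_n) + b_2(W_n) \ge \tfrac{10}{8}\bigl|\sigma(U_n) + \sigma(W_n)\bigr| + 2.
\]
Substituting $b_2(U_n) \le 2k$ and $|\sigma(U_n)| \le 2k$ and rearranging turns this into a linear inequality $k \ge \phi_n$, where $\phi_n$ is a function of $b_2(W_n)$ and $\sigma(W_n)$. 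The key quantitative input is that Tange's fillings $W_n$ become arbitrarily "close to the 10/8-line" as $n \to \infty$, so that the deficit $\tfrac{10}{8}|\sigma(W_n)| - b_2(W_n)$ grows without bound, and hence $\phi_n \to \infty$.

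The main obstacle is exactly this last asymptotic verification: one must compute, for each of the four families, the signature and second Betti number of the chosen plumbing $W_n$ and check that the deficit diverges. The families behave slightly differently — the two $\Sigma(\cdot,\cdot,8n\pm c)$ cases grow linearly in $n$ while the two $\Sigma(\cdot,\cdot,8n^2\pm c)$ cases grow quadratically — so each is treated with its own application of the Brieskorn signature formula. Since these signature bookkeeping calculations are carried out in Tange's paper for precisely the four families listed, it suffices to invoke his estimates, from which the divergence of $\emb(M_n)$ follows immediately.
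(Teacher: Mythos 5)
Your proposal is correct and takes essentially the same approach as the paper: there, too, one glues a piece of the splitting of $\#_k S^2 \times S^2$ along $M_n$ to Tange's spin filling and applies the 10/8-Theorem, the quantitative input being precisely Tange's result that $M_n$ bounds a spin \emph{definite} 4-manifold $X_n$ with $b_2(X_n)=8n$, so the deficit $\tfrac{10}{8}|\sigma(X_n)|-b_2(X_n)=2n$ diverges exactly as your argument requires. The only slip is cosmetic: such definite fillings are not ``close to the 10/8-line'' but lie maximally below it, which is what makes the deficit grow; the paper packages the same estimate as a contradiction argument with a uniform bound $m$ and a suitably large index $a$ rather than as a direct lower bound.
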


\begin{proof}
Tange~\cite{Tange} showed that $M_n$ bounds a spin, definite 4-manifold $X_n$ with $b_2(X_n)= 8n$.
Now, by way of contradiction, suppose there is an $m >0$ such that $\emb(M_n) \leq m$ for all $n$. In particular we can choose $a > m$ such that $8a + 2m < 10a - \frac{10}{4}m +2$, 
and $M_a$ embeds in $\#_m S^2 \times S^2$,
splitting $\#_m S^2 \times S^2$ into two spin pieces $U$ and $V$, say, with $\partial V = \overline{M_a}$. Then $Z:= X_a \cup_{M_a} V$ is a closed, spin 4-manifold,
with $$b_2(Z) \leq 8a + 2m < 10a - \frac{10}{4}m +2 = \frac{10}{8}|8a-2m| + 2 \leq \frac{10}{8}|\sigma(Z)|+2.$$
But this contradicts the 10/8-Theorem~\cite{Furuta}.
\end{proof}

\end{section}
%%%%%%%%%%%%%%%%%%%%%%%%%%%%%%%%%%%%%%%%%%%%%%%%%%%%%%%%
%%%%%%%%%%%%%%%%%%%%%%%%%%%%%%%%%%%%%%%%%%%%%%%%%%%%%%%%
\begin{section}{Lens spaces}\label{lens}

In this section we study the embedding numbers of lens spaces.
We give partial results for lens spaces with small embedding number, and we also study the family $L(n,1)$.
In what follows, $L(p,q)$ will always denote the 3-manifold obtained as $-p/q$-surgery along the unknot, and we will assume $p>1$ (i.e. $L(p,q)$ is not the 3-sphere nor $S^1 \times S^2$) and $\gcd(p,q)=1$.
Recall that $L(p,q)$ is the double cover of $S^3$ branched over a 2-bridge link, which we denote by $K(p,q)$; namely, $L(p,q) = \Sigma(K(p,q))$. Moreover, $K(p,q)$ is a knot if $p$ is odd, and a 2-component link if $p$ is even. Recall also that $L(p,q)$ is (orientation-preserving) diffeomorphic to $L(p,q')$ if and only if $qq' \equiv 1 \pmod p$, and that $\overline{L(p,q)}= L(p,p-q)$.

We start by giving an upper bound.

\begin{proposition}\label{bad bound 2}
$\emb(L(p,q)) \le p-1$.
\end{proposition}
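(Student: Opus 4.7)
My plan is to realize $L(p,q)$ as a double branched cover and then apply Proposition~\ref{DBC} (or its natural link analog). Recall that $L(p,q) \cong \Sigma(K(p,q))$, where $K(p,q)$ is the 2-bridge knot for $p$ odd and the 2-component 2-bridge link for $p$ even.

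For $p$ odd, I would establish the classical bound $g(K(p,q)) \le (p-1)/2$ on the Seifert genus of the 2-bridge knot. One clean route: $K(p,q)$ is alternating, so by the theorem of Crowell and Murasugi the Seifert algorithm applied to a reduced alternating diagram produces a genus-minimizing Seifert surface, forcing $g(K(p,q)) = \tfrac{1}{2}\deg\Delta_{K(p,q)}$; and $\deg\Delta_{K(p,q)} \le p-1$ can be read off from the recursive computation of the Alexander polynomial via the continued fraction of $p/q$. Proposition~\ref{DBC} then immediately gives $\emb(L(p,q)) \le 2g(K(p,q)) \le p-1$.

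For $p$ even, $K(p,q)$ is a 2-component link and Proposition~\ref{DBC} needs a mild extension. The Akbulut--Kirby construction underlying Proposition~\ref{DBC} goes through verbatim for Seifert surfaces of links: from an orientable surface $F \subset S^3$ with $\partial F = K$, it produces a handle decomposition of $\Sigma(K)$ with one $0$-handle and $1-\chi(F)$ even-framed $2$-handles, so Theorem~\ref{all embed} yields $\emb(\Sigma(K)) \le 1-\chi(F)$. For the standard spanning surface of the 2-bridge link $K(p,q)$ coming from its 4-plat diagram one has $1-\chi(F) \le p-1$, completing the proof.

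The main obstacle is the verification of the bounds $g(K(p,q)) \le (p-1)/2$ and $1-\chi(F) \le p-1$, which reduces to standard but careful analysis of the continued fraction expansion of $p/q$. A more elementary-looking alternative --- exhibiting $L(p,q)$ as even-framed surgery on an at most $(p-1)$-component link directly --- is in fact more delicate than it first appears: for example, $L(11,7)$ admits no all-even negative continued fraction of length $\le p-1$ in either orientation, so any such direct construction would be forced to use mixed-sign entries or a non-chain link structure.
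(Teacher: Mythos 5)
Your argument is correct, but it takes a genuinely different route from the paper's. The paper never leaves the surgery/plumbing picture: it starts from the negative-definite chain given by $p/q=[a_1,\dots,a_n]^-$, makes every framing even by blowing up each component of a characteristic sublink $a_i-1$ times and then blowing it down, and bounds the number of components of the resulting even surgery link by $\sum_i(a_i-1)$, which an induction on the continued fraction shows is at most $p-1$; Theorem~\ref{all embed} then finishes. You instead pass through $L(p,q)=\Sigma(K(p,q))$ and the Akbulut--Kirby construction (Proposition~\ref{DBC} for $p$ odd, and its routine extension to connected Seifert surfaces of links for $p$ even --- an extension the paper in effect already uses for Milnor fibers in Section~\ref{Brieskorn}), reducing everything to the bound $b_1(F)\le p-1$ for a Seifert surface of the 2-bridge knot or link. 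That bound is true, and the cleanest way to close the step you flag is not a recursive computation of $\Delta$ from the continued fraction but the determinant: $K(p,q)$ is alternating, so by Crowell--Murasugi the Seifert-algorithm surface of a reduced alternating diagram realizes the genus and $b_1(F)$ equals the span of the Alexander polynomial, whose coefficients are all nonzero and alternate in sign; hence $p=\det K(p,q)=|\Delta(-1)|\ge b_1(F)+1$. As for trade-offs: your route is shorter modulo this classical alternating-link input and in fact yields the generally sharper bound $\emb(L(p,q))\le \deg\Delta_{K(p,q)}$, while the paper's proof is self-contained Kirby calculus and exhibits $L(p,q)$ explicitly as the boundary of a spin 2-handlebody (even surgery on an explicit link), which is in the spirit of the constructions used later for $L_n$. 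Concerning your closing caveat: it is accurate for all-negative even chains (the canonical expansion with all $a_i\ge2$ is unique, so e.g.\ neither $11/7$ nor $11/4$ has one with even entries), but mixed-sign even chains do exist here, e.g.\ $11/4=[2,-2,-2,-2]^-$, and the paper's remark after the proposition (Neumann--Raymond) says that up to reversing orientation such an all-even linear chain with at most $p-1$ spheres always exists --- harmless for $\emb$ by Proposition~\ref{p:generalproperties}(1) --- so the ``direct'' construction is in fact also viable.
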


\begin{proof}
Consider the linear plumbing $P$ associated to the continued fraction expansion $p/q = [a_1,\dots,a_n]^-$, namely:
\[
\xygraph{
!{<0cm,0cm>;<1cm,0cm>:<0cm,1cm>::}
!~-{@{-}@[|(2.5)]}
!{(0,0) }*+{\bullet}="x"
!{(1.5,0) }*+{\dots}="a1"
!{(3,0) }*+{\bullet}="a2"
!{(0,0.4) }*+{-a_1}
!{(3,0.4) }*+{-a_n}
"x"-"a1"
"a2"-"a1"
}
\] where each $a_i\geq2$.
This represent a surgery along a framed link $L$ that is a chain of unknots. Suppose $L'\subset L$ is a characteristic sublink (see \cite[Section 5.7]{GS}) with $\ell'$ components, indexed by the set $I'\subset \{1,\dots, n\}$.
For each $i\in I'$, we blow up the $i$-th component of $L$ $a_i-1$ times, and then we blow it down.
The resulting link $L''$ has even framing on each component, and presents $L(p,q)$ as the boundary of a spin 2-handlebody $W$.

We claim that $b_2(W)\le p-1$. Indeed, it is enough to count the number of components $\ell''$ of $L''$: they are $\ell'' = n+\sum_{i\in I'} (a_i-1) - \ell'$.

Since $a_i \ge 2$ for each $i$ we have:
\[
\ell'' = n - \ell' +\sum_{i\in I'} (a_i-1) = \sum_{i \not\in I'} 1 + \sum_{i \in I'} (a_i-1) \le \sum_{i=1}^n (a_i-1).
\]
We now prove by induction on $n$ that $\sum_{i=1}^n (a_i-1) \le p-1$.
This is obviously true when $n=1$, since in that case $a_1 = p$.
Suppose now $n>1$, and let $p = kq-r$ with $0<r<q$.
\[
\sum_{i=1}^n (a_i-1) = a_1-1 + \sum_{i=2}^n(a_i-1) = k-1+\sum_{i=2}^n(a_i-1) \le k-1+q-1,
\]
where the last inequality follows from the inductive step, and the observation that $q/r = [a_2,\dots,a_n]^-$.
Now $k-1+q-1 \le p-1$, since
\[
k-1+q \le p = kq-r \Longleftrightarrow r\le (k-1)(q-1),
\]
which is trivially true since $k\ge 2$ and $q-1\ge r$ by assumption.
\end{proof}

\begin{remark}
In fact, Neumann and Raymond show that, up to reversing the orientation, every lens space bounds a spin linear plumbing of spheres~\cite[Lemma 6.3]{NeumannRaymond}; furthermore, an easy induction shows that the number of spheres in the plumbing is at most $p-1$, giving an alternative proof of the proposition above.
\end{remark}
%\begin{proof}[Second proof]
%Alternatively, this follows from \cite[Lemma 6.3]{NeumannRaymond}, which asserts that whenever $pq$ is even, there exists a plumbing of disk bundles with even Euler number, whose boundary is $L(p,q)$.
%Moreover, all such framing can be assumed to be nonzero.

%Indeed, up to reversing the orientation, we can assume that the assumption is satisfied: if $p$ and $q$ were both odd, then $p-q$ would be even, and $L(p,p-q)$ would be the boundary of an even plumbing $P_e$ of $m$ spheres.

%Taking the double of $P_e$ gives an embedding $L(p,q)\hookrightarrow \#_m S^2\times S^2$.
%The lemma now follows from the fact that $m\le p-1$, which is easily proved by induction.\mMarco{I hope it is easily proved by induction, at least.}
%\end{proof}

%First, since $L(p,q)$ has cyclic and nontrivial fundamental group, 
Now Proposition~\ref{surgery prop}(1) shows that $\emb(L(p,q)) \ge 1$.
Moreover, it has been observed by Rasmussen that every lens space in Lisca's list\footnote{As observed by several authors, the case $\gcd(m,k)=2$ should be included in type (1) in the definition of $\mathcal{R}$ in that paper.}~\cite{Lisca-ribbon} bounds a rational homology ball that admits a handle decomposition with one handle of each index 0, 1, and 2 (see~\cite{LecuonaBaker}). Combining this directly with Theorem~\ref{t:one1-handle} and Theorem~\ref {t:oddH1} gives the following result.

\begin{theorem}\label{lens space ball}
Every lens space $L(p,q)$ with $p$ odd has $\emb(L(p,q)) = 1$ if and only if it bounds a rational homology ball, i.e. if and only if belongs to Lisca's list.
\end{theorem}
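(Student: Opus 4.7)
The plan is to assemble the theorem as a direct combination of the two results already in hand, namely Theorem~\ref{t:oddH1} and Theorem~\ref{t:one1-handle}, together with Lisca's classification \cite{Lisca-ribbon} and the Rasmussen--Lecuona--Baker observation \cite{LecuonaBaker} cited immediately before the statement. Since $p$ is assumed odd, $H_1(L(p,q)) \cong \Z/p\Z$ has odd order, so the hypotheses of both theorems are automatically satisfied. Moreover, because $p>1$ we have $\emb(L(p,q)) \ge 1$ by Proposition~\ref{surgery prop}(1), since a lens space with $|H_1| > 1$ has first homology which is not of the form $G \oplus G$ and hence does not embed in $S^4$. So in either direction it suffices to analyze when $\emb(L(p,q)) \le 1$.

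For the forward direction, I would suppose $\emb(L(p,q)) = 1$. Then $L(p,q)$ embeds in $S^2 \times S^2$, and Theorem~\ref{t:oddH1} applies directly (using the odd-order hypothesis on $H_1$) to conclude that $L(p,q)$ bounds a rational homology ball. By Lisca's classification, the only lens spaces bounding rational homology balls are precisely those belonging to his list, and therefore $L(p,q)$ lies on this list.

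For the backward direction, I would assume $L(p,q)$ appears on Lisca's list. The observation of Rasmussen, recorded in \cite{LecuonaBaker}, is exactly that every such lens space bounds a rational homology ball admitting a handle decomposition with a single handle of each index $0$, $1$, and $2$. Theorem~\ref{t:one1-handle} then immediately gives $\emb(L(p,q)) \le 1$, which combined with the lower bound above yields equality.

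There is essentially no obstacle here beyond verifying that the cited results apply as stated; the real content was proved in Theorem~\ref{t:oddH1} and Theorem~\ref{t:one1-handle}. The only small subtlety to double-check is the footnote in the text reminding the reader that Lisca's list should be understood as corrected to include the case $\gcd(m,k)=2$ in type (1), so that the equivalence between "bounds a rational homology ball" and "belongs to Lisca's list" is literally correct for all odd $p$.
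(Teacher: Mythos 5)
Your proposal is correct and follows essentially the same route as the paper, which derives the theorem by combining Theorem~\ref{t:oddH1}, Theorem~\ref{t:one1-handle}, Lisca's classification, and the Rasmussen observation recorded in \cite{LecuonaBaker}, with the lower bound $\emb(L(p,q))\ge 1$ from Proposition~\ref{surgery prop}(1). Nothing further is needed.
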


This naturally leads to considering which lens spaces with $p$ even have embedding number 1.
For example, $L(pq\pm1, \mp q^2)$ with $pq$ odd has embedding number 1, since it is $(pq\pm1)$-surgery along $T_{p,q}$~\cite{Moser}.
More generally, the Berge conjecture provides more lens spaces for which the embedding number is 1: the classification of lens spaces that arise as surgery along knots in the 3-sphere has been solved by Greene~\cite{Greene}.

\begin{question}
Are all lens spaces with embedding number 1 either in Lisca's list or in Greene's list?
\end{question}

We begin the study of this problem by generalizing Theorem~\ref{t:oddH1} for some branched double cover rational homology spheres (and in particular for lens spaces).

\begin{proposition}\label{p:spin-link}
%Let $L$ be a link in $S^3$ with $\ell$ components, and suppose that the branched double cover $Y = \Sigma(L)$ of $L\subset S^3$ is a rational homology sphere. Then, if $Y$ embeds in a spin $4$-manifold $X$ such that $X$ is split into two components $X_1$ and $X_2$, then $b_2(X_i) \equiv \ell+1 \pmod 2$.
Let $L$ be a link in $S^3$ with $\ell$ components, whose branched double cover $\Sigma(L)$ is a rational homology sphere. If $\Sigma(L)$ bounds a spin $4$-manifold $W$, $b_2(W) \equiv \ell+1 \pmod 2$.
\end{proposition}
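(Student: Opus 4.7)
The plan is to compare the given spin filling $W$ to a reference spin filling whose second Betti number manifestly has the right parity. To that end, pick a Seifert surface $F$ for $L$ of genus $g$, so $b_1(F)=2g+\ell-1$, and let $X_F$ be the double cover of $B^4$ branched over a push-in of $F$. By the Akbulut--Kirby algorithm, $X_F$ is a spin $4$-manifold with $\partial X_F=\Sigma(L)$ that admits a handle decomposition with one $0$-handle and $b_1(F)$ even-framed $2$-handles. In particular $b_2(X_F)=2g+\ell-1\equiv\ell+1\pmod 2$, so the proposition reduces to the claim that $b_2(W)\equiv b_2(X_F)\pmod 2$ for every spin filling $W$.

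Form the closed oriented $4$-manifold $Z=W\cup_{\Sigma(L)}(-X_F)$. Since $\Sigma(L)$ is a rational homology sphere, Novikov additivity gives $\sigma(Z)=\sigma(W)-\sigma(X_F)$, and Mayer--Vietoris gives $b_2(Z)=b_2(W)+b_2(X_F)$. Since $b_2\equiv\sigma\pmod 2$ for any oriented $4$-manifold (as $b_2-\sigma=2b_2^-$), it is enough to prove $\sigma(Z)\equiv 0\pmod 2$.

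I will deduce this from the fact that $w_2(Z)^2=0$. For a closed oriented $4$-manifold, Wu's relation gives $w_4=w_2^2$, and $w_4$ evaluated on the fundamental class is $\chi(Z)\pmod 2$; combined with $\chi(Z)\equiv b_2(Z)\equiv\sigma(Z)\pmod 2$, this yields $\sigma(Z)\equiv w_2(Z)^2\pmod 2$. Because $W$ and $X_F$ are both spin, $w_2(Z)$ restricts to zero on each piece; the Mayer--Vietoris sequence then expresses $w_2(Z)$ as $\delta(\alpha)$ for some $\alpha\in H^1(\Sigma(L);\mathbb{Z}/2)$, where $\delta\colon H^1(\Sigma(L);\mathbb{Z}/2)\to H^2(Z;\mathbb{Z}/2)$ is the connecting map.

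The decisive geometric observation is that $\Sigma(L)\subset Z$ is an oriented codimension-one submanifold, so its normal bundle in $Z$ is a trivial real line bundle, and the Poincar\'e dual $\mathrm{PD}_Z(\delta(\alpha))\in H_2(Z;\mathbb{Z}/2)$ is represented by a surface $S\subset\Sigma(L)\subset Z$ --- the image of a surface in $\Sigma(L)$ representing $\mathrm{PD}_{\Sigma(L)}(\alpha)$. Pushing $S$ off itself in the normal direction to $\Sigma(L)$ produces a disjoint copy, so $[S]\cdot[S]=0$ in $H_0(Z;\mathbb{Z}/2)$ and therefore $w_2(Z)^2=0$. The step requiring the most care in the write-up is the identification of $\mathrm{PD}_Z(\delta(\alpha))$ with a push-in surface from $\Sigma(L)$, which follows from the fact that $\delta(\alpha)$ admits a cocycle representative supported in a collar of $\Sigma(L)$.
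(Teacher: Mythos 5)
Your argument is correct, but it takes a genuinely different route from the paper. The paper proves the proposition by matching spin structures: it invokes Turaev's correspondence between orientations of $L$ and spin structures on $\Sigma(L)$, together with Donald--Owens' identification of $\t_o$ with the restriction of the spin structure on the double cover of $B^4$ branched over a pushed-in Seifert surface, so as to produce a spin filling $W_\t$ inducing the \emph{same} spin structure as $W$; gluing $W_\t$ to $-W$ then gives a closed \emph{spin} $4$-manifold, whose signature is even, and the parity count $\sigma(L,o)\equiv b_1(F_o)\equiv \ell+1 \pmod 2$ (using nondegeneracy of the Seifert form, since $\det L\neq 0$) finishes the argument. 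You bypass the spin-structure bookkeeping entirely: you glue an arbitrary Seifert-surface filling $X_F$ to $-W$, accept that $Z$ need not be spin, and instead localize $w_2(Z)$ on the separating rational homology sphere via Mayer--Vietoris, concluding $w_2(Z)^2=0$ (push-off in the trivial normal direction of $\Sigma(L)$; equivalently, one can argue algebraically that $\delta(\alpha)\cup\delta(\alpha)=\delta\bigl(\alpha\cup \delta(\alpha)|_{\Sigma(L)}\bigr)=0$ since $\delta(\alpha)$ restricts trivially to $\Sigma(L)$), whence $\sigma(Z)\equiv w_2(Z)^2\equiv 0\pmod 2$ by Wu/van der Blij. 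What the paper's approach buys is brevity given the cited results and finer information (it identifies which spin structure on $\Sigma(L)$ each filling induces, which is used elsewhere in that section); what your approach buys is independence from Turaev and Donald--Owens and a slightly stronger conclusion, namely that any two spin fillings of a fixed rational homology sphere have signatures (hence, by nondegeneracy, second Betti numbers) of the same parity, regardless of whether the induced boundary spin structures agree. The only points deserving care in a full write-up are the ones you flag: fixing an orientation of $L$ to define $F$, and the standard commutation of the Mayer--Vietoris connecting map with Poincar\'e duality identifying $\mathrm{PD}_Z(\delta(\alpha))$ with the image of $\mathrm{PD}_{\Sigma(L)}(\alpha)$ under inclusion; both are routine.
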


First of all, let us observe that the proposition does indeed generalize Theorem~\ref{t:oddH1} in the case of lens spaces.
Indeed, when $p$ is odd, $L(p,q)$ is the branched double cover of a knot (i.e. $\ell = 1$ in the proposition above), and every embedding of $L(p,q)$ in $S^2\times S^2$ splits the $4$-manifold into two connected components, each with even $b_2$ (namely, $0$ and $2$).
On the other hand, when $p$ is even, we have the following corollaries.

\begin{corollary}
If $p$ is even, $L(p,q)$ bounds no spin rational homology ball.
\end{corollary}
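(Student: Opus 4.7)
The plan is to apply Proposition~\ref{p:spin-link} directly, using the key fact recalled at the start of the section: the 2-bridge link $K(p,q)$ has $\ell=2$ components precisely when $p$ is even, and $L(p,q) = \Sigma(K(p,q))$ is its branched double cover.

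First I would argue by contradiction: suppose $L(p,q)$ bounds a spin rational homology ball $W$. Since $W$ is a rational homology ball we have $b_2(W)=0$, so $b_2(W) \equiv 0 \pmod 2$. On the other hand, applying Proposition~\ref{p:spin-link} to $L = K(p,q)$ (which has $\ell=2$ components since $p$ is even) gives
\[
b_2(W) \equiv \ell + 1 \equiv 3 \equiv 1 \pmod 2,
\]
a contradiction. Hence no such $W$ exists.

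There is essentially no obstacle here once Proposition~\ref{p:spin-link} is in hand; the only thing to check is that $L(p,q)$ does bound \emph{some} spin 4-manifold in order for the proposition to apply, but this is automatic from the hypothesis that we are assuming a spin rational homology ball exists. The whole content of the corollary is the parity mismatch between $b_2 = 0$ for a rational homology ball and the parity $\ell+1 = 3$ forced by the two components of the branch set.
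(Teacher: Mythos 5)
Your argument is correct and is exactly the paper's proof: apply Proposition~\ref{p:spin-link} with $L = K(p,q)$, a 2-component link since $p$ is even, so every spin filling of $L(p,q)$ has odd $b_2$, contradicting $b_2 = 0$ for a rational homology ball.
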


\begin{proof}
Indeed, when $p$ is even, $L(p,q)$ is the double cover of $S^3$ branched over the 2-component link $K(p,q)$, and therefore every spin filling of $L(p,q)$ has odd $b_2$.
\end{proof}

%\begin{proof}
%Let $Y = L(p,q)$, and suppose that $(Y,\t) = \partial (W,\s)$, where $(W,\s)$ is a spin rational homology ball; it follows that $p=4m^2$.
%Recall that, since $H^1(Y;\Z/2\Z) = \Z/2\Z$, $Y$ has two spin structures; we claim that the other spin structure $\t'$ on $Y$ is the restriction of a \spinc structure on $W$.
%Since the set of \spinc structures on $Y$ that extend to $W$ is acted upon by the subgroup of index $2m$ in $H^2(Y)$, while the two spin structures in $\Spinc(Y)$ are acted upon by the index $2m^2$ subgroup of $H^2(Y)$, it follows that both spin structures extend to $W$.
%
%Moreover, since $K(p,q)$ is a $2$-component link, $\ell = 2$ in the proposition above.
%Gluing any spin filling of $-Y$ onto $W$ yields a closed spin $4$-manifold that is divided into two parts with even $b_2$, thus contradicting the proposition.
%\end{proof}

\begin{corollary}
If $p$ is even and $\emb(L(p,q)) = 1$, every embedding of $L(p,q)$ into $S^2\times S^2$ splits $S^2\times S^2$ into two spin $4$-manifolds $X_1$, $X_2$ with $b_2(X_i)=1$, $b_1(X_i) = b_3(X_i) = 0$.
\end{corollary}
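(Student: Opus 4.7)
The plan is to combine a Mayer--Vietoris computation with Proposition~\ref{p:spin-link}. An embedding of $L(p,q)$ in $S^2\times S^2$ separates the ambient manifold (since $H_3(S^2\times S^2;\Z/2\Z)=0$) into two connected codimension-$0$ submanifolds $X_1,X_2$, each inheriting a spin structure by restriction of the ambient spin structure.

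The key step is to apply Proposition~\ref{p:spin-link} to both $X_1$ and $X_2$. When $p$ is even, $L(p,q)=\Sigma(K(p,q))$ with the link $K(p,q)$ having $\ell=2$ components, so the proposition forces $b_2(X_i)$ to be odd for each $i$. (One of the $X_i$ bounds $\overline{L(p,q)}=L(p,p-q)$ with the reversed orientation, but this is still the branched double cover of a two-component link, and $b_2$ is orientation-independent.) On the other hand, Mayer--Vietoris over $\Q$, using $H_1(L(p,q);\Q)=H_2(L(p,q);\Q)=0$, gives an isomorphism
\[
H_2(X_1;\Q)\oplus H_2(X_2;\Q) \cong H_2(S^2\times S^2;\Q) = \Q^2,
\]
so $b_2(X_1)+b_2(X_2)=2$. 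Combining with the odd-parity constraint forces $b_2(X_i)=1$.

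To finish, the vanishing $b_1(X_i)=0$ follows from a lower portion of the same Mayer--Vietoris sequence, using $H_1(S^2\times S^2;\Q)=0$ and the injectivity of the map $H_0(L(p,q);\Q)\to H_0(X_1;\Q)\oplus H_0(X_2;\Q)$. Then $b_3(X_i)=0$ is obtained via Poincar\'e--Lefschetz duality, $H_3(X_i;\Q)\cong H^1(X_i,\partial X_i;\Q)$, combined with the cohomology long exact sequence of the pair, using $b_1(X_i)=0$ and the isomorphism $H^0(X_i)\cong H^0(\partial X_i)$ induced by connectedness of $X_i$ and $\partial X_i$.

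I expect no serious obstacle: the argument is a direct application of Proposition~\ref{p:spin-link} combined with routine Mayer--Vietoris and duality. The only care needed is to verify that Proposition~\ref{p:spin-link} applies to both pieces simultaneously, which is automatic since $b_2$ is orientation-independent and both orientations of $L(p,q)$ are branched double covers of two-component links.
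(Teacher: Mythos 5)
Your argument is correct and follows essentially the same route as the paper: apply Proposition~\ref{p:spin-link} with $\ell=2$ to force $b_2(X_i)$ odd, and use Mayer--Vietoris to get $b_1(X_i)=b_3(X_i)=0$ and $b_2(X_1)+b_2(X_2)=2$, hence $b_2(X_i)=1$. Your extra remarks (orientation-reversal of one piece, duality for $b_3$) are just more explicit versions of steps the paper leaves implicit.
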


\begin{proof}
As remarked above, $L(p,q)$ is the branched double cover of a $2$-component link, hence $\ell = 2$ in the proposition above, and therefore $b_2(X_i)$ is odd.
Looking at the Mayer--Vietoris sequence, we have that $b_1(X_i) = b_3(X_i) = 0$; since $b_2(S^2\times S^2) = 2$ and the $b_2(X_i)$ are odd, we have $b_2(X_1) = b_2(X_2) = 1$, hence the corollary follows.
\end{proof}

\begin{remark}
The classification of which lens spaces bound a $4$-manifold $X_1$ as in the corollary above is still an open question.
Observe also that, if we restrict our attention to lens spaces $L(p,q)$ with squarefree, even $p$, there is a further restriction on the homology of $X_1$; namely, $H_1(X_1) = H_3(X_1) = 0$, and $H_2(X_1) = \Z$.
However, this restriction is not sufficient to allow for a complete classification, either.
\end{remark}

\begin{proof}[Proof of Proposition~\ref{p:spin-link}]
By restriction $\Sigma(L)$ inherits a spin structure $\t$ from $W$.
Turaev~\cite[Section 2.2]{Turaev} has shown that to each orientation $o$ on $L$ one can associate a spin structure $\t_o$ on $\Sigma(L)$, and Donald and Owens~\cite[Proposition 3.3]{DonaldOwens} gave the following interpretation of $\t_o$.
Fix a Seifert surface for the oriented link $(L,o)$, and push it into the 4-ball, obtaining a surface $F_o$; the branched double cover $\Sigma(B^4,F_o)$ admits a spin structure $\s_{F_o}$ (the pull-back of the spin structure on $B^4$), and $\t_o$ is the restriction of $\s_{F_o}$ to $\Sigma(L)$.

Recall that $\sigma(L,o)$ is the signature of the double cover $\Sigma(B^4,F_o)$.

In the case at hand, since $\Sigma(L)$ is a rational homology sphere, $\det L\neq0$, and the Seifert form of $F_o$ is nondegenerate for every choice of orientation $o$ and every choice of Seifert surface $F_o$.
It follows that $\sigma(L,o) \equiv b_1(F_o) \pmod 2$, and since $b_1(F_o) \equiv \ell+1 \pmod 2$, $\sigma(L,o) \equiv \ell+1 \pmod 2$.

Summing up, for each spin structure on $\Sigma(L)$ we constructed a spin filling, whose signature is congruent to $\ell+1$ modulo 2.
In particular, we have a spin filling $W_\t$ of $(Y,\t)$. We can glue $W_\t$ and $-W$ along $Y$, and we obtain a closed spin $4$-manifold $X$.
Thus, since closed spin 4-manifolds have even signature, $\sigma(W_\t) - \sigma(W) = \sigma(X) \equiv 0 \pmod 2$, from which the result follows.
%Since $\Sigma(L)$ is a rational homology sphere, $\det L \neq 0$, and in particular any Seifert form of $L$ is nondegenerate.
%It follows that $\sigma(L) \equiv \chi(F) \pmod 2$ for any Seifert surface for $L$. Since $\chi(F) \equiv \ell+1 \pmod 2$, $\sigma(L) \equiv \ell+1 \pmod 2$, too.
%
%Recall now that $\sigma(L)$ is the signature of the double cover $\Sigma(F')$ of $B^4$, branched over a push-off $F'$ of a Seifert surface $F$ in $B^4$.
%By pulling back the spin structure on $B^4$, we obtain a spin structure on $\Sigma(F')$.
%
%Notice that the surface $F$ depends on the orientation of $L$, and changing the orientation of $L$ in all possible ways induces all spin structures on $\Sigma(L)$~\cite[Section 2.2]{Turaev}.
%It follows that for each spin structure on $\Sigma(L)$ we constructed a spin filling, whose signature is congruent to $\ell+1$ modulo 2.
%
%In particular, if $\partial X_2 = -Y$, $Y$ inherits a spin structure that we can fill with one of the above constructed manifolds, and the resulting closed $4$-manifold $X'$ is spin.
%Thus, $\sigma(\Sigma(F')) + \sigma(X_2) = \sigma(X') \equiv 0 \pmod 2$, from which the result follows.
\end{proof}

\begin{subsection}{The family $L(n,1)$}

We now focus on lens spaces of the form $L(n,1)$. For convenience we work with the opposite orientation, $\overline{L(n,1)}=L(n,n-1)$, and so let $L_n$ denote the lens space $L(n,n-1)$. 
According to Proposition~\ref{surgery prop}, $L_{2n}$ has embedding number 1.
However, we can refine the notion of the embedding number by requiring that the restriction of the unique spin structure on $\#_nS^2\times S^2$ induce a given spin structure on the 3-manifold.
In particular, the embedding of $L_{2n}$ in $S^2\times S^2$ realizes the restriction of the unique spin structure on the 2-handlebody obtained by attaching a 2-handle to $B^4$ along the unknot, with framing $2n$.
We can ask what happens with the other spin structure on $L_{2n}$, which is the restriction of the spin structure on the plumbing $P_{2n}$ of a chain of $2n-1$ spheres with Euler number $-2$.
Observe also that $L_{2n+1}$ is the boundary of the plumbing $P_{2n+1}$, and restricting the spin structure on $P_{2n+1}$ gives the unique spin structure on $L_{2n+1}$.

For the remainder of this discussion we consider $L_n$, implicitly equipped with the spin structure described above. %In this case Proposition~\ref{p:spin-link} implies that $\emb(L_n)$ must have the opposite parity of $n$. 
Proposition~\ref{bad bound 2} states that $\emb(L_n) \leq n-1$. Indeed, we can see this directly since $L_n$ is the boundary of the linear plumbing $P_n$ (the induced surgery diagram is a chain of $n-1$ unknotted components, all with framing $-2$).

We will see below that this upper bound can be improved upon in many cases. But first we use the 10/8-Theorem to show that the embedding numbers of these lens spaces grow arbitrarily large. Recall from the introduction that Edmonds~\cite{Edm} proved that every lens space embeds topologically locally flatly in $\#_4 S^2 \times S^2$, and hence we see that these embedding numbers reflect the sharp contrast between the smooth and topological categories of 4-manifolds.

In what follows, we will repeatedly use Novikov additivity. More precisely, whenever we have a splitting of $\#_nS^2\times S^2$ into $X$ and $Y$ along a 3-manifold, we have $\sigma(X) + \sigma(Y) = \sigma(\#_nS^2\times S^2) = 0$; that is, $\sigma(X) = -\sigma(Y)$.

\begin{proposition}\label{bad lower bound}
$\emb(L_n) \geq \frac19 (n+7)$.
Additionally, if the $11/8$-Conjecture is true, then $\emb(L_n) \geq \frac{3}{19}(n-1)$.
\end{proposition}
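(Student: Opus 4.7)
The plan is to glue the negative-definite spin plumbing $P_n$ onto each side of a hypothetical embedding and apply the $10/8$-Theorem (or the $11/8$-Conjecture) to the two resulting closed spin $4$-manifolds. So suppose $L_n$ embeds in $\#_m S^2\times S^2$ compatibly with the spin structure described just before the proposition, and write the complement as $X_1\cup_{L_n} X_2$ with $\partial X_1=L_n$. Because the spin structures on $L_n$ induced from $X_1$, $X_2$, and $P_n$ all agree, the closed $4$-manifolds
\[
Z_1 := X_1 \cup_{L_n} (-P_n), \qquad Z_2 := X_2 \cup_{L_n} P_n
\]
are both spin.

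Next I would compute their invariants. The plumbing $P_n$ is negative definite with $b_2(P_n)=n-1$, and by Novikov additivity $\sigma(X_1)+\sigma(X_2)=0$; setting $s := \sigma(X_1)$ one finds $\sigma(Z_1)=s+(n-1)$, $\sigma(Z_2)=-(s+n-1)$, and $b_2(Z_i)=b_2(X_i)+(n-1)$. Since $L_n$ is a rational homology sphere, Mayer--Vietoris over $\Q$ gives $b_2(X_1)+b_2(X_2)=2m$, and the inequality $|\sigma(X_i)|\le b_2(X_i)$ applied to both pieces forces $|s|\le m$.

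I would then apply Furuta's $10/8$-Theorem to each $Z_i$ and sum the two inequalities to obtain
\[
2m+2(n-1) \;=\; b_2(Z_1)+b_2(Z_2) \;\ge\; \tfrac{10}{4}|s+n-1|+4.
\]
The case $m\ge n-1$ already gives $m\ge (n+7)/9$ for $n\ge 2$, so assume $m<n-1$. Then $|s|\le m<n-1$ forces $s+n-1\ge (n-1)-m>0$, so both $\sigma(Z_i)$ are nonzero (allowing the $+2$ in Furuta's inequality) and $|s+n-1|\ge (n-1)-m$; substituting and clearing denominators yields $9m\ge n+7$. The same calculation with $11/8$ in place of $10/8$ (and no $+2$) gives $2m+2(n-1)\ge \tfrac{11}{4}((n-1)-m)$, which rearranges to $19m\ge 3(n-1)$.

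The main subtlety is the verification that the $Z_i$ are spin, which is exactly why one must work with the refined embedding number attached to the spin structure on $L_n$ coming from $P_n$; once this compatibility is granted, the remainder is routine signature bookkeeping combined with the Furuta inequality.
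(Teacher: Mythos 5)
Your proposal is correct and follows essentially the same route as the paper: cap the two complementary pieces of the hypothetical embedding with $\pm P_n$, observe the resulting closed manifolds are spin (using the refined, spin-structure-respecting embedding number), and apply the $10/8$-Theorem, respectively the $11/8$-Conjecture. The only difference is bookkeeping: the paper bounds $b_2$ of each piece separately via $\sigma(Z)\le b_2(Z)$ and averages, while you sum the two Furuta inequalities and use $|\sigma(X_1)|\le m$ with a case split, which yields the same bounds.
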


\begin{proof}
As noted above, $L_n = \partial P_n$, where the latter is a spin $4$-manifold with $\sigma(P_n) = 1-n$.

Suppose $\emb(L_n) = m$, and fix an embedding of $L_n$ into $\#_{m}S^2\times S^2$. Note we can assume that $m\leq n-1$ by Proposition \ref{bad bound 2}.
Let $Z$ be the connected component of the complement of this embedding such that $\partial Z = -L_n$.
Consider the closed, spin $4$-manifold $X := P_n \cup_{L_n} Z$: since $L_n$ is a rational homology sphere, $b_2(X) = n-1+b_2(Z)$ and $\sigma(X) = 1-n+\sigma(Z)$.
Observe that, by definition, $\sigma(Z) \le b_2(Z)$.

By the 10/8-Theorem~\cite{Furuta},
\begin{equation}\label{e:furuta}
n-1 + b_2(Z) = b_2(X) \ge \frac54|\sigma(X)| + 2 = \frac54|1-n+\sigma(Z)| + 2= \frac54(n-1-\sigma(Z))+2,
\end{equation}
and in particular $7-4n-4b_2(Z) \le 5\sigma(Z)-5n \le 5b_2(Z)-5n$, from which we obtain $b_2(Z) \geq \frac19n +\frac79$.

Let $Z' = \#_{m}S^2\times S^2 \setminus Z$. By Novikov additivity, $\sigma(Z') = -\sigma(Z)$, and, by gluing $-P_n$ onto $Z'$, the same manipulations as above give the inequality
%The same inequality holds for the component $Z' = \#_{m}S^2\times S^2 \setminus Z$, hence
\[
m = \frac12 b_2(\#_{m}S^2\times S^2) = \frac12(b_2(Z) + b_2(Z')) \geq \frac19n+\frac79,
\]
as desired.

Assuming the 11/8-Conjecture, instead of~\eqref{e:furuta} we have
\[
n-1 + b_2(Z) = b_2(X) \ge \frac{11}{8}|\sigma(X)| = \frac{11}{8}|1-n+\sigma(Z)|,
\]
from which one readily obtains $3n-8b_2(Z) \le 11\sigma(Z)+3$, from which the desired bound follows as before.
\end{proof}

Now we show two relations between $\emb(L_n)$: one is a form of subadditivity, and the other asserts that $\emb(L_{n-1})$ gives tight restrictions on $\emb(L_n)$; in particular, the values can differ by at most 1.

\begin{theorem}\label{steps}
We have:
\begin{enumerate}
\item $\emb(L_{m+n}) \le \emb(L_m)+\emb(L_n)+1$;
\item $\emb(L_{n-1})-1 \leq \emb(L_{n}) \leq \emb(L_{n-1}) + 1$.
\end{enumerate}
\end{theorem}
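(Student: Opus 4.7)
The strategy for both parts is the same: realize a 2-handle cobordism between the relevant 3-manifolds inside an additional $S^2\times S^2$ summand, providing the ``$+1$'' in each bound.

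For Part~(1), the key observation is that the linear plumbing $P_{m+n}$ of $m+n-1$ spheres of self-intersection $-2$ decomposes as $P_m\natural P_n\cup h$, where $h$ is a single $(-2)$-framed 2-handle attached along a curve $\gamma\subset\partial(P_m\natural P_n)=L_m\#L_n$. Concretely, $\gamma$ is the band sum, across the connecting 2-sphere of the connected sum, of a meridian of the end 2-handle of $P_m$ with a meridian of the end 2-handle of $P_n$. Starting from embeddings $L_m\subset\#_aS^2\times S^2$ and $L_n\subset\#_bS^2\times S^2$, Proposition~\ref{p:generalproperties}(2) places $L_m\#L_n$ in $\#_{a+b}S^2\times S^2$; connect-summing with one more copy of $S^2\times S^2$ yields $L_m\#L_n\subset\#_{a+b+1}S^2\times S^2$. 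I will realize $h$ ambiently by constructing an embedded $(-2)$-framed disk $D$ bounded by $\gamma$ in the complement of $L_m\#L_n$ and performing ambient surgery along $D$. The disk $D$ is built by taking any null-homotopy of $\gamma$ in $\#_{a+b+1}S^2\times S^2$ and tubing with spheres in the extra $S^2\times S^2$ summand; since the classes $p[S^2\times\mathrm{pt}]+q[\mathrm{pt}\times S^2]$ realize all even self-intersections $2pq$, this adjusts the framing by any even integer, in particular reaching $-2$.

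The upper bound in Part~(2) is immediate from Part~(1) with $m=1$, using $L_1=S^3$ and $\emb(L_1)=0$. For the lower bound, the dual construction applies: $L_{n-1}$ is recovered from $L_n$ by ambient $0$-surgery on the curve $\mu_{n-1}\subset L_n$, the meridian of the final 2-handle of $P_n$. Writing $P_n=P_{n-1}\cup h_{n-1}$ with $h_{n-1}$ a $(-2)$-framed 2-handle, the cocore of $h_{n-1}$ is a disk in $P_n$ bounded by $\mu_{n-1}$, and ambient surgery along this cocore detaches $h_{n-1}$, replacing $L_n=\partial P_n$ with $L_{n-1}=\partial P_{n-1}$; a short determinant calculation with the associated surgery matrix confirms that the corresponding coefficient on $\mu_{n-1}$ is $0$. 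Given $L_n\subset\#_kS^2\times S^2$, I pass to $\#_{k+1}S^2\times S^2$ and construct the required $0$-framed disk bounded by $\mu_{n-1}$ in the extra summand, exactly as in Part~(1).

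The main obstacle in both parts is verifying the parity of the available disk framings. Since $\#_{k+1}S^2\times S^2$ is spin, the framings of embedded disks bounding a fixed curve form a coset of $2\Z$, so the tubing adjustment reaches $-2$ (respectively $0$) precisely when this coset equals $2\Z$. Comparison with the $(-2)$-framed core of $h$ in $P_{m+n}$ (respectively the $0$-framed cocore of $h_{n-1}$ in $P_n$) reduces the parity condition to a compatibility between the spin structure on $L_m\#L_n$ (respectively $L_n$) induced by the given embedding and the one coming from the plumbing. This compatibility is automatic whenever the relevant first homology has odd order, and otherwise is handled either by a case analysis of spin structures or by replacing $\gamma$ (respectively $\mu_{n-1}$) with another curve in the same homology class whose parity matches.
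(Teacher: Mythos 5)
There is a genuine gap at the heart of both parts: you never justify the existence of the embedded, correctly framed disk in the \emph{complement} of the embedded 3-manifold, and this is exactly the hard point. Your plan is to take ``any null-homotopy of $\gamma$'' in $\#_{a+b+1}S^2\times S^2$ and tube with spheres in the extra summand. But (i) a null-homotopy is only an immersed disk, and to perform ambient surgery you need an embedded one; (ii) more seriously, its interior will in general intersect the separating 3-manifold $L_m\#L_n$ (respectively $L_n$), and tubing into spheres that live on one side of the splitting does nothing to remove those intersections; and (iii) the curve $\gamma$ (a band sum of meridians, hence a nontrivial element of $H_1(L_m\#L_n)$) need not even be null-homotopic, or null-homologous, in the complement component where you want the disk to lie, and adding an $S^2\times S^2$ summand does not change the fundamental group of that component. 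Your parity discussion at the end does not repair this: the framing coset is the least of the problems, and the proposed fix of ``replacing $\gamma$ with another curve in the same homology class'' would change the result of the surgery, so you would no longer be producing $L_{m+n}$ (or $L_{n-1}$). Note also that the theorem is about the \emph{spin-refined} embedding number for even indices, so any construction must track the induced spin structures, which your argument only gestures at.

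The paper sidesteps the whole issue of finding a disk in a given complement. Instead of surgering inside the existing embedding, it inserts a self-cobordism of $L$ ($=L_m\#L_n$ or $L_n$) into the splitting $\#_aS^2\times S^2=X\cup_L Y$: the cobordism $W_0$ (resp.\ $W_1$, $W_2$) is $L\times I$ with the desired 2-handle $h$ \emph{together with} a 0-framed 2-handle $b$ attached along a meridian of $h$ (Figures~\ref{cobordism0}--\ref{cobordism2}). The extra handle $b$ guarantees that the upper boundary of the cobordism is again $L$, so $Z_i=X\cup W_i\cup Y$ makes sense with no transversality or $\pi_1$ hypotheses, while the middle level of $W_i$ visibly contains $L_{m+n}$ (resp.\ $L_{n-1}$ after a slam dunk, resp.\ $L_{n+1}$). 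The key point you are missing is Lemma~\ref{l:trivialcob}: surgering $b$ to a 1-handle shows $W_i$ is $L\times I$ surged along an $S^1\times D^3$, so inserting it into the simply connected closed manifold is a connected sum with an $S^2$-bundle over $S^2$, and spinness of $W_i$ forces the untwisted bundle, giving $Z_i\cong\#_{a+1}S^2\times S^2$. That is where the ``$+1$'' really comes from, and it also automatically handles the spin-structure compatibility you tried to address by hand. To make your approach work you would need to prove the existence of the required embedded framed disk in the complement of an arbitrary embedding, which is not available.
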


We first pause and observe a consequence of (1) in the theorem above.

\begin{corollary}
The sequence $\emb(L_n)/n$ converges to a limit $\emb_L\in \mathbb{R}$.
Moreover, $\emb_L\ge \frac19$, and, if the $11/8$-Conjecture holds, $\emb_L \ge \frac3{19}$.
\end{corollary}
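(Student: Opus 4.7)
The plan is to turn Theorem~\ref{steps}(1) into a genuine subadditivity statement and then invoke Fekete's lemma. The inequality $\emb(L_{m+n}) \le \emb(L_m)+\emb(L_n)+1$ is not subadditive on the nose because of the extra $+1$, so first I would absorb this constant by setting
\[
a_n := \emb(L_n) + 1.
\]
Then for all $m,n\ge 1$ one has
\[
a_{m+n} = \emb(L_{m+n}) + 1 \le \emb(L_m)+\emb(L_n)+2 = a_m + a_n,
\]
so $\{a_n\}$ is an honestly subadditive sequence of nonnegative real numbers.

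Next, I would apply Fekete's lemma: any subadditive sequence $\{a_n\}$ bounded below satisfies $\lim_{n\to\infty} a_n/n = \inf_{n\ge 1} a_n/n$. Since $a_n\ge 1 > 0$ this infimum is a finite nonnegative real number; call it $\emb_L$. From
\[
\frac{\emb(L_n)}{n} = \frac{a_n}{n} - \frac{1}{n},
\]
I conclude that $\emb(L_n)/n$ converges to the same limit $\emb_L$.

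Finally, the two lower bounds are obtained by passing to the limit in the inequalities of Proposition~\ref{bad lower bound}. Dividing $\emb(L_n) \ge (n+7)/9$ by $n$ and letting $n\to\infty$ gives $\emb_L \ge 1/9$ unconditionally; similarly, dividing the conditional bound $\emb(L_n) \ge 3(n-1)/19$ by $n$ yields $\emb_L \ge 3/19$ under the $11/8$-Conjecture.

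No step is particularly hard: the main (minor) subtlety is the bookkeeping of the additive constant in Theorem~\ref{steps}(1), handled by the shift $a_n = \emb(L_n)+1$. The rest is a direct appeal to Fekete's lemma together with the previously established lower bounds.
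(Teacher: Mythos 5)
Your proof is correct and follows essentially the same route as the paper: the same shift $a_n=\emb(L_n)+1$ to obtain genuine subadditivity from Theorem~\ref{steps}(1), the same appeal to Fekete's lemma, and the same use of Proposition~\ref{bad lower bound} for the lower bounds.
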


\begin{proof}
The sequence $a_n = \emb(L_n)+1$ is subadditive by Theorem~\ref{steps}(1), above.
Therefore, by Fekete's lemma~\cite{Fekete}, we have $\inf_n \frac{a_n}n = \lim_n \frac{a_n}n$.

Since $a_n = \emb(L_n) + 1$, we have $\lim_n \frac{\emb(L_n)}n= \lim_n \frac{a_n}n$, and in particular the sequence ${\emb(L_n)}/n$ is convergent.

The inequalities follow directly from Proposition~\ref{bad lower bound} above.
\end{proof}

\begin{remark}
Steven Sivek pointed out to the authors that we can obtain upper bounds for $\emb_L$ as follows. Suppose $\emb(L_n) =e$ for some $n$. Then by induction one can show that $\emb(L_{2^k n})\leq2^k(e+1)-1$, and hence $\emb_L \leq \frac{1}{n}(e+1)$. For example, below we will show that $\emb(L_{19})=4$, and so this implies that $\emb_L \leq \frac{5}{19}$.
\end{remark}

\begin{proof}[Proof of Theorem~\ref{steps}]
\begin{enumerate}
\item By Proposition~\ref{p:generalproperties}, it is enough to prove that $\emb(L_{m+n}) \le \emb(L_m\#L_n)+1$.
Suppose therefore that $\emb(L_m\# L_n) = a$, i.e. that $\#_a S^2\times S^2 = X \cup_{L_m\# L_n} Y$.
Consider the cobordism $W_0$ in Figure~\ref{cobordism0};
this is obtained from $(L_m\#L_n)\times I$ (represented by the disjoint plumbings of $m-1$ and $n-1$ components with framings $-2$ in brackets) by adding a $-2$-framed 2-handle $h$ joining two ends of $P_m$ and $P_n$, and a 2-handle $b$ attached along a meridian of the attaching curve of $h$ with framing 0.
%It will be shown in Lemma~\ref{l:trivialcob} below that $W_0$ is diffeomorphic to $(L_m\#L_n)\times I\#S^2\times S^2$.
The upper boundary of $W_0$ is still $L_m\# L_n$. Notice that $W_0$ contains $L_{m+n}$, since it contains the boundary of the plumbing of a chain of $-2$-framed unknots of length $m+n-1$.
In particular, the closed 4-manifold $Z_0 = X\cup_{L_m\# L_n} W_0\cup_{L_m\# L_n} Y$ contains a copy of $L_{m+n}$. We will show in  Lemma~\ref{l:trivialcob} that $Z_0$ is diffeomorphic to $\#_{a+1} S^2\times S^2$, and hence we obtain $\emb(L_{m+n}) \le a+1$, as desired.

\item Suppose that $\emb(L_n)=a$, so that $\#_a S^2 \times S^2 = X \cup_{L_{n}} Y$. Consider the cobordisms $W_1$ in Figure \ref{cobordism} and $W_2$ in Figure \ref{cobordism2}, each consisting of two 2-handles (labelled $h,b$) added to $L_n \times I$. Notice that in both cases the upper boundary of the cobordism is still $L_n$. Hence we can form new spin 4-manifolds $Z_i := X \cup W_i \cup Y$. Now observe that $L_{n-1}$ is embedded in $W_1$ and $L_{n+1}$ is embedded in $W_2$ as middle levels. In both cases, just take the surgery description given by the plumbing of $-2$ spheres and add the surgery curve corresponding to $h$. For $W_2$ the claim is immediate, and for $W_1$ we must slam dunk the 0-framed curve $h$. Now to complete the proof we only need to show that each $Z_i$ is diffeomorphic to $\#_{a+1} S^2 \times S^2$ (to get the right-hand inequality we reindex), and we do this in the next lemma.\qedhere
\end{enumerate}
%Suppose that $\emb(L_n)=m$, so that $\#_m S^2 \times S^2 = X \cup_{L_{n}} Y$. Consider the cobordisms $W_1$ in Figure \ref{cobordism} and $W_2$ in Figure \ref{cobordism2}, each consisting of four 2-handles (labelled $h_1,b_1,h_2,b_2$) added to $L_n \times I$ (represented by the plumbing of $n-1$ spheres with framings $-2$ in brackets). Notice that in both cases the upper boundary of the cobordism is still $L_n$. Hence we can form new spin 4-manifolds $Z_i := X \cup W_i \cup Y$. Now observe that $L_{n-2}$ is embedded in $W_1$ and $L_{n+2}$ is embedded in $W_2$ as middle levels. In both cases, just take the surgery description given by the plumbing of $-2$ spheres and add the surgery curves corresponding to $h_1$ and $h_2$. For $W_2$ the claim is immediate, and for $W_1$ we must slam dunk the two 0-framed curves. Now to complete the proof we only need to show that each $Z_i$ is diffeomorphic to $\#_{m+2} S^2 \times S^2$ (to get the left-hand inequality we reindex), and this follows from the next lemma.
\end{proof}

\begin{figure}[h]
\labellist
\pinlabel $\langle-2\rangle$ at 35 -5
\pinlabel $\langle-2\rangle$ at 390 -5
\pinlabel $\hphantom{\langle-2\rangle}h$ at 180 -5
\pinlabel $b$ at 188 90
\pinlabel $0$ at 228 90
\pinlabel $\hphantom{\langle-2\rangle}-2$ at 208 -5
%\pinlabel $0$ at 69 79
%\pinlabel $0$ at 71 109
%\pinlabel $h_2$ at 112 79
%\pinlabel $b_2$ at 112 109
\endlabellist
\centering
\includegraphics[scale=.95]{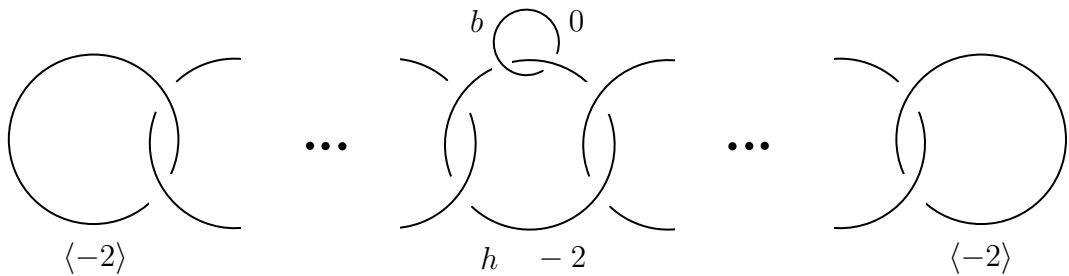}
\caption{The cobordism $W_0$.}\label{cobordism0}
\end{figure}

\begin{figure}[h]
\labellist
\pinlabel $\langle-2\rangle$ at 35 -5
\pinlabel $\langle-2\rangle$ at 92 -5
\pinlabel $\langle-2\rangle$ at 148 -5
\pinlabel $\langle-2\rangle$ at 329 -5
\pinlabel $\langle-2\rangle$ at 386 -5
\pinlabel $h$ at 12 79
\pinlabel $b$ at 12 109
\pinlabel $0$ at 54 79
\pinlabel $0$ at 52 109
%\pinlabel $0$ at 69 79
%\pinlabel $0$ at 71 109
%\pinlabel $h_2$ at 112 79
%\pinlabel $b_2$ at 112 109
\endlabellist
\centering
\includegraphics[scale=.95]{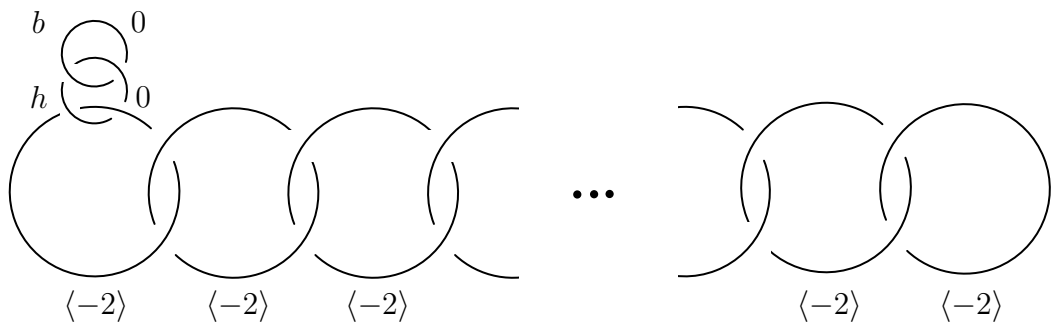}
\caption{The cobordism $W_1$.}\label{cobordism}
\end{figure}

\begin{figure}[h]
\labellist
\pinlabel $\langle-2\rangle$ at 35 -5
\pinlabel $\langle-2\rangle$ at 92 -5
\pinlabel $\langle-2\rangle$ at 148 -5
\pinlabel $\langle-2\rangle$ at 329 -5
\pinlabel $\langle-2\rangle$ at 386 -5
\pinlabel $h$ at 12 79
\pinlabel $b$ at 12 109
\pinlabel $-2$ at 57 79
\pinlabel $0$ at 54 109
%\pinlabel $-2$ at 360 79
%\pinlabel $0$ at 367 109
%\pinlabel $h_2$ at 408 79
%\pinlabel $b_2$ at 408 109
\endlabellist
\centering
\includegraphics[scale=.95]{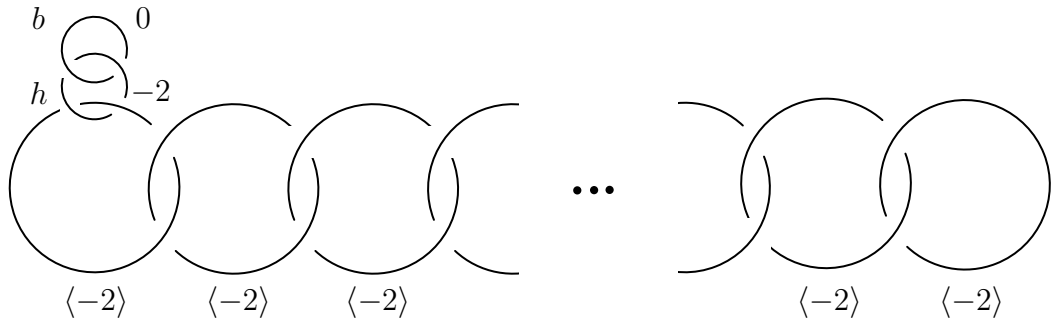}
\caption{The cobordism $W_2$.}\label{cobordism2}
\end{figure}

\begin{lemma}\label{l:trivialcob}
%$W_0 \cong (L_m\#L_n)\times I\#S^2\times S^2$, and 
%$W_i \cong L_n \times I \# S^2 \times S^2$, for $i=1,2$.
In each case above, $Z_i$ is diffeomorphic to $\#_{a+1} S^2 \times S^2$, for $i=0,1,2$.
%:= X \cup W_i \cup Y$ is diffeomorphic to $\#_{a+1} S^2 \times S^2$, for $i=0,1,2$.
\end{lemma}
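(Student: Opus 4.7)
The plan is to show that in each case the pair of $2$-handles $h$ and $b$ added in $W_i$ realises an $S^2\times S^2$-summand, so that
\[
W_i \;\cong\; (L\times I)\,\#\,(S^2\times S^2),
\]
where $L$ denotes $L_m\# L_n$ for $i=0$ and $L_n$ for $i\in\{1,2\}$. Once this is established, the conclusion is immediate from the fact that interior connect sums commute with gluing along boundary $3$-manifolds:
\[
Z_i \;=\; X\cup W_i\cup Y \;\cong\; \bigl(X\cup(L\times I)\cup Y\bigr)\,\#\,(S^2\times S^2) \;=\; \#_a S^2\times S^2\,\#\,S^2\times S^2 \;=\; \#_{a+1} S^2\times S^2.
\]

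The key structural feature of $W_i$ is that $b$ is attached along a $0$-framed meridian of the attaching circle of $h$; this is precisely the Kirby-calculus signature of an $S^2\times S^2$-stabilisation. In all three cases the framing $f_h$ of $h$ is even (either $0$ or $-2$), so the pair $(h,b)$ contributes the rank-$2$ block $\begin{pmatrix} f_h & 1\\ 1 & 0\end{pmatrix}$ to the intersection form of $W_i$, and this block is isomorphic to the hyperbolic form $H$. Geometrically, the cocore of $h$ and the core of $b$ glue along their common boundary $\mu$ to give a smoothly embedded $2$-sphere $S_1\subset W_i$ of square $0$, and a transverse dual $2$-sphere $S_2$ of square $0$ comes from the core of $h$ after capping off $\gamma_h$. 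For $W_0$ and $W_2$ (where $f_h=-2$) I would first perform a single slide of $h$ over the $0$-framed meridian $b$, which shifts the framing of $h$ by $\pm 2$ and can therefore be used to change $-2$ to $0$ while keeping $b$ a $0$-framed meridian of the new attaching circle; this reduces both cases to the same setup as $W_1$, where the framings are $(0,0)$ from the outset.

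The main technical obstacle is producing the dual sphere $S_2$ as an embedded (rather than merely immersed) $2$-sphere in $W_i$, since the attaching circle $\gamma_h$ is linked with the plumbing chain $P_n$ (or with the junction of the two plumbings in $W_0$) and does not obviously bound a smooth disk. The remedy is to perform further handle slides in the Kirby diagrams of Figures~\ref{cobordism0}--\ref{cobordism2}: sliding $h$ over components of the plumbing introduces only even framing shifts (since each plumbing component is $-2$-framed), and any such shift can be absorbed by an additional slide over the $0$-framed meridian $b$. By iterating these slides one can gradually unlink $\gamma_h$ from the plumbing chain, ultimately isolating the Hopf pair $(h,b)$ in a small $4$-ball of the diagram, disjoint from $L\times I$. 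In this final configuration the $S^2\times S^2$-summand is visibly present, and the identification $W_i\cong (L\times I)\#(S^2\times S^2)$ follows directly, completing the proof.
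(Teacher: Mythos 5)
Your argument hinges on the intermediate claim that the pair $(h,b)$ can be completely unlinked from the bracketed surgery diagram, so that $W_i\cong (L\times I)\,\#\,(S^2\times S^2)$. That claim is false, so the proof cannot be completed along these lines. In each of the three cobordisms the attaching circle $\gamma_h$ is homotopically essential in $L$: in $W_1$ and $W_2$ it is a meridian of an end curve of the chain of $-2$-framed bracketed unknots, hence it represents a generator of $\pi_1(L_n)\cong\Z/n\Z$, and in $W_0$ it links an end curve of each of the two chains, representing the product of generators of the two free factors of $\pi_1(L_m\#L_n)$. Consequently attaching $h$ kills a nontrivial element, and $\pi_1(W_i)$ is a proper quotient of $\pi_1(L)$ (indeed trivial for $W_1,W_2$), whereas $\pi_1\bigl((L\times I)\#(S^2\times S^2)\bigr)\cong\pi_1(L)$. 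Since handle slides of $h$ over bracketed components only realize isotopies of $\gamma_h$ inside $L$, and sliding $h$ over $b$ band-sums $\gamma_h$ with its own meridian (changing only the framing), no sequence of such moves can ever isolate the Hopf pair in a ball disjoint from $L\times I$: the diffeomorphism type of $W_i$ obstructs it. The same issue undermines your dual sphere $S_2$: $\gamma_h$ does not bound even an immersed disk in $L\times I$, so the core of $h$ cannot be capped off inside the cobordism; the sphere $S_1$ built from the cocore of $h$ and the core of $b$ exists, but it has no dual sphere in $W_i$.

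The stabilization only materializes after the simply connected complementary pieces are glued back in, and this is how the paper proceeds: surgering $b$ into a dotted circle produces a 1-handle that $h$ cancels, exhibiting $W_i$ as the result of surgery on an embedded framed circle in $L\times I$. That circle need not be null-homotopic in $L\times I$, but it is null-homotopic in the closed, simply connected manifold $X\cup(L\times I)\cup Y=\#_a S^2\times S^2$, and surgery on a null-homotopic circle in a simply connected 4-manifold yields a connected sum with one of the two $S^2$-bundles over $S^2$ (\cite[Proposition 5.2.3]{GS}). Your observation that the relevant framings are even is the right ingredient for the final step --- $W_i$ is spin, which rules out the twisted bundle --- but it must be applied to the closed manifold $Z_i$ at this stage, not used to split an $S^2\times S^2$ summand off $W_i$ itself.
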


\begin{proof}%[Proof of Lemma]
Let $L$ denote either $L_m\#L_n$ or $L_n$, depending on the respective case.
In any of the three cases we can surger $b$ in $W_i$ to a 1-handle in dotted circle notation, and the resulting cobordism is the trivial cobordism $L\times I$ since $h$ will cancel the resulting 1-handle. Therefore we obtain $W_i$ by reversing this operation, i.e. performing surgery on a copy of $S^1 \times D^3$ in $L \times I$. Now consider this operation occurring inside the closed manifold $X \cup (L \times I) \cup Y =\#_a S^2\times S^2$ (where $X$ and $Y$ come from the proof of Theorem~\ref{steps} above, and depend on which case we are in). Since this manifold is simply connected, the surgery curve $S^1 \times \{0\}$ is null-homotopic
%(because of the existence of the cancelling 2-handle), 
and the effect of the surgery is to produce a connected summand with one of the two $S^2$ bundles over $S^2$~\cite[Proposition 5.2.3]{GS}.
We now claim that $W_i$ is spin: in fact, $W_i$ embeds in the spin $2$-handlebody obtained by removing all brackets in Figures~\ref{cobordism0}--\ref{cobordism2} above. It follows that we must be taking a connected sum with $S^2 \times S^2$, and we get that $Z_i= X \cup W_i \cup Y$ is diffeomorphic to $\#_{a+1} S^2 \times S^2$, as claimed.
\end{proof}

We end this section with a lengthy discussion of the embedding numbers for $L_n$ with $n\leq 19$. We again emphasize that in this section we are abusing notation by using $\emb(L_{2n})$ to denote the embedding number of $L_{2n}$ with respect to the spin structure that extends over $P_{2n}$.

%Recall that for even $n$ the embedding numbers are all 1, so we again restrict ourselves to the odd case.

\begin{theorem}\label{t:smallLn}
The embedding numbers for $L_n$ with $n\leq 19$ are given by the following table.
%\begin{center}
%\begin{tabular}{ |c|c|c|c|c|c|c|c|c|c| } 
%\hline
%$L_n$ & $L_3$& $L_5$& $L_7$& $L_9$& $L_{11}$& $L_{13}$& $L_{15}$& $L_{17}$& $L_{19}$\\
%\hline
%$\emb(L_n)$ & $2$& $4$& $6$& $8$& $10$& $10$& $8$& $6$& $4$ \\
%\hline
%\end{tabular}
%\end{center}
%
\begin{center}
\begin{tabular}{ |c|c|c|c|c|c|c|c|c|c|c|c|c|c|c|c|c|c|c| } 
\hline
$n$ & $2$ & $3$ & $4$ & $5$ & $6$ & $7$ & $8$ & $9$ & $10$ & $11$ & $12$ & $13$ & $14$ & $15$ & $16$ & $17$ & $18$ & $19$\\
\hline
$\emb(L_n)$ & $1$ & $2$ & $3$ & $4$ & $5$ & $6$ & $7$ & $8$ & $9$ & $10$ & $11$ & $10$ & $9$ & $8$ & $7$ & $6$ & $5$ & $4$\\
\hline
\end{tabular}
\end{center}
%
%\begin{center}
%\begin{tabular}{ |c|c|c|c|c|c|c|c|c|c|c|c|c|c|c|c|c|c|c| } 
%\hline
%$L_n$ & $L_2$ & $L_3$ & $L_4$ & $L_5$ & $L_6$ & $L_7$ & $L_8$ & $L_9$ & $L_{10}$ & $L_{11}$ & $L_{12}$ & $L_{13}$ & $L_{14}$ & $L_{15}$ & $L_{16}$ & $L_{17}$ & $L_{18}$ & $L_{19}$\\
%\hline
%$\emb(L_n)$ & $1$ & $2$ & $3$ & $4$ & $5$ & $6$ & $7$ & $8$ & $9$ & $10$ & $11$ & $10$ & $9$ & $8$ & $7$ & $6$ & $5$ & $4$\\
%\hline
%\end{tabular}
%\end{center}
\end{theorem}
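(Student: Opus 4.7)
The plan is to match explicit upper-bound constructions with obstruction-based lower bounds, then propagate equalities via Theorem~\ref{steps}(2). For the upper bounds, the linear plumbing $P_n$ of $n-1$ spheres of square $-2$ has $\partial P_n=L_n$ and $b_2(P_n)=n-1$, so Theorem~\ref{all embed} gives $\emb(L_n)\le n-1$, which is sharp for $2\le n\le 12$. For $n\ge 13$ one instead uses explicit embeddings of $P_{17}$ and $P_{19}$ into the $K3$ surface, built by Kirby calculus inside a standard handle diagram of $K3$: arranging such an embedding so that the complement $K3\smallsetminus P_n$ admits a handle decomposition with only $0$- and $2$-handles produces a spin filling of $L_n$ with $b_2=22-(n-1)$, hence $\emb(L_{17})\le 6$ and $\emb(L_{19})\le 4$ by another application of Theorem~\ref{all embed}. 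The remaining entries for $n=13,14,15,16,18$ are then pinned from above by iterating $\emb(L_n)\le \emb(L_{n-1})+1$ outward from $n=19$.

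For the lower bounds, assume $L_n$ embeds (respecting the prescribed spin structure) in $\#_m S^2\times S^2$, splitting it into spin pieces $X_1,X_2$ with $\partial X_i=\pm L_n$. Gluing $P_n$ on the appropriate side yields closed spin $4$-manifolds $M_1=(-P_n)\cup X_1$ and $M_2=P_n\cup X_2$, with $b_2(M_i)=(n-1)+b_2(X_i)$ and $\sigma(M_i)=\pm((n-1)+s)$, where $s=\sigma(X_1)$. The classification of even unimodular forms forces $(n-1)+s\equiv 0\pmod 8$, Proposition~\ref{p:spin-link} pins down the parity of $b_2(X_i)$ (even for odd $n$, odd for even $n$), and the $10/8$-theorem applied to both $M_1$ and $M_2$, together with the budget $b_2(X_1)+b_2(X_2)=2m$ and $|s|\le \min(b_2(X_i))$, restricts the admissible triples $(s,b_2(X_1),b_2(X_2))$ to a short list. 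The endpoints use results already in the paper: $\emb(L_2)\ge 1$ by Proposition~\ref{surgery prop}(1), $\emb(L_3)\ge 2$ by Theorem~\ref{t:oddH1}, and odd non-square $n$ by Proposition~\ref{surgery prop}(3). On the descending side this analysis is already sharp: for $L_{19}$ with $m=3$ the signature congruence forces $s=-2$, and then $10/8$ forces $\min b_2(X_i)\ge 4$, contradicting $b_2(X_1)+b_2(X_2)=6$; analogous arithmetic handles $L_{17}$ with $m=5$. At the ascending peak $n=12$ the raw $10/8$-estimate leaves several candidate intersection forms (e.g.\ $Q_{M_1}=E_8\oplus bH$), which must be eliminated by invoking Donaldson's theorem for the definite part of $Q_{M_i}$ (into which $\pm Q_{P_n}$, the $A_{n-1}$ lattice, must embed as a sublattice), supplemented by the Ozsv\'ath--Szab\'o correction-term bound as used in Proposition~\ref{Brieskorn_comp} with $d(L_n)$ computed from the $P_n$ plumbing formula. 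Once $\emb(L_{12})\ge 11$ is established directly, the remaining ascending lower bounds propagate via $\emb(L_n)\ge \emb(L_{n-1})-1$.

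The main obstacle is twofold. First, constructing $P_{17}\hookrightarrow K3$ and $P_{19}\hookrightarrow K3$ with a two-handlebody complement is the technical core of the descending upper bounds and requires dedicated Kirby calculus inside $K3$. Second, the ascending lower bounds $\emb(L_n)\ge n-1$ near the turning point $n=12$ are tight, and the combination of $10/8$ and the signature congruence alone leaves multiple admissible intersection forms; eliminating each by hand using Donaldson's theorem and correction-term bounds is the most delicate part of the proof.
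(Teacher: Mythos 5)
Your overall architecture (explicit fillings for the key values, spin-splitting obstructions glued to $\pm P_n$, then propagation through Theorem~\ref{steps}(2)) matches the paper, but two steps as written have genuine problems. First, the upper bound $\emb(L_{19})\le 4$ is the technical heart of the descending half, and you simply assert that $P_{19}$ embeds in the standard $K3$ with a complement built from one $0$-handle and four even $2$-handles, ``by Kirby calculus inside a standard handle diagram of $K3$.'' No such calculus is exhibited, and in fact the paper states it was unable to produce it: for $P_{17}$ the direct diagrammatic argument works (sliding the sixteen $-1$-framed meridians into a $-2$-chain), but for $P_{19}$ the paper instead builds a \emph{homotopy} $K3$, namely the double cover of $\CP^2$ branched over a smooth sextic obtained by capping off a quasipositive genus-$1$ braid cobordism from $T_{2,19}$ to $T_{6,6}$, and then controls the handle structure of the complement of $P_{19}$ via the Akbulut--Kirby branched-cover handle decompositions. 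Without this (or some substitute) construction, your $\emb(L_{19})\le 4$, and hence every entry for $n=13,\dots,19$, is unproved; Proposition~\ref{sextic cover} alone gives $P_{19}\subset K3$ but says nothing about the handle structure of the complement, which is exactly what is needed to invoke Theorem~\ref{all embed}.

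Second, your lower-bound machinery at the peak $n=12$ is both weaker than necessary and patched with tools that do not apply. You impose only the algebraic congruence $\sigma\equiv 0\pmod 8$ on the closed-up manifolds, which at $m=10$ leaves the case $\sigma(X_1)=-3$, i.e.\ closed spin $4$-manifolds of signature $\pm 8$ (your ``$E_8\oplus bH$'' candidates). You propose to kill these with Donaldson's theorem and the Ozsv\'ath--Szab\'o correction-term bound, but Donaldson's theorem concerns \emph{definite} closed $4$-manifolds and says nothing about $E_8\oplus bH$, and the correction-term inequality used in Proposition~\ref{Brieskorn_comp} applies to definite spin fillings, whereas the residual $X_1$ here need not be definite; neither tool closes the case. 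The correct and much simpler ingredient is Rokhlin's theorem (equivalently, the Rokhlin invariant of $(L_{12},\t)$): a closed spin $4$-manifold has signature divisible by $16$, so $\sigma(X_1)\equiv 5\pmod{16}$, forcing $\sigma(X_1)=5$, after which gluing the piece with $b_2\le 10$ to $\pm P_{12}$ gives $b_2\le 21<\tfrac{10}{8}\cdot 16+2$ and the $10/8$-Theorem finishes, exactly as in the paper. (The same mod-$16$ input is what the paper uses for $L_{17}$ and $L_{19}$, though there your mod-$8$ version happens to suffice; also note the direct $L_{17}$ computation and the parity input from Proposition~\ref{p:spin-link} are not needed once Theorem~\ref{steps}(2) propagates $\emb(L_{12})=11$ and $\emb(L_{19})=4$ across the whole range.)
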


%\mMarco{Some things are commented/rearranged here}
%We will make several of these computations in detail; the ones that are omitted will be similar to the others and will be left to the reader. For $L_{21}$ it can be shown that the possible embedding numbers are 4 or 6, but we were not able to determine this value, or for other $n>21$.

We will make several of these computations in detail: in particular, we will compute $\emb(L_2)$, $\emb(L_{12})$, and $\emb(L_{19})$, and these will suffice to prove the theorem.
We will also give a more explicit computation of $\emb(L_{17})$.

Observe that for these values of $n$, Theorem~\ref{bad lower bound} is insufficient to give the necessary lower bounds. From Proposition~\ref{bad bound 2} it follows that $\emb(L_n) \leq n-1$, and for $n=3, \cdots, 11$ we claim that this cannot be improved upon.

%\begin{claim}
%$\emb(L_7) = 6$.
%\end{claim}
%
%\begin{proof}
%If $\emb(L_7) <6$, then $L_7$ embeds in $\#_5 S^2 \times S^2$, splitting it into two spin pieces $U$ and $V$, say, with $\partial U =L_7$ and $\partial V = \overline{L_7}$. Notice that one of $U$ or $V$ must have $b_2 \leq 5$, and $\sigma(U)=-\sigma(V)$. Now $L_n$ has a unique spin structure when $n$ is odd, with an associated Rokhlin invariant $\mu(L_n) \in \Z/16\Z$. We can calculate from the canonical plumbing for $L_n$ that $\mu(L_n) = 1-n$ (mod 16). Hence $\mu(L_7)=-6$ (mod 16), and it follows that either $L_7$ or $\overline{L_7}$ bounds a spin 4-manifold with $b_2 \leq 5$ and $\sigma \equiv \pm 6$, which is clearly a contradiction.
%\end{proof}

\begin{claim}
$\emb(L_2) = 1$.
\end{claim}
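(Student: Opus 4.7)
The plan is to establish the claim by verifying the upper and lower bounds separately, using tools already developed in the paper.

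For the upper bound $\emb(L_2) \leq 1$: recall $L_2 = L(2,1)$ is the boundary of $P_2$, which is simply a single $2$-handle with even framing $-2$ attached to $B^4$. By Theorem~\ref{all embed} applied to this one-component even-framed surgery description, $L_2$ embeds into the double of this handlebody, which is $S^2\times S^2$. Moreover, both complementary pieces in this embedding are diffeomorphic to $P_2$, so the spin structure on $S^2\times S^2$ restricts to the spin structure on $L_2$ that extends over $P_2$, which is precisely the spin structure being considered in the remark preceding the claim.

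For the lower bound $\emb(L_2) \geq 1$: since $L_2 = L(2,1)$ can be realized as $-2$-surgery along the unknot in $S^3$, Proposition~\ref{surgery prop}(1) immediately gives $\emb(L_2) \geq 1$. Equivalently, $|H_1(L_2)|=2$ is not a square, so $L_2$ cannot embed smoothly in $S^4$ by the standard obstruction from the torsion linking form \cite{Gil-Liv}.

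Combining the two bounds yields $\emb(L_2) = 1$. There is no serious obstacle here: the computation is essentially the base case of the inductive picture described just before the claim, and both inequalities follow directly from tools already in the paper.
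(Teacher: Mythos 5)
Your proof is correct and follows essentially the same route as the paper: the upper bound comes from doubling $P_2$ to get $S^2\times S^2$ (so the spin structure restricting to $L_2$ is the one extending over $P_2$), and the lower bound comes from the fact that $|H_1(L_2)|=2$ is not a square, which is exactly the content of Proposition~\ref{surgery prop}(1).
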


\begin{proof}
Indeed, $L_2$ is understood to have the spin structure induced as the boundary of $P_2$, whose double is $S^2\times S^2$.
Moreover, since the order of $H_1(L_2)$ is not a square, $\emb(L_2)\ge 1$, hence $\emb(L_2) = 1$.
\end{proof}

%A similar argument using the Rokhlin invariant also works for $n=3,5$, and 9. To show $\emb(L_{11}) = 10$ we use the 10/8-Theorem. 
%
%\begin{claim}
%$\emb(L_{11})=10$.
%\end{claim}
%
%\begin{proof}
%The argument begins in a familiar way: if $\emb(L_{11}) <10$, then $L_{11}$ embeds in $\#_9 S^2 \times S^2$, splitting it into two spin pieces $U$ and $V$, say, with $\partial U =L_{11}$ and $\partial V = \overline{L_{11}}$. Since $\mu(L_{11}) = -10$ (mod 16) and $b_2(\#_9 S^2 \times S^2) =18$, we must have that $\sigma(U)=6$ and $\sigma(V)=-6$. Furthermore, at least one of $U$ or $V$ must have $b_2 \leq 9$, and we can glue this manifold to either $P_{11}$ or $\overline{P_{11}}$ to get a spin closed 4-manifold $X$ with $b_2(X) \leq 19$ and $|\sigma(X)|=16$. But then
%\[
%b_2(X) \leq 19 < \frac{10}{8}(16) +2= \frac{10}{8}|\sigma(X)| +2,
%\]
%which contradicts the 10/8-Theorem.
%\end{proof}

\begin{claim}
$\emb(L_{12})=11$.
\end{claim}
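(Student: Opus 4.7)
The plan is to establish both $\emb(L_{12}) \le 11$ and $\emb(L_{12}) \ge 11$, where throughout we mean the embedding number relative to the spin structure on $L_{12}$ that extends over $P_{12}$.

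For the upper bound, I would simply recall that $L_{12}$ is the boundary of the linear plumbing $P_{12}$ of eleven $-2$-framed unknotted spheres. Since every framing is even, Theorem~\ref{all embed} yields an embedding of $L_{12}$ into $\#_{11} S^2 \times S^2$; and because the embedding is obtained by doubling the spin $4$-manifold $P_{12}$, the restriction of the spin structure on $\#_{11} S^2\times S^2$ to $L_{12}$ is the distinguished one.

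For the lower bound, I would argue by contradiction: suppose $L_{12}$ embeds in $\#_m S^2 \times S^2$ for some $m \le 10$, inducing the distinguished spin structure. Write $\#_m S^2 \times S^2 = X \cup_{L_{12}} Y$ with $\partial X = L_{12}$ and $\partial Y = -L_{12}$. Both pieces are spin, and their restrictions to $L_{12}$ agree with the restriction of the spin structure on $P_{12}$ by hypothesis. Hence we may form the two closed spin $4$-manifolds
$$Z_1 := (-P_{12}) \cup_{L_{12}} X, \qquad Z_2 := P_{12} \cup_{-L_{12}} Y.$$
Since $L_{12}$ is a rational homology sphere, Novikov additivity gives $\sigma(Z_1) = 11 + \sigma(X)$, $\sigma(Z_2) = -11 - \sigma(X)$, $b_2(Z_1) = 11 + b_2(X)$ and $b_2(Z_2) = 11 + b_2(Y)$, using $\sigma(Y) = -\sigma(X)$ and $b_2(X) + b_2(Y) = 2m$.

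The key step is to combine Rokhlin's theorem with Furuta's $10/8$-theorem. Rokhlin applied to $Z_1$ forces $\sigma(X) \equiv 5 \pmod{16}$. Since $|\sigma(X)| \le \min(b_2(X), b_2(Y)) \le m \le 10$, the only option is $\sigma(X) = 5$, so $\sigma(Z_1) = 16$ and $\sigma(Z_2) = -16$, both nonzero. Furuta's theorem then yields $b_2(Z_i) \ge \tfrac{10}{8}\cdot 16 + 2 = 22$, which forces $b_2(X) \ge 11$ and $b_2(Y) \ge 11$. But then $2m = b_2(X) + b_2(Y) \ge 22 > 20$, contradicting $m \le 10$. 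Hence $\emb(L_{12}) \ge 11$.

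The main obstacle is recognising that the purely $10/8$-based bound of Proposition~\ref{bad lower bound} is too weak here (it only gives $\emb(L_{12})\ge 3$), and that Rokhlin's $\mod 16$ constraint is precisely what is needed to pin down $\sigma(X)$ exactly, after which the $10/8$-theorem supplies the sharp numerical contradiction.
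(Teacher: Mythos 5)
Your proof is correct and follows essentially the same route as the paper: the upper bound comes from doubling the spin plumbing $P_{12}$, and the lower bound uses Rokhlin's theorem (via the closed spin manifold obtained by capping with $\pm P_{12}$) to force $\sigma = 5$ on one side and then Furuta's $10/8$-theorem to get the numerical contradiction. The only cosmetic difference is that you cap off both complementary pieces and add the two inequalities, whereas the paper caps off only the piece with $b_2 \le 10$; the content is identical.
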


\begin{proof}
Since $L_{12}$ is the boundary of $P_{12}$, by doubling we obtain $\emb(L_{12})\le 11$.
If $\emb(L_{12}) < 11$, then $L_{12}$ embeds in $\#_{10} S^2 \times S^2$, splitting it into two spin pieces $U$ and $V$, say, with $\partial U =L_{12}$ and $\partial V = \overline{L_{12}}$. Since the Rokhlin invariant associated to this spin structure satisfies $\mu(L_{12}) = -11 \pmod{16}$, and $b_2(\#_{10} S^2 \times S^2) = 20$, we must have that $\sigma(U)=5$ and $\sigma(V)=-5$. Furthermore, at least one of $U$ or $V$ must have $b_2 \leq 10$, and we can glue this manifold to either $P_{12}$ or $\overline{P_{12}}$ to get a spin closed 4-manifold $X$ with $b_2(X) \leq 21$ and $|\sigma(X)|=16$. But then
\[
b_2(X) \leq 21 < \frac{10}{8}(16) +2= \frac{10}{8}|\sigma(X)| +2,
\]
which contradicts the 10/8-Theorem.
\end{proof}

Before considering $\emb(L_{19})$ we will first prove directly that $\emb(L_{17}) =6$, since the strategy is the same as for $L_{19}$ and the argument in this case is easier. In both cases we claim that the plumbings $P_{17}$ and $P_{19}$ embed in the $K3$ surface. For $L_{17}$ we show directly that the complement $K3\setminus P_{17}$ consists of a single 0-handle and six 2-handles (necessarily with even framing, since the $K3$ surface is spin), and so the standard argument shows that $\emb(L_{17})\leq6$. However, we were unable to carry out fully the analogous argument for $L_{19}$. Instead we embed $P_{19}$ in a \emph{homotopy} $K3$ such that the complement admits a handle decomposition with a single 0-handle and four even-framed 2-handles, giving $\emb(L_{19})\leq4$.

The following fact is related to, though not directly relevant for, computing $\emb(L_n)$.

\begin{proposition}\label{sextic cover}
The plumbings $P_n$ embed in the standard smooth $K3$ surface for every $n\le 19$.
\end{proposition}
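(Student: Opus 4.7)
The plan is to exhibit the embedding by realizing the $A_{18}$ configuration of $(-2)$-spheres inside the $K3$ surface as part of a singular fiber of an elliptic fibration. Recall that $P_n$ is the linear plumbing of $n-1$ spheres of self-intersection $-2$ (since the continued fraction of $n/(n-1)$ is $[2,2,\dots,2]^-$ with $n-1$ twos). In particular $P_{19}$ is the $A_{18}$ plumbing, and for every $n\le 19$ the plumbing $P_n$ sits as a sub-plumbing of $P_{19}$ obtained by taking any $n-1$ consecutive spheres of the chain. Hence it suffices to embed $P_{19}$ smoothly in $K3$.

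For this, I would invoke the Miranda--Persson classification of singular fiber configurations on elliptic $K3$ surfaces, which guarantees the existence of an elliptic fibration $K3 \to \CP^1$ carrying a singular fiber of Kodaira type $I_{19}$; a valid companion configuration is $I_{19} + 5\cdot I_1$, whose Euler numbers sum to the required $24$. By Kodaira's description of singular fibers, the $I_{19}$ fiber is a cycle of $19$ smooth rational curves, each a $(-2)$-sphere in $K3$, meeting pairwise transversely in single points, with dual graph the extended Dynkin diagram $\tilde A_{18}$.

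Now remove any one component from the $I_{19}$ cycle; the remaining $18$ curves form a chain whose dual graph is $A_{18}$ and whose self-intersections are all $-2$. Since a closed tubular neighborhood of a transverse configuration of complex curves in a complex surface is diffeomorphic to the plumbed $4$-manifold determined by the dual graph and self-intersection data, such a neighborhood gives a smooth embedding $P_{19}\hookrightarrow K3$. Restricting this embedding to a subchain of $n-1$ consecutive spheres produces the desired embedding $P_n\hookrightarrow K3$ for every $n\le 19$. The only non-formal input is the existence of an elliptic $K3$ surface with an $I_{19}$ fiber, which is the classical part of the argument; everything after that is routine.
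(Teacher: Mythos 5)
Your argument is correct, but it takes a genuinely different route from the paper. The paper stays within the branched-cover framework used throughout that section: it takes a complex sextic in $\CP^2$ with a single singular point whose link is $T_{2,19}$, perturbs it so that the singularity is replaced by its Milnor fiber (a pushed-in minimal Seifert surface of $T_{2,19}$), and observes that the double cover of $\CP^2$ branched over the smoothed sextic is a complex $K3$ containing the double cover of $B^4$ branched over that surface, which is exactly the chain of $(-2)$-spheres $P_{19}$. You instead realize the $A_{18}$ configuration inside a singular fiber of an elliptic fibration: an elliptic $K3$ with an $I_{19}$ fiber contains a cycle of nineteen $(-2)$-curves, deleting one component leaves a transverse $A_{18}$ chain, and a tubular neighborhood of that chain is $P_{19}$; restricting to subchains (or simply noting, as the paper does, that it suffices to treat $n=19$) handles all $n\le 19$. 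Both arguments are sound, and in both cases one should add the (easy, standard) remark that the complex $K3$ produced is diffeomorphic to the standard smooth $K3$. The trade-off: your approach outsources the nontrivial input to the existence of an elliptic $K3$ carrying an $I_{19}$ fiber — which is indeed known (it appears in the Miranda--Persson classification of semistable extremal fibrations, and is also the content of the ``maximal singular fibre'' results of Sch\"utt--Schweizer; note that all you really need is \emph{some} fibration with an $I_{19}$ fiber, not the specific companion configuration $I_{19}+5\,I_1$) — after which everything is formal. The paper's approach requires the existence of a sextic with a $T_{2,19}$ singularity, but it has the advantage of producing the embedding in the same double-branched-cover, handle-theoretic picture (Akbulut--Kirby) that the authors then push further to control the handle structure of the complement for the $L_{17}$ and $L_{19}$ computations; your fibration picture gives the embedding itself just as cleanly but would not directly feed into those complement arguments.
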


\begin{proof}
It clearly suffices to prove the statement for $n=19$.
To this end, recall that there exists a complex sextic $C$ in $\CP^2$ with a unique singular point, whose link is the torus knot $T_{2,19}$ \cite[Theorem 5.8]{sextics}.
We can smooth out the singularity of $C$ by taking a small perturbation, thus obtaining a complex curve $C'$.
In a neighborhood of the singular point, this has the effect of replacing the singularity with its Milnor fiber $F$, that is isotopic to a push-off of the minimal-genus Seifert surface of $T_{2,19}$ pushed into $B^4$.
By taking the double cover of $\CP^2$ branched over $C'$, we obtain a complex $K3$ surface, that contains the double cover of $B^4$ branched over $F$, and this is known to be a plumbing of $-2$-spheres of length $19$.
Therefore, $P_{19}$ embeds in a $K3$.
\end{proof}

Now for $L_{17}$ we can see an embedding of $P_{17}$ explicitly in a handle diagram for the $K3$ surface.

\begin{claim}
$\emb(L_{17})=6$. Indeed, $P_{17}$ embeds in the $K3$ surface such that the complement admits a handle decomposition with a single $0$-handle and six even-framed $2$-handles.
\end{claim}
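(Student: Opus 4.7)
The claim bundles two statements: the equality $\emb(L_{17}) = 6$, and the geometric assertion that $P_{17}$ embeds in $K3$ with complement built from one $0$-handle and six $2$-handles. The second statement implies the upper bound $\emb(L_{17}) \leq 6$ directly: by the mechanism of Theorem~\ref{all embed}, a spin $4$-manifold $W$ built from a $0$-handle and $n$ (necessarily even-framed) $2$-handles has double $\#_n S^2 \times S^2$, in which $\partial W$ sits as the middle level; here the spin condition on $W$ is inherited from $K3$, forcing the $2$-handle framings to be even.

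For the lower bound $\emb(L_{17}) \geq 6$, the plan is to apply the $10/8$-Theorem on both sides of a hypothetical embedding. Suppose $L_{17}$ embeds in $\#_m S^2 \times S^2$, splitting it into spin pieces $U, V$ with $\partial U = L_{17}$ and $\partial V = -L_{17}$; since $|H_1(L_{17})| = 17$ is odd, the spin structures automatically agree along the boundary. Set $a = b_2(U)$, $b = b_2(V)$, $s = \sigma(U)$. Novikov additivity gives $\sigma(V) = -s$ and Mayer--Vietoris gives $a + b = 2m$. Capping $U$ with $-P_{17}$ and $V$ with $P_{17}$ produces closed spin $4$-manifolds $X_1, X_2$ with $b_2(X_1) = a + 16$, $\sigma(X_1) = s + 16$, and analogously (with signs reversed) for $X_2$. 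Rokhlin's theorem forces $s \equiv 0 \pmod{16}$, and the $10/8$-Theorem yields $\min(a,b) + 16 \geq \tfrac{5}{4}|s+16| + 2$, which combines with the trivial bound $|s| \leq \min(a,b)$. A case check over $s \in 16\mathbb{Z}$ shows $a + b$ is minimized at $s = 0$ (where the bound gives $a, b \geq 6$, whereas $s = \pm 16$ forces $a, b \geq 16$, and larger $|s|$ is worse), so $m \geq 6$.

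For the upper bound, the plan is to start from the embedding $P_{19} \hookrightarrow K3$ furnished by Proposition~\ref{sextic cover} (via the sextic branched double cover with a $T_{2,19}$ singularity), restrict to the subplumbing $P_{17} \subset P_{19}$ consisting of the first sixteen spheres of the chain, and then simplify the complement $W = K3 \setminus \mathrm{int}(P_{17})$ via Kirby calculus until only a $0$-handle and six $2$-handles remain. The Euler characteristic count $\chi(W) = \chi(K3) - \chi(P_{17}) = 24 - 17 = 7$ already predicts the handle count $1 + 6$ exactly, and the spin condition forces even framings; what must be verified is that $W$ admits a handle decomposition with no $1$- or $3$-handles.

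The main obstacle is this final Kirby-calculus step. The sextic construction does not automatically produce a handle-friendly picture of $W$, so one must choose a Kirby diagram for $K3$ adapted to the embedded $P_{17}$ (for instance, derived from a standard handle picture of $K3$ or built so that $P_{17}$ appears as a visible sub-handlebody) and then eliminate every $1$- and $3$-handle of the complement by handle slides and cancellations. The numerical constraints are forced, but producing the actual handle diagram is where the essential geometric work lies.
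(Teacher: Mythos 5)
Your lower bound is correct and is essentially the paper's argument: for a splitting of $\#_m S^2\times S^2$ along $L_{17}$, Rokhlin's theorem applied to the closed spin manifolds obtained by capping with $\pm P_{17}$ forces $\sigma(U)\equiv 0 \pmod {16}$, and the $10/8$-Theorem then rules out $m\le 5$ (the paper phrases it for $m=5$ only, since $|\sigma(U)|\le 10$ immediately gives $\sigma(U)=0$ there, but your case analysis over $s\in 16\mathbb{Z}$ is the same computation).

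The gap is in the upper bound, which is exactly the substantive content of the claim: you never produce the asserted handle decomposition, and you say so yourself ("producing the actual handle diagram is where the essential geometric work lies"). Two points make this more than a deferred detail. First, the Euler characteristic count $\chi(K3\setminus \Int P_{17})=7$ is consistent with one $0$-handle and six $2$-handles but does not come close to implying that a decomposition without $1$- or $3$-handles exists; that is the whole difficulty. Second, the specific route you propose --- take the embedding $P_{19}\hookrightarrow K3$ of Proposition~\ref{sextic cover} and restrict to $P_{17}\subset P_{19}$ --- is the approach the authors explicitly could not carry out even for $P_{19}$ (which is why they build the homotopy $K3$ surface $\mathcal{K}$ via a braid cobordism instead); an abstract branched-cover embedding gives no control on the handle structure of the complement. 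The paper's proof of this claim is different and concrete: starting from the standard Kirby diagram of the $K3$ surface in \cite[Section 8.3]{GS} (Figure~\ref{K3}), one slides the sixteen $-1$-framed meridians into a chain of fifteen $-2$-framed unknots (Figure~\ref{chain}), uses an extra slide of a $0$-framed $2$-handle over a $-1$-framed meridian to lengthen the chain by one, and cancels the $1$-handle, obtaining a description of $K3$ with a single $0$-handle, twenty-two $2$-handles, and a $4$-handle in which $P_{17}$ is visibly a sub-handlebody. Once that is done, no further Kirby calculus on the complement is needed: the remaining six $2$-handles together with the $4$-handle, turned upside down, automatically form a handlebody with one $0$-handle and six even-framed $2$-handles bounded by $L_{17}$. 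Without an explicit construction of this kind, your argument establishes only the lower bound $\emb(L_{17})\ge 6$, not the claim.
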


\begin{figure}[h]
\labellist
\pinlabel $-2$ at 6 28
\pinlabel $0$ at 47 164
\pinlabel $-1$ at 106 5
\pinlabel $0$ at 141 197
\pinlabel $2$ at 152 98
\pinlabel $3$ at 277 98
\pinlabel $1$ at 277 180
\pinlabel $3$ at 221 98
\pinlabel $1$ at 221 180
\pinlabel $3$ at 333 98
\pinlabel $1$ at 333 180
\pinlabel $3$ at 388 98
\pinlabel $1$ at 388 180
\pinlabel $-1$ at 232 50
\pinlabel $-1$ at 259 50
\pinlabel $-1$ at 296 50
\pinlabel $15$ at 269 -10

\endlabellist
\centering
\includegraphics[scale=0.85]{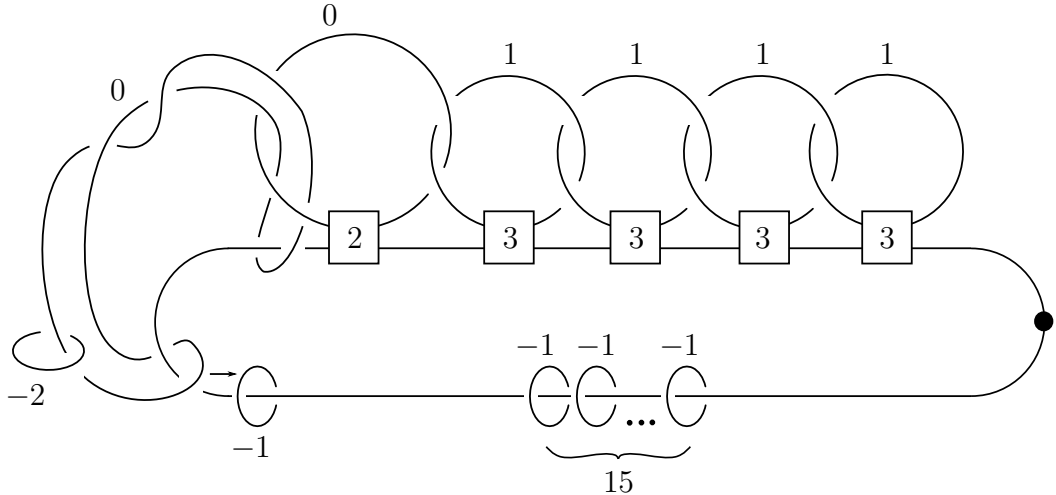}
\caption{A handle diagram for the $K3$ surface.}\label{K3}
\end{figure}

\begin{proof}
In \cite[Section 8.3]{GS} it is shown that Figure~\ref{K3} (plus a 4-handle) is a handle diagram for the $K3$ surface. Notice that in total there are sixteen $-1$-framed meridians. These can be slid, one over the other, as in Figure~\ref{chain}, to form a chain of fifteen $-2$-framed unknots, attached at the end to a $-1$-framed meridian that cancels the 1-handle. Hence we immediately see an embedding of $P_{16}$ into the $K3$ surface. To obtain $P_{17}$ we must work a little harder. If in Figure~\ref{K3} we slide the left-most 0-framed 2-handle over the left-most $-1$-framed meridian as indicated by the arrow, then after isotopy the 0-framed 2-handle becomes a $-1$-framed meridian of the dotted circle, itself with a $-2$-framed meridian (and of course linking the the meridian it slid over). Now we may begin the process of sliding the meridians to create the $-2$ chain, but we gained an extra $-2$-framed 2-handle (the original $-2$-framed 2-handle on the left of the diagram). After these slides we will see $P_{17}$ embedded in the $K3$ surface, linked with two $-1$-framed meridians. Either of these meridians cancels the 1-handle after sliding the remaining 2-handles off. The result will be a handle diagram for the $K3$ surface composed of twenty-two 2-handles, a single 0- and 4-handle, and with $P_{17}$ as a sub-handlebody. Then if we take the 2-handles not in $P_{17}$ and the 4-handle, and turn this handlebody upside down, we get a handlebody with a single 0-handle, and six even-framed 2-handles (necessarily even-framed, since the $K3$ surface is spin) with $L_{17}$ as the boundary. Hence $\emb(L_{17})\leq 6$.

Now we finish the claim by showing $L_{17}$ does not embed in $\#_5 S^2 \times S^2$. If it did, it would split $\#_5 S^2 \times S^2$ into two spin pieces $U$ and $V$. Since $\mu(L_{17})=-16 \equiv 0$ (mod 16), we have $\sigma(U)=\sigma(V)=0$. At least one of $U$ or $V$ must have $b_2 \leq5$, and gluing this manifold to either $P_{17}$ or $\overline{P_{17}}$ results in a closed spin 4-manifold $X$ with
\[
 b_2(X)\leq 21  < \frac{10}{8}(16) +2= \frac{10}{8}|\sigma(X)| +2,
\]
which contradicts the 10/8-Theorem.
\end{proof}

\begin{figure}[h]
\labellist
\pinlabel $-1$ at 22 28
\pinlabel $-1$ at 47 28
\pinlabel $-1$ at 76 28
\pinlabel $-1$ at 131 61
\pinlabel $-2$ at 132 16
\pinlabel $-1$ at 174 28
\pinlabel $-2$ at 227 30
\pinlabel $-1$ at 229 61
\pinlabel $-2$ at 227 10
\endlabellist
\centering
\includegraphics[scale=1.25]{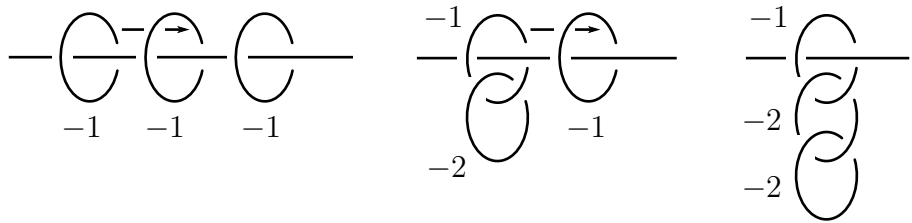}
\caption{Sliding $-1$-framed meridians.}\label{chain}
\end{figure}

Observe that our argument shows that $L_{17}$ can be obtained by even-framed surgery on a 6-component link in $S^3$. In principle one could follow through the handle calculus to realize this surgery diagram, but we have not done this.

For $L_{19}$ we were unable to prove that the complement of $P_{19}$ in the $K3$ surface (given by the embedding described in Proposition~\ref{sextic cover}) admits a handle decomposition  without 1- or 3-handles. Instead we will construct a certain \emph{smooth} sextic in $\CP^2$ whose branched double cover is a homotopy $K3$, which we denote by $\mathcal{K}$, such that $P_{19}$ embeds in $\mathcal{K}$ with a complement that admits a handle decomposition with a single 0-handle and four even-framed 2-handles.

\begin{lemma}
There is a genus-$1$ cobordism $C$ from $T_{2,19}$ to $T_{6,6}$ with no minima and no maxima.
\end{lemma}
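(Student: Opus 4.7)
The plan is to realize the cobordism $C$ as a piece of a complex sextic curve in $\mathbb{CP}^2$, and then arrange the height function to have only saddle critical points via standard Morse-theoretic cancellations.

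First, by the proof of Proposition~\ref{sextic cover} (using~\cite{sextics}), there exists an irreducible complex sextic $\mathcal{S} \subset \mathbb{CP}^2$ with a unique singular point $p_0$ whose link is $T_{2,19}$. Choose a small closed $4$-ball $B'$ around $p_0$ with $\partial B' \cap \mathcal{S} = T_{2,19}$, and a tubular neighborhood $N(\ell)$ of a generic line $\ell \subset \mathbb{CP}^2$ disjoint from $B'$; the region $W := \mathbb{CP}^2 \setminus (\Int B' \cup \Int N(\ell))$ is diffeomorphic to $S^3 \times I$, and $\partial N(\ell) \cap \mathcal{S} = T_{6,6}$ (the link at infinity of a plane sextic is the $(6,6)$-torus link). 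I define the candidate cobordism to be $C := \mathcal{S} \cap W$.

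Next, verify the topology of $C$. The genus-degree formula gives arithmetic genus $10$ for a smooth plane sextic, and $\delta(T_{2,19}) = (2-1)(19-1)/2 = 9$, so the normalization $\widetilde{\mathcal{S}}$ has geometric genus $1$ and is a torus. Since $\gcd(2,19)=1$, the singularity has a single branch and hence a single preimage in $\widetilde{\mathcal{S}}$. Thus $C$ is $\widetilde{\mathcal{S}} \cong T^2$ with seven open disks removed---one around the preimage of $p_0$, and six around the preimages of the transverse intersection points $\mathcal{S}\cap\ell$. It follows that $C$ is a connected surface of genus $1$ with $7$ boundary components, and $\chi(C) = -7$.

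Finally, arrange the height function. The projection $\pi \colon W \cong S^3 \times I \to I$ restricts to a function on $C$ with $(\pi|_C)^{-1}(0) = T_{2,19}$ and $(\pi|_C)^{-1}(1) = T_{6,6}$, which is Morse after a generic small isotopy. Since $C$ is connected and meets both boundary levels of $W$, standard birth-saddle and saddle-death handle cancellations in Morse theory allow us to isotope $C$ rel boundary into a position where $\pi|_C$ has no critical points of index $0$ or $2$: any minimum births a circle that must eventually be merged with another component by some saddle (by connectedness of $C$), so we rearrange the critical values to make that saddle immediately follow the minimum and cancel the pair, and symmetrically for maxima. By $\chi(C) = -7$, the resulting Morse function has exactly seven saddle critical points. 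The hardest part is this Morse rearrangement, but it is a routine consequence of the connectedness of $C$ together with the nonemptiness of both boundary endpoints.
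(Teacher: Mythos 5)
Your construction of the surface itself is fine: taking the unicuspidal sextic with cusp of type $T_{2,19}$, removing a small ball around the cusp and a neighborhood of a generic line, and computing $\chi(C)=-7$, genus $1$, with boundary $T_{2,19}\sqcup T_{6,6}$, is all correct (and is consistent with how the cobordism gets capped off later). The gap is the last step, which is exactly the content of the lemma: the claim that, because $C$ is connected and meets both ends, ``standard birth--saddle and saddle--death cancellations'' let you isotope $C$ rel boundary so that $\pi|_C$ has only saddles. For surfaces embedded in a $4$-manifold this is not routine Morse theory, and as stated it is false. Concretely: let $K$ be any nontrivial slice knot, take a slice disk $D\subset B^4$, and remove a small ball about an interior point of $D$; this gives a connected cobordism $C'\subset S^3\times I$ from the unknot to $K$ with nonempty boundary at both ends and $\chi(C')=0$. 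Your argument, applied verbatim to $C'$, would remove all minima and maxima, leaving a cobordism with no critical points at all, i.e.\ a product, forcing $K$ to be unknotted. The failure point is the rearrangement: cancelling an ambient minimum against a later fusion saddle requires sliding that saddle down past the intervening critical levels, and the membranes traced out by the newly born circle can be obstructed by other sheets of the surface. (This is the same circle of difficulties as the slice--ribbon problem; connectedness alone buys you nothing.) If you want to extract the ``no minima, no maxima'' property from the algebraic curve, you need genuinely more input, e.g.\ Rudolph's theorem that pieces of algebraic curves are (quasipositive) braided surfaces, which carry only saddle points with respect to the natural height function.

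The paper sidesteps all of this by building the cobordism explicitly as a sequence of band attachments, so the absence of minima and maxima is automatic rather than something to be arranged afterwards: one checks that $T_{2,19}$ is the closure of the quasipositive $6$-braid $\beta=\tau^2\sigma_1^3\sigma_3^6\sigma_5^4$ (a $2$-cable of the unknot, with $q$ pinned down by the slice genus of a quasipositive braid closure), and then inserts seven positive bands, using the relations $\sigma_3\tau=\tau\sigma_2$ and $\sigma_2\sigma_3\tau=\tau\sigma_1\sigma_2$, to reach $\tau^6$, whose closure is $T_{6,6}$; seven saddles on a connected cobordism from a knot to a $6$-component link gives exactly genus $1$. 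Your approach could be repaired by invoking Rudolph's braided-surface machinery in place of the Morse-cancellation claim, but as written the key assertion of the lemma is not proved.
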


\begin{proof}
Consider the group $B_6$ of braids on six strands; we let $\sigma_1,\dots,\sigma_5$ denote the five standard generators, and $\tau = \sigma_1\sigma_2\dots\sigma_5$ be their product; the full twist on six strands is then $\tau^6$.

We note the following relation in the braid group that holds for every $i+j\le5$: $\tau^j \sigma_i = \sigma_{i+j}\tau^j$.
In particular, we will make use of the relations $\sigma_3\tau = \tau\sigma_2$ and $\sigma_2\sigma_3\tau = \tau\sigma_1\sigma_2$, and of the fact that $\sigma_1$, $\sigma_3$ and $\sigma_5$ commute.

%\begin{figure}
%\includegraphics[width=0.8\textwidth]{figures/braid2}
%\caption{The braid $\beta = \tau^2\sigma_1^3\sigma_3^6\sigma_5^4$; strands are numbered from top to bottom, and the braid is to be read from left to right. Note that $\sigma_1$, $\sigma_3$, and $\sigma_5$ commute, hence we can draw them as vertically aligned.}\label{f:braid}
%\end{figure}
\begin{figure}
\includegraphics[width=0.8\textwidth]{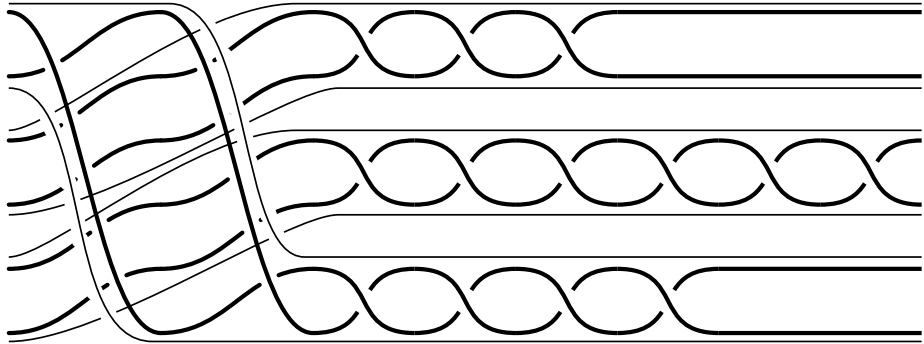}
\caption{The braid $\beta = \tau^2\sigma_1^3\sigma_3^6\sigma_5^4$; strands are numbered from top to bottom, and the braid is to be read from left to right. Note that $\sigma_1$, $\sigma_3$, and $\sigma_5$ commute, hence we can draw them as vertically aligned. The thinner lines exhibit an unknotted solid torus that contains the braid, hence showing that the closure of $\beta$ is a 2-cable of the unknot.}\label{f:braid}
\end{figure}

We claim that $T_{2,19}$ is the closure of the 6-braid $\beta = \tau^2\sigma_1^3\sigma_3^6\sigma_5^4$ (see Figure~\ref{f:braid} above).
In fact, the closure of $\beta$ clearly represents a $2$-cable of the unknot, seen as a 3-braid; therefore, $\widehat\beta$ is a torus knot $T_{2,q}$ for some $q$.
Moreover, since $\beta$ is quasipositive, the slice genus of its closure is determined by the exponent of $\beta$ \cite{R}.
It follows that $g_*(\widehat\beta) = 9$, thus $\widehat\beta = T_{2,19}$, as claimed.

We now exhibit the cobordism, by adding bands corresponding to generators. Indeed, let us write:
\[
\tau^2\sigma_1^3\sigma_3^6\sigma_5^4 = \tau^2(\sigma_1\sigma_3\sigma_5)\sigma_3(\sigma_1\sigma_3\sigma_5)\sigma_3(\sigma_1\sigma_3\sigma_5)\sigma_3\sigma_5;
\]
we can insert bands corresponding to $\sigma_2$ and $\sigma_4$ in each $\sigma_1\sigma_3\sigma_5$ factor, and one extra $\sigma_4$ in the next-to-last position, thus obtaining:
\[
\gamma := \tau^2\cdot\tau\cdot\sigma_3\cdot\tau\cdot\sigma_3\cdot\tau\cdot\sigma_3\cdot\sigma_4\cdot\sigma_5.
\]
We now use the relations $\sigma_3\tau = \tau\sigma_2$ and $\sigma_2\sigma_3\tau = \tau\sigma_1\sigma_2$ mentioned above to write:
\[
\gamma = \tau^3\sigma_3\tau\sigma_3\tau\sigma_3\sigma_4\sigma_5 = \tau^4\sigma_2\sigma_3\tau\sigma_3\sigma_4\sigma_5 = \tau^5\sigma_1\sigma_2\sigma_3\sigma_4\sigma_5 = \tau^6,
\]
and hence $\widehat\gamma = T_{6,6}$.
\end{proof}

%As a corollary, the branched double cover of the cobordism above is obtained by attaching only 2-handles from $-L_9 = L(19,18) = \Sigma(T_{2,19})$ to get to $\Sigma(T_{6,6})$; reversing the cobordism, we get a 2-handle cobordism from $\Sigma(T_{6,6})$ to $\Sigma(T_{2,19})$ as well.

We can now cap off the cobordism in $\CP^2$ with six discs, since the Hopf link $T_{6,6}$ is the link at infinity of a degree-6 curve.
Filling the lower boundary component with the Milnor fiber of $T_{2,19}$ (i.e. the pushed-in canonical Seifert surface) we obtain a genus-10 smooth surface $F$ in the homology class $6h\in H_2(\CP^2)$.

The double cover of $\CP^2$ branched over $F$ is a spin 4-manifold $\mathcal{K}$, since the homology class $[F]$ is even, but not divisible by 4~\cite{Nagami}; moreover, the Euler characteristic is 24, since the surface $F$ is of genus 10.
Since $F$ contains the Milnor fiber of $T_{2,19}$, and the double cover of $B^4$ branched over this surface is $P_{19}$~\cite{AK}, we see that $P_{19}$ embeds in $\mathcal{K}$.

\begin{lemma}
The complement of $P_{19}$ in $\mathcal{K}$ admits a handle decomposition with a single $0$-handle and four even-framed $2$-handles.
\end{lemma}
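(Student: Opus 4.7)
The plan is to build a handle decomposition of $W = \mathcal K\setminus P_{19}$ from the two natural pieces of the branched-cover description and then to simplify. Decompose $\CP^2 = B^4\cup(S^3\times I)\cup\nu$, where $\nu$ is a tubular neighborhood of $\CP^1\subset\CP^2$, and correspondingly write $F = M\cup C\cup(\text{six discs})$, where $M$ is the Milnor fiber of $T_{2,19}$ in $B^4$ and the six discs are parallel fiber discs of $\nu$ bounded by $T_{6,6}$. Taking branched double covers yields $\mathcal K = P_{19}\cup\tilde C\cup\tilde\nu$, so $W = \tilde C\cup_{\Sigma(T_{6,6})}\tilde\nu$.

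The piece $\tilde\nu$ is the pullback of $\nu$ (a $D^2$-bundle over $S^2$ of Euler number $+1$) under the double cover $\Sigma_2\to S^2$ branched at six points, and is therefore the $D^2$-bundle of Euler number $+2$ over the genus-$2$ surface $\Sigma_2$. It admits a standard handle decomposition with one $0$-handle, four $1$-handles (one pair per genus), and one $2$-handle of framing $+2$ attached along the surface relator $[a_1,b_1][a_2,b_2]$. The piece $\tilde C$, by the Akbulut--Kirby algorithm applied band by band, is $L_{19}\times I$ with seven $2$-handles attached, one for each of the three $\sigma_2$ and four $\sigma_4$ bands in the factorization of $\gamma$. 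Gluing these descriptions along $\Sigma(T_{6,6})$ gives a decomposition of $W$ with one $0$-handle, four $1$-handles, and eight $2$-handles, compatible with $\chi(W) = \chi(\mathcal K) - \chi(P_{19}) = 24-19 = 5$.

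The heart of the proof is to exhibit four canceling $1$--$2$ pairs. I would trace the attaching circles of the seven $\tilde C$-$2$-handles through $\partial\tilde\nu = \Sigma(T_{6,6})$ and identify four of them whose attaching curves each meet the belt sphere of a distinct $1$-handle of $\tilde\nu$ exactly once; performing these cancellations simultaneously leaves a decomposition of $W$ with one $0$-handle, no $1$-handles, and four $2$-handles. Because $W$ inherits a spin structure from $\mathcal K$ and now contains no $1$-handles, its intersection form is even, so the four remaining framings are automatically even.

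The main technical obstacle is the geometric cancellation step: concretely identifying four of the seven band-$2$-handles and verifying the intersection pattern between their attaching curves in $\Sigma(T_{6,6})$ and the belt spheres of the four $1$-handles of $\tilde\nu$. This amounts to an explicit Kirby-calculus computation using the lift of the braid word $\gamma = \tau^6$ through the hyperelliptic involution that realizes $\Sigma_2\to S^2$.
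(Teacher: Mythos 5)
Your setup is sound and runs parallel to the paper's: the decomposition $W=\widetilde C\cup_{\Sigma(T_{6,6})}\widetilde\nu$, the identification of $\widetilde\nu$ as the Euler-number-$2$ disk bundle over the genus-$2$ surface, the count of one $0$-handle, four $1$-handles and eight $2$-handles ($\chi=5$), and the observation that once no $1$-handles remain the framings are forced to be even by spinness, are all correct. But the lemma's entire content is the elimination of those four $1$-handles, and your proposal only announces it: ``I would trace the attaching circles\dots and identify four of them whose attaching curves each meet the belt sphere of a distinct $1$-handle exactly once.'' That is precisely the step you flag as the main technical obstacle, and it is not carried out. Note that handle cancellation requires \emph{geometric} intersection number one, not merely algebraic; the single $2$-handle of $\widetilde\nu$ is attached along the surface relator $[a_1,b_1][a_2,b_2]$ and so cannot cancel any $1$-handle, so all four cancellations must come from the seven lifted band handles, whose attaching curves in the circle bundle $\Sigma(T_{6,6})$ you have not computed. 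You have not even verified that $W$ is simply connected (van Kampen applied to $\mathcal K=P_{19}\cup W$ only says $\pi_1(W)$ is normally generated by the image of $\pi_1(L_{19})\cong\Z/19\Z$), which is a necessary consequence of the statement. So as written there is a genuine gap exactly where the proof has to do work.

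For comparison, the paper avoids this cancellation problem altogether by modifying the branch surface rather than the cover: since $F$ is connected, pairs of the six capping disks can be joined by bands in $C$ which are then lifted into the $4$-handle (the trick from Section 5 of Akbulut--Kirby), and after five such moves $F$ has a handle decomposition with a single $0$-handle, some $1$-handles, and a single $2$-handle. The Akbulut--Kirby algorithm applied to such a surface produces a handle decomposition of $\mathcal K=\Sigma(F)$ with no $1$- or $3$-handles in which $P_{19}$ sits as a sub-handlebody; turning the remaining four $2$-handles and the $4$-handle upside down gives the complement with one $0$-handle and four even-framed $2$-handles. If you want to salvage your route, you would either have to carry out the explicit Kirby-calculus cancellation in $\partial\widetilde\nu$ you describe, or reduce the number of branch disks over the $4$-handle region before taking the cover, which is exactly the paper's maneuver.
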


\begin{proof}
$F$ is constructed so that the standard Morse function on $\CP^2$ induces a handle decomposition of $F$ with a single 0-handle, twenty-five 1-handles, and six 2-handles. We now apply work of Akbulut and Kirby~\cite{AK} (see also \cite[Section 6.3]{GS}) to determine the structure of the handle decomposition of the double cover $\Sigma(F)$ of $\CP^2$ branched over $F$. The double cover of $B^4$ branched over a properly embedded surface with a single 0-handle and no 2-handles admits a handle decomposition with a single 0-handle and $k$ 2-handles with even framing, where $k$ is the number of 1-handles of the surface. If we take the double cover of $B^4 \cup 2$-handle branched over such a surface (whose boundary is disjoint from the attaching circle of the 2-handle), the additional 2-handle lifts to two 2-handles in the cover. Now the branched double cover of $B^4$ over the Milnor fiber of $T_{2,19}$ is $P_{19}$, and by the preceding remarks it follows that the branched double cover of the cobordism $C$ constructed above is obtained by attaching only 2-handles from $L_{19} = L(19,18) = \Sigma(T_{2,19})$ to get to $\Sigma(T_{6,6})$. The only potential difficulty is when we take the double cover of $\CP^2 \setminus B^4$ branched over the six disks (the 2-handles of $F$). However, here we can apply a trick from \cite[Section 5]{AK}. Since $F$ is connected, and each disk is glued onto one of the six upper boundary components of $C$, we can connect any pair of disks with an embedded band in $C$. This band can then be ``lifted'' to the 4-handle, connecting the pair of disks and so eliminating one of the 2-handles of $F$. After performing this move five times, we will have isotoped $F$ so that it has a handle decomposition with a single 0-handle and single 2-handle. Then the work of~\cite{AK} implies that the branched double cover admits a handle decomposition without 1- or 3-handles. Since this handle decomposition includes $P_{19}$, the proof is completed by recalling that $\Sigma(F)=\mathcal{K}$ and that $b_2(\mathcal{K}) = 22$. 
\end{proof}

We have not attempted to verify whether $\mathcal{K}$ is diffeomorphic to the $K3$ surface; nor have we attempted to determine the 4-component link in $S^3$ that admits an even-framed surgery to $L_{19}$, whose existence is guaranteed by the argument above.

\begin{claim}
$\emb(L_{19})=4$.
\end{claim}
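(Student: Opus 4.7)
The plan is to combine the preceding lemma with a Rokhlin-plus-$10/8$ obstruction, mirroring the arguments given earlier for $\emb(\Sigma(2,3,5)) = 8$ and $\emb(L_{12}) = 11$.

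For the upper bound, I would turn the complement of $P_{19}$ in $\mathcal{K}$ upside down to exhibit $L_{19}$ as the boundary of a $4$-manifold $W$ built from a single $0$-handle and four even-framed $2$-handles; the doubling argument of Theorem~\ref{all embed} then yields an embedding of $L_{19}$ into $\#_4 S^2\times S^2$, giving $\emb(L_{19})\leq 4$.

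For the lower bound, suppose for contradiction that $L_{19}$ embeds in $\#_m S^2\times S^2$ with $m\leq 3$. Since $L_{19}$ is a rational homology sphere it separates, yielding two spin pieces $U,V$ with $\partial U = L_{19}$ and $\partial V = -L_{19}$. Because $|H_1(L_{19})|=19$ is odd, $L_{19}$ carries a unique spin structure, so any two spin fillings glue along it. Gluing $U$ to $-P_{19}$ produces a closed spin $4$-manifold of signature $\sigma(U)+18$; by Rokhlin, $\sigma(U)\equiv -2\pmod{16}$. Combined with Novikov additivity $\sigma(U)+\sigma(V)=0$, the inequalities $|\sigma(U)|\leq b_2(U)$ and $|\sigma(V)|\leq b_2(V)$, and $b_2(U)+b_2(V)=2m\leq 6$, this forces $\sigma(U)=-2$ and $\sigma(V)=+2$.

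To conclude, I would apply Furuta's $10/8$-Theorem to the two closed spin manifolds $X_1 = -P_{19}\cup U$ and $X_2 = P_{19}\cup V$, each of which has $|\sigma|=16$ and $b_2$ equal to $18+b_2(U)$ and $18+b_2(V)$ respectively. The bound $b_2(X_i)\geq \tfrac{10}{8}\cdot 16 + 2 = 22$ then gives $b_2(U),\,b_2(V)\geq 4$, whence $2m=b_2(U)+b_2(V)\geq 8$, contradicting $m\leq 3$. The only real work has been done in the preceding lemma; once that embedding is in hand, the lower bound follows by the same template used throughout Sections~\ref{Brieskorn} and~\ref{lens}.
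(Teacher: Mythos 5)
Your proposal is correct and follows essentially the same route as the paper: the upper bound comes from the preceding lemma plus the doubling argument of Theorem~\ref{all embed}, and the lower bound uses Rokhlin to pin $\sigma(U)=-2$, $\sigma(V)=2$ and then Furuta's 10/8-Theorem applied to the closed spin manifolds obtained by gluing on $P_{19}$ or $\overline{P_{19}}$. The only cosmetic difference is that you apply the 10/8 bound to both glued-up pieces (getting $b_2(U),b_2(V)\geq 4$), whereas the paper applies it once to whichever piece has $b_2\leq 3$; the contradiction is the same.
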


\begin{proof}
We have shown that $\emb(L_{19}) \leq4$. The proof of the necessary lower bound is by now a familiar argument. If $L_{19}$ embeds in $\#_3 S^2 \times S^2$, then it would split $\#_3 S^2 \times S^2$ into two spin pieces $U$ and $V$, say, with $\partial U =L_{19}$ and $\partial V = \overline{L_{19}}$. Since $\mu(L_{19})=-18 \equiv -2$ (mod 16), we have $\sigma(U)= -2$ and $\sigma(V)=2$. At least one of $U$ or $V$ must have $b_2 \leq3$, and gluing this manifold to either $P_{19}$ or $\overline{P_{19}}$ results in a closed spin 4-manifold $X$ with
\[
 b_2(X)\leq 21  < \frac{10}{8}(16) +2= \frac{10}{8}|\sigma(X)| +2,
\]
which contradicts the 10/8-Theorem.
\end{proof}

At last, we can sum up everything we have obtained to prove Theorem~\ref{t:smallLn}.

\begin{proof}[Proof of Theorem~\ref{t:smallLn}]
In the previous claims we have proven that $\emb(L_2) = 1$ and $\emb(L_{12}) = 11$.
By Theorem~\ref{steps}(2), $\emb(L_n) = n-1$ for $n=2,\dots, 12$.

Analogously, since $\emb(L_{12})= 11$ and $\emb(L_{19}) = 4$, by Theorem~\ref{steps}(2), $\emb(L_n) = 23-n$ for $n=12,\dots, 19$.
\end{proof}

\begin{remark}
We observe here that the computation of the embedding numbers of Theorem~\ref{t:smallLn} can be also done with a case-by-case analysis, without appealing to Theorem~\ref{steps}.
Indeed, one can combine explicit constructions (analogous for those of $L_{12}$, $L_{17}$, and $L_{19}$) either with Rokhlin's theorem (for small $n$) or with the 10/8-Theorem (for large $n$) to achieve the same result.
\end{remark}

%We are interested in the branched double cover of the cobordism, capped off with the double cover of the disc bundle over $\CP^1$ with Euler number $+1$, branched over six fibres. The latter is the disc bundle over the surface of genus 2, with Euler number $+2$.

\end{subsection}
\end{section}
%%%%%%%%%%%%%%%%%%%%%%%%%%%%%%%%%%%%%%%%%%%%%%%%%%%%%%%%
%%%%%%%%%%%%%%%%%%%%%%%%%%%%%%%%%%%%%%%%%%%%%%%%%%%%%%%%
\begin{section}{Exact calculations for arbitrarily large embedding numbers}\label{exact}

Finally we show how to construct integral and rational homology 3-spheres with arbitrarily large embedding numbers, such that we can give exact calculations provided that we assume the validity of the 11/8-Conjecture.

Let $K_n$ be the spin 4-manifold $K_n = \#_{2n} K3$. Then the intersection form $Q_{K_n}$ of $K_n$ is isomorphic to $4nE_8\oplus 6nH$
(note that our convention is that the $E_8$ form is \emph{negative} definite) and so has signature $-32n$ and rank $44n$. Now $K_n$ admits a handle decomposition without 1-handles or 3-handles.
With this handle decomposition we can perform handle slides so that in the basis for $H_2(K_n)$ corresponding to the cores of the 2-handles, $Q_{K_n}$ is given by $4nE_8\oplus 6nH$.
We can further perform handle slides to obtain a basis such that each of the $n$ copies of $\oplus 6H$ looks like $\left(\begin{array}{cc}Q & I \\ I & O\end{array}\right)$, where all submatrices are $6\times 6$, and 
$Q$ is defined by the following matrix:\\

$$\begin{pmatrix}
     -2 & 1 & 0 & 0 & 0 & 0 \\
    1 & -2 & 1 & 0 & 0 & 0 \\
    0 & 1 & -2 & 0 & 0 & 0 \\
    0 & 0 & 1 & -2 & 1 & 0 \\
    0 & 0 & 0 & 1 & -2 & 1 \\
    0 & 0 & 0 & 0 & 1 & -2 
\end{pmatrix}.$$\\

Observe that this is possible since doubling the linear plumbing of six $-2$-disk bundles over $S^2$ yields $\#_6 S^2 \times S^2$. Now Let $U_n$ be the sub-handlebody of $K_n$ formed by
taking the 0-handle and the 2-handles corresponding to each basis element in the $4n$ $E_8$'s and the first six basis elements in each $\oplus 6H$ block.
Then the intersection form $Q_{U_n}$ of $U_n$ will be $4nE_8 \oplus  nQ$.

One can check that $Q_{U_n}$ is negative definite of rank $38n$, and will have determinant $\pm 7^n$. Hence the boundary $Y_n = \partial U_n$ will be a $\mathbb{Z}/2\mathbb{Z}$-homology sphere,
and therefore has a unique spin structure. Let $V_n$ be the closure of $K_n \setminus U_n$. Then by Novikov additivity and Mayer--Vietoris we get that $V_n$ is positive
definite of rank $6n$, and so $\overline{V_n}$ is negative definite with $\partial \overline{V_n} = Y_n$.
Since $\overline{V_n}$ is a spin 2-handlebody, by doubling we get $\#_{6n} S^2 \times S^2$ and hence $\emb(Y_n) \leq 6n$.

\begin{proposition}
If the $11/8$-Conjecture is true then $\emb(Y_n) = 6n$.
\end{proposition}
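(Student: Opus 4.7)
The upper bound $\emb(Y_n) \leq 6n$ has already been established by the explicit construction, so the task is to verify the lower bound $\emb(Y_n) \geq 6n$. The plan is to argue by contradiction: suppose $Y_n$ embeds in $\#_m S^2\times S^2$ for some $m < 6n$. Since $|H_1(Y_n)|=7^n$ is odd, $Y_n$ is a $\Z/2\Z$-homology sphere and so carries a unique spin structure, and in particular the embedding separates $\#_m S^2\times S^2$ into two spin $4$-manifolds $A$ and $B$ with $\partial A = Y_n$, $\partial B = -Y_n$, whose spin structures automatically agree along $Y_n$. Set $s = \sigma(A)$; by Novikov additivity $\sigma(B) = -s$, and by Mayer--Vietoris $b_2(A) + b_2(B) = 2m$.

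The central idea is to cap off the embedding with the filling $U_n$ on \emph{both} sides at once. Concretely, I would form the two closed spin $4$-manifolds
\[
X_1 \;=\; U_n \cup_{Y_n} B, \qquad X_2 \;=\; (-U_n) \cup_{Y_n} A,
\]
so that $b_2(X_1) + b_2(X_2) = 2m + 76n$, while $\sigma(X_1) = -(s+38n)$ and $\sigma(X_2) = s+38n$. Assuming the $11/8$-Conjecture, applying it to each piece and summing the two resulting inequalities collapses to
\[
2m + 76n \;\geq\; \tfrac{11}{4}\,|s + 38n|.
\]

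To convert this into a bound on $m$, I would use the elementary inequality $|\sigma|\le b_2$ applied to both $A$ and $B$, which gives $|s| \leq \min(b_2(A),b_2(B)) \leq m$. Since we are assuming $m < 6n < 38n$, the reverse triangle inequality then yields $|s+38n| \geq 38n - m > 0$. Substituting this into the combined $11/8$-estimate and simplifying produces $19m \geq 114n$, i.e.\ $m \geq 6n$, contradicting the standing assumption $m < 6n$. Combined with the upper bound already in hand, this gives $\emb(Y_n) = 6n$.

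The main subtlety, rather than any technical step, lies in selecting the right pair of closed $4$-manifolds to feed into $11/8$. Using only one of the two gluings, or replacing one copy of $U_n$ with $\pm V_n$, leaves too much freedom in the sign and magnitude of $s$ and does not close the gap; it is the symmetric use of $U_n$ on both sides of the embedding that produces the same $|s+38n|$ in both instances, allowing the weak bound $|s|\le m$ to finish the job. It is also precisely this small numerical margin that forces the use of the $11/8$-Conjecture rather than Furuta's $10/8$-Theorem: the $+2$ additive correction in $10/8$ would only push the argument down to $m \gtrsim 6n$ asymptotically, and is just too weak to rule out each finite value $m < 6n$.
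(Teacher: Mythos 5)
Your proof is correct and is essentially the paper's argument: the closed manifolds $U_n\cup_{Y_n}B$ and $(-U_n)\cup_{Y_n}A$ are exactly the two cappings the paper feeds into the $11/8$-inequality, and summing the two inequalities together with the trivial bound $|\sigma|\le b_2$ on the embedding pieces yields $19m\ge 114n$ just as in the paper. The only cosmetic differences are the contradiction framing and your use of the reverse triangle inequality where the paper drops absolute values sign by sign.
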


\begin{proof}
Fix a natural number $n$. $Y_n$ embeds in $\#_{\emb(Y_n)} S^2 \times S^2$ and splits $\#_{\emb(Y_n)} S^2 \times S^2$ into two spin pieces, $X_1$ and $X_2$.
We can assume that $\partial X_1 = Y_n$ and $\partial X_2 = \overline{Y_n}$. Then let $W_1 = X_1 \cup \overline{U_n}$ and $W_2 = U_n \cup X_2$, where in each case the 4-manifolds
are glued along $Y_n$. Now $W_1$ and $W_2$ are spin 4-manifolds, and so assuming the validity of the 11/8-Conjecture we obtain $b_2(W_1) \geq \frac{11}8|\sigma(W_1)|$
and $b_2(W_2) \geq \frac{11}8|\sigma(W_2)|$. By Novikov additivity and Mayer--Vietoris (and applying what we know about $U_n$) these
become $b_2(X_1) + 38n \geq \frac{11}8(38n+\sigma(X_1))$ and $b_2(X_2) + 38n \geq \frac{11}8(38n-\sigma(X_2))$. Adding these two inequalities
gives $b_2(X_1) + b_2(X_2) +76n \geq \frac{11}8 76n + \frac{11}8(\sigma(X_1) - \sigma(X_2))$. Since $X_1 \cup X_2 = \#_{\emb(Y_n)} S^2 \times S^2$ this simplifies
to $2\emb{(Y_n)} +76n \geq \frac{11}8 76n + \frac{11}8 2\sigma(X_1)$, and after rearranging sides $2\emb{(Y_n)} - \frac{11}8 2\sigma(X_1) \geq \frac38 76n$. 
Finally, since $\sigma(X_1) \geq -\emb{(Y_n)}$, this becomes $\frac{19}4\emb{(Y_n)} \geq \frac38 76n$. Upon simplifying we get that $\emb{(Y_n)} \geq 6n$,
and since we saw previously that $\emb(Y_n) \leq 6n$ it follows that $\emb(Y_n) = 6n$.
\end{proof}

We emphasize that this construction consists of many choices, and each of these choices will affect the resulting manifolds $Y_n$. We can apply a similar argument to compute embedding numbers for integral homology spheres by splitting the intersection form of $\#_{8n} K3$.
The intersection form is $16n E_8 \oplus 24n H$, which is isomorphic to $19n E_8 \oplus -3n E_8$. Applying the same technique as above
splits $\#_{8n} K3$ along an integral homology sphere $Z_n$ that will have embedding number $24n$. Note that this splitting implies that $Z_n$ bounds two simply connected spin 4-manifolds, one of which has intersection form $16nE_8$ and the other $3nE_8$. In particular, the $Z_n$ bound spin, simply connected, negative definite 4-manifolds that have different $b_2$, answering Question 5.2 in~\cite{Tange}.

Our technique has a similar flavor to an argument of Stong~\cite{Stong}, who proved that if the intersection form $Q_X$ of a simply connected closed 4-manifold $X$
decomposes as the direct sum of unimodular forms $U_1 \oplus U_2$, then $X$ can be smoothly decomposed into two simply connected pieces $X_1$ and $X_2$ with $Q_{X_1} = U_1$ and $Q_{X_2} = U_2$. (Note that this is insufficient for our argument; we need that each piece is a 2-handlebody.)
Stong's theorem is a strengthening of a result of Freedman and Taylor~\cite{FT}, which only guarantees that $H_1(X_i) = 0$ rather than that the $X_i$ are simply connected.
Stong's splitting theorem depends on the following structure theorem.

\begin{theorem}[Stong~\cite{Stong}]\label{structure}
Let $X$ denote a simply connected closed smooth $4$-manifold. Then $X$ admits a handlebody decomposition $\mathcal{H}$ with $2$-handles $\{ H_1, \cdots, H_m\}$ such that the following holds.
\begin{enumerate}
\item The attaching circles for the handles $H_1, \cdots, H_r$ represent a free basis for $\pi_1(X^{(1)})$ (where $X^{(1)}$ denotes the union of the $0$-handle and the $1$-handles of $X$),
and the attaching circles for the other $2$-handles are null-homotopic in $\pi_1(X^{(1)})$.
\item The belt spheres of the handles $H_{r+1}, \cdots, H_{r+s}$ represent a free basis for $\pi_1(\overline{X}^{(3)})$ (where $\overline{X}^{(3)}$ denotes the union of the $4$-handle and the $3$-handles of $X$),
and the belt spheres for the other $2$-handles are null-homotopic in $\pi_1(\overline{X}^{(3)})$.
\end{enumerate}
\end{theorem}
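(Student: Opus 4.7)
The plan is to begin with an arbitrary handle decomposition $\mathcal{H}_0$ of $X$ and modify it through handle slides, isotopies, and the insertion or deletion of cancelling $2/3$-handle pairs --- all operations that preserve the diffeomorphism type of $X$ --- until properties (1) and (2) are simultaneously satisfied.

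First I would focus on property (1). Let $k$ denote the number of $1$-handles in $\mathcal{H}_0$, so that $\pi_1(X^{(1)})$ is the free group $F_k$. Because $\pi_1(X)=1$ and neither $3$- nor $4$-handles can kill any $\pi_1$, the attaching circles $\gamma_1,\dots,\gamma_m$ of the $2$-handles, viewed as conjugacy classes in $F_k$, must normally generate $F_k$. A handle slide of one $2$-handle over another realizes a Nielsen-type operation $\gamma_i\mapsto \gamma_i \gamma_j^{\pm 1}$ on this tuple, and insertion of a cancelling $2/3$-pair appends a null-homotopic attaching curve. The key algebraic input I would invoke is a stabilized Andrews--Curtis-style statement: any normally generating tuple of $F_k$ can, after Nielsen transformations (allowing conjugation) together with the insertion of trivial elements, be brought to the form $(x_1,\dots,x_k,1,\dots,1)$ with $x_1,\dots,x_k$ a free basis of $F_k$. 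Translated geometrically, this rearranges the $2$-handles so that $H_1,\dots,H_r$ with $r=k$ have attaching circles forming a free basis of $\pi_1(X^{(1)})$, while the remaining $2$-handles are attached along null-homotopic curves.

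To obtain property (2), I would pass to the dual handle decomposition, obtained by turning $X$ upside down. Under this duality $3$-handles become $1$-handles and the belt spheres of the $2$-handles become attaching circles, so applying the argument used for (1) to the dual decomposition produces the desired rearrangement of belt spheres. The delicate point is that the rearrangement for (1) must remain compatible with that for (2); carrying out the second round of slides naively could destroy the basis structure already achieved at the $1$-handle end. To handle this I would lock the $r$ essential $2$-handles $H_1,\dots,H_r$ in place and allow subsequent handle slides only among the remaining null-homotopic $2$-handles and the $3$-handles, exploiting the fact that slides involving only null-homotopic $2$-handle attaching curves do not affect the basis property at the bottom of the decomposition.

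The hard part will be reconciling the two rearrangements and establishing the stabilized Andrews--Curtis lemma that underlies them. The classical Andrews--Curtis conjecture --- which asserts that balanced presentations of the trivial group reduce to the trivial presentation via Nielsen moves and conjugations --- is a famous open problem, but its stabilized form, where one is free to create and destroy trivial relators, is known, and it is precisely this extra flexibility that cancelling $2/3$-pairs provide. Once this algebraic lemma is in hand, the remainder of the argument is geometric bookkeeping: translating the algebraic moves into the corresponding handle slides, and then showing that the dual construction can be executed on the restricted set of handles left free after (1) has been secured.
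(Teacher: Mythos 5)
First, note that the paper does not prove this statement at all: it is quoted verbatim from Stong~\cite{Stong} as an external input, so there is no internal proof to compare with; your attempt has to be measured against Stong's argument, which does not (and cannot) proceed the way you propose.

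The fatal problem is your key algebraic input. You assert that the ``stabilized Andrews--Curtis-style statement'' --- that any normally generating tuple in $F_k$ can be brought to the form $(x_1,\dots,x_k,1,\dots,1)$ by Nielsen moves, conjugations, and insertion of trivial entries --- is known. It is not: this is precisely the stable (generalized) Andrews--Curtis conjecture, which is open. Within the moves you allow yourself (2-handle slides, which realize $w_i\mapsto w_i\,u w_j^{\pm1}u^{-1}$, and creation of cancelling $2/3$- or $1/2$-pairs, which append a trivial relator or a generator-relator pair), reaching a tuple of the form (free basis, trivial elements) is equivalent to stable AC-triviality of the presentation $\langle x_1,\dots,x_k \mid w_1,\dots,w_m\rangle$ of the trivial group: any tuple that is a free basis is AC-equivalent to the standard one, so your target is not genuinely weaker, and the extra (unbalanced) relators do not place you in any known special case --- indeed your lemma, applied to a balanced normally generating tuple, would prove the stable AC conjecture outright. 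What \emph{is} known is only the simple-homotopy statement (contractible $2$-complexes are simple-homotopy trivial), whose geometric realization requires deformations through higher-dimensional complexes and is not implemented by $2$-handle slides and $2/3$-pairs alone. So the core of your argument rests on an open conjecture, and the same objection undermines the dual step for (2). A correct proof must exploit the freedom to change the handle decomposition wholesale (new Morse functions, creation and cancellation of handles of all indices), where the relation to the original $\pi_1$-presentation is not governed by AC-moves; this is what Stong's argument does, and it is also why the theorem asks only that the distinguished attaching circles represent a free basis of $\pi_1(X^{(1)})$ (respectively belt spheres in $\pi_1(\overline{X}^{(3)})$) rather than anything that would let one cancel the $1$- and $3$-handles, which remains an open problem. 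Your concluding remark that the two rearrangements can be reconciled by ``locking'' $H_1,\dots,H_r$ is also only a hope --- slides affect attaching-circle and belt-sphere data simultaneously, and the handles needed for the basis in (2) are exactly ones you have made null-homotopic at the bottom --- but the argument already fails at the earlier step.
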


Finally, we use this structure theorem to prove a result unrelated to the main theme of the paper, but perhaps interesting in its own right. When the 4-manifold $X$ is non-spin, the result below follows directly from the work of Stong~\cite{Stong} mentioned in the paragraph preceding Theorem~\ref{structure}, and therefore the most interesting case is when $X$ is spin.

\begin{theorem}\label{definite splitting}
Any simply connected $4$-manifold $X$ can be decomposed as $X = X_1 \cup X_2$, where $X_1$ and $X_2$ are simply connected $4$-manifolds that are positive definite and negative definite, respectively, glued along a rational homology sphere.
\end{theorem}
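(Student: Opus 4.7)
The plan is to apply Stong's structure theorem (Theorem~\ref{structure}) to obtain a handle decomposition of $X$ with only $0$-, $2$-, and $4$-handles, and then to split the collection of two-handles into two groups corresponding to a positive-definite primitive sublattice of $H_2(X)$ and its complement. Specifically, Theorem~\ref{structure} furnishes a handle decomposition whose first $r$ two-handles cancel the $1$-handles and whose next $s$ two-handles cancel (dually) the $3$-handles. After these cancellations we are left with a single $0$-handle, $n=b_2(X)$ two-handles $H_1,\dots,H_n$ forming a $\mathbb{Z}$-basis of $H_2(X)$, and a single $4$-handle; subsequent handle slides allow realizing any base change in $GL_n(\mathbb{Z})$.

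Set $k:=b_2^+(X)$. The key lattice-theoretic observation is that $H_2(X)$ contains a primitive sublattice $L$ of rank $k$ on which the intersection form $Q_X$ is positive definite: take any positive-definite $\mathbb{Q}$-subspace $V_+ \subset H_2(X)\otimes\mathbb{Q}$ of dimension $k$ (obtainable via Gram--Schmidt orthogonalisation over $\mathbb{Q}$) and set $L := H_2(X)\cap V_+$; this is primitive by construction, of rank $k$, and positive definite. Choose a $\mathbb{Z}$-basis $\{w_1,\dots,w_k\}$ of $L$, extend it to a basis $\{w_1,\dots,w_n\}$ of $H_2(X)$ (possible because $L$ is primitive), and perform handle slides so that $H_i$ represents $w_i$ for every $i$.

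Now let $X_1 := B^4 \cup H_1 \cup \dots \cup H_k$ and $X_2 := X\setminus \Int(X_1)$. Both pieces are simply connected, as $X_1$ is constructed from $B^4$ by attaching $2$-handles, and $X_2$ admits a dual handle decomposition whose $0$-handle is the original $4$-handle of $X$. The intersection form of $X_1$ is $Q_X|_L$, which is positive definite with non-zero determinant, so $\partial X_1$ is a rational homology sphere. Novikov additivity and the Mayer--Vietoris exact sequence (which splits rationally since $\partial X_1$ is a rational homology sphere) then give
\[
\sigma(X_2)=\sigma(X)-k=-b_2^-(X), \qquad b_2(X_2)=b_2(X)-k=b_2^-(X),
\]
so $\sigma(X_2)=-b_2(X_2)$; combined with the fact that $Q_{X_2}$ is non-degenerate (because $\partial X_2 = -\partial X_1$ is a rational homology sphere), this forces $Q_{X_2}$ to be negative definite, as required.

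The hardest part is the very first step: for a general smooth simply connected $4$-manifold it is unknown whether a handle decomposition without $1$- or $3$-handles exists, so one really needs the cancellation property built into Theorem~\ref{structure}. The remainder of the argument is elementary linear algebra (the lattice splitting, which is nontrivial in the spin case precisely because $Q_X$ need not decompose as a sum of a positive-definite and a negative-definite unimodular form) together with standard $4$-manifold handlebody theory.
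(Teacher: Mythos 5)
Your proof breaks down at the very first step. Stong's structure theorem (Theorem~\ref{structure}) does \emph{not} say that the handles $H_1,\dots,H_r$ cancel the $1$-handles, nor that $H_{r+1},\dots,H_{r+s}$ cancel the $3$-handles: its conclusion is purely homotopical (the attaching circles of $H_1,\dots,H_r$ \emph{represent} a free basis of $\pi_1(X^{(1)})$, and dually for the belt spheres in $\pi_1(\overline{X}^{(3)})$). Geometric cancellation would require each attaching circle to meet the belt sphere of the corresponding $1$-handle in a single point (up to isotopy of the whole attaching link), and nothing in the theorem provides that. If Stong's theorem did yield your claimed cancellation, every closed simply connected smooth $4$-manifold would admit a handle decomposition with only $0$-, $2$-, and $4$-handles -- precisely the open problem you yourself flag in your last paragraph. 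So the step ``after these cancellations we are left with a single $0$-handle, $n=b_2(X)$ two-handles, and a single $4$-handle'' is a genuine gap, and everything downstream is built on it.

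The paper avoids this by never cancelling: it sets $X_1 := X^{(1)}\cup H_1\cup\dots\cup H_r\cup(\text{a chosen subset of the homology handles }H_{r+s+1},\dots,H_m)$ and lets $X_2$ be the remaining handles, so the $1$-handles stay inside $X_1$ and the $3$- and $4$-handles stay inside $X_2$; Stong's homotopical conditions are used only to conclude that each piece is simply connected and that the intersection forms are the Gram matrices of the chosen sub-bases. Your argument can be repaired by adopting exactly this division, and then your lattice-theoretic step does go through and is in fact a nice simplification: choosing any primitive positive-definite sublattice $L\subset H_2(X)$ of rank $b_2^+(X)$, extending a basis of $L$ to a basis of $H_2(X)$ realized by handle slides among the homology handles, and then forcing negative definiteness of the other piece via Novikov additivity and the rational Mayer--Vietoris splitting avoids the paper's appeal to the classification of indefinite unimodular forms and its separate spin/non-spin case analysis (at the mild cost of losing the paper's observation that in the non-spin case one can split along an integral homology sphere). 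As written, though, the proposal is not a proof.
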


\begin{proof}
Give $X$ a handlebody decomposition as in Theorem \ref{structure} (and we use the notation of Theorem \ref{structure} as well).
Then the cores of the 2-handles $H_{r+s+1}, \dots, H_m$ represent a free basis for $H_2(X) = \mathbb{Z}^{m-r-s}$.
If $X$ is definite, then the theorem is trivial with one of the $X_i$ empty. Otherwise $X$ is indefinite and by the classification of indefinite unimodular forms we have that
$Q_X$ is isomorphic to either $aE_8 \oplus bH$ or $a\langle 1 \rangle \oplus b\langle -1 \rangle$ for some $a$ and $b$, depending on whether $X$ is spin or non-spin. In the non-spin case we slide
and reorder $H_{r+s+1}, \cdots, H_m$
so that the first $a$ handles represent $a\langle 1 \rangle$ and the rest represent $b\langle -1 \rangle$ (where $a+b = m-r-s$). Then we define $X_1$ to be $X^{(1)} \cup H_1 \cup \cdots \cup H_r \cup H_{r+s+1} \cup \cdots \cup H_{r+s+a}$,
and let $X_2$ denote the remaining handles. It follows that the $X_i$ are simply connected and definite as required.

In the spin case we make a similar argument. We have $Q_X = aE_8 \oplus bH$, and we can assume $a$ is nonnegative if we allow reversing the orientation of $X$.
Next we use the fact that $bH$ can be represented by $A=\left(\begin{array}{cc}Q & I \\ I & O\end{array}\right)$, where all submatrices are $b\times b$, and 
$Q$ is negative definite (as we described earlier, this follows since doubling the linear plumbing of $b$ $-2$-spheres results in $\#_b S^2 \times S^2$).
Then we slide and reorder our 2-handles, so that $H_{r+s+1}, \dots, H_{r+s+8a +b}$ represent the first $8a+b$ elements in $aE_8 \oplus A$. 
Then $X_1 := X^{(1)} \cup H_1 \cup \cdots \cup H_r \cup H_{r+s+1} \cup \cdots \cup H_{r+s+8a+b}$ is simply connected and negative definite,
and its complement $X_2$ consisting of the remaining handles will be simply connected and positive definite.

In the non-spin case we have that $\det(X_i)=\pm1$, and so we are actually splitting along an integral homology sphere. In the spin case we still have that $\det(X_i) \neq 0$, and so $\partial X_i$ is a rational homology sphere as required.
\end{proof}

\end{section}
%%%%%%%%%%%%%%%%%%%%%%%%%%%%%%%%%%%%%%%%%%%%%%%%%%%%%%%%
%%%%%%%%%%%%%%%%%%%%%%%%%%%%%%%%%%%%%%%%%%%%%%%%%%%%%%%%
%%%%%%%%%%%%%%%%%%%%%%%%%%%%%%%%%%%%%%%%%%%%%%%%%%%%%%%%

\bibliographystyle{amsalpha}
\bibliography{embedding.bib}

\end{document}